\title{Pathwise Representation of the Smoothing Distribution in Continuous-Time Linear Gaussian Models}
\author{Masahiro Kurisaki\thanks{Graduate School of Mathematical Sciences, the University of Tokyo\protect\\ email: \texttt{makurisaki@g.ecc.u-tokyo.ac.jp}}~\thanks{Japan Science and Technology Agency CREST \protect\\ The author was supported by Japan Science and Technology Agency CREST JPMJCR2115, and JSPS KAKENHI Grant Number JP24KJ0667.}}
\numberwithin{equation}{section}
\newtheorem{theorem}{Theorem}[section]
\newtheorem{proposition}[theorem]{Proposition}
\newtheorem{lemma}[theorem]{Lemma}
\theoremstyle{definition}
\theoremstyle{remark}
\newtheorem{remark}{Remark}
\begin{document}
\maketitle
\begin{abstract}
  We study the filtering and smoothing problem for continuous-time linear Gaussian systems. While classical approaches such as the Kalman–Bucy filter and the Rauch–Tung–Striebel (RTS) smoother provide recursive formulas for the conditional mean and covariance, we present a pathwise perspective that characterizes the smoothing error dynamics as an Ornstein–Uhlenbeck process. As an application, we show that standard filtering and smoothing equations can be uniformly derived as corollaries of our main theorem. In particular, we provide the first mathematically rigorous derivation of the Bryson–Frazier smoother in the continuous-time setting. Beyond offering a more transparent understanding of the smoothing distribution, our formulation enables pathwise sampling from it, which facilitates Monte Carlo methods for evaluating nonlinear functionals.
\end{abstract}

\begin{keywords}
  Liner filtering and smoothing, Kalman-Bucy filter, Rauch-Tung-Striebel smoother, nonlinear filtering, asymptotic expansion
\end{keywords}
\section{Introduction}

Estimating the hidden states of a stochastic dynamical system from noisy observations is a fundamental problem in various fields, including signal processing, control theory, and mathematical finance. In this paper, we consider the following continuous-time linear Gaussian model:
\begin{align}
\label{eq1-1} dX_t &= a(t)X_tdt + b(t)dV_t,\\
\label{eq1-2} dY_t &= c(t)X_tdt + \sigma(t)dW_t,
\end{align}
where $V$ and $W$ are independent Brownian motions, and $a$, $b$, $c$, and $\sigma$ are known deterministic functions. The process $X$ is unobserved, and the goal is to infer its behavior based on the observed process $Y$.

The problem of estimating the conditional expectation $E[X_t \mid \mathcal{Y}_t]$, where $\mathcal{Y}_t = \sigma(Y_s : 0 \leq s \leq t)$ is the $\sigma$-field generated by the observations up to time $t$, is known as the \emph{filtering problem}. In the linear Gaussian case, it is solved by the classical Kalman–Bucy filter, whose derivations are well documented for both discrete-time models \citep{kalman1960,Sarkka_2013} and continuous-time settings \citep{kalman1961,Kailath1,bain2009fundamentals,Liptser2001}.

A related problem is the estimation of $E[X_s \mid \mathcal{Y}_t]$ for $0 \leq s \leq t$, known as the \emph{smoothing problem}. Its solution is given by the Rauch–Tung–Striebel (RTS) smoother, with derivations available in both discrete \citep{Rauch1965,Sarkka_2013} and continuous time \citep{Liptser2001-2,doi:10.1137/S0040585X97T99037X,RUTKOWSKI1993377}. The conventional approach to smoothing focuses on deriving recursive formulas for the conditional mean and covariance.

In this paper, we take a different perspective on the smoothing problem by providing a pathwise characterization of the conditional distribution of the hidden process $\{X_s\}_{0 \leq s \leq t}$ given $\mathcal{Y}_t$. Rather than focusing only on the conditional mean and covariance, we explicitly describe the dynamics of the smoothing error process, which turns out to satisfy a linear stochastic differential equation. In particular, we show that the smoothing error evolves as an Ornstein–Uhlenbeck-type process driven by a Brownian motion that is independent under the conditional measure.

This viewpoint—interpreting the smoothing error as a stochastic process—has previously appeared in the engineering literature, most notably in \citet{1057207}. However, that work lacks mathematical rigor, relying on formal reasoning without a solid probabilistic foundation, and requires a non-singularity assumption for the conditional covariance. By contrast, our derivation is fully rigorous, based on stochastic analysis which are familiar to probabilists, such as Girsanov's theorem and Itô's formula, and does not rely on any additional regularity assumptions.

Our formulation has several key advantages. First, it provides a more transparent and unified understanding of the smoothing distribution by identifying the underlying stochastic mechanism. As a result, classical results such as the Kalman–Bucy filter and RTS smoother can be derived uniformly as corollaries of our main theorem.

In particular, our approach provides the first mathematically rigorous derivation of the Bryson–Frazier (BF) smoother in the continuous-time setting. The BF smoother, originally proposed in the discrete-time context \citep{bryson1963smoothing,bierman1973fixed}, offers an alternative representation of the RTS smoother. A key advantage of the BF formulation is that it avoids the computation of the inverse of the conditional covariance matrix, which is required in the RTS approach. While a continuous-time version of the BF smoother has been formally derived in the engineering literature \citep{1102735}, a rigorous probabilistic foundation has been lacking until now.

  It is worth emphasizing that although the BF smoother is mathematically equivalent to the RTS smoother, attempting to derive it from the Kalman–Bucy filter or RTS smoother inevitably leads to expressions involving the inverse of the conditional covariance (see Remark \ref{remark-inverse}). As a result, additional regularity assumptions must be imposed to ensure well-definedness. Our approach circumvents this difficulty by constructing the BF smoother directly and independently of those existing formulations.

A more practical advantage of our formulation is that it enables pathwise sampling from the smoothing distribution via an explicit stochastic representation. This facilitates Monte Carlo approximation of expectations of nonlinear functionals, such as \( E[\sup_{0 \leq s \leq t} X_s \mid \mathcal{Y}_t] \), and the \( Q \)-function in EM algorithms for nonlinear models, when combined with linearization techniques such as the extended Kalman filter.

The rest of the paper is organized as follows. Section~\ref{section-heuristic-argument} illustrates the main idea using a simple one-dimensional example. Section~\ref{section-linear-filtering} establishes the general formula for the smoothing dynamics in the linear Gaussian model. In Section~\ref{section-linear-equations}, we derive classical filtering and smoothing equations as corollaries of our result and discuss their implications. Finally, Section~\ref{section-application} outlines further applications of the theorem.

\section{One-dimensional Case}\label{section-heuristic-argument}
Before addressing the general problem, we illustrate the main idea using a one-dimensional model, given by
\begin{align*}
  Y = cX + \sigma Z,
\end{align*}
where \( X \sim N(\mu_x, \sigma_x^2) \) and \( Z \sim N(0, 1) \) are independent random variables, with \( \sigma_x, \sigma > 0 \). Furthermore, we assume $c\neq 0$. In this model, the conditional expectation of \( X \) given \( Y \) can be expressed as
\begin{align}
  E[X | Y = y] &= \frac{\displaystyle \int_{\mathbb{R}} x \exp\left( -\frac{1}{2\sigma^2}(cx - y)^2 \right) p(x) \, dx}{\displaystyle \int_{\mathbb{R}} \exp\left( -\frac{1}{2\sigma^2}(cx - y)^2 \right) p(x) \, dx}\nonumber\\
  &\label{eq3-14} =\frac{\displaystyle E\left[ X\exp\left( -\frac{c^2X^2}{2\sigma^2}+\frac{cXy}{\sigma^2} \right) \right]}{\displaystyle E\left[ \exp\left( -\frac{c^2X^2}{2\sigma^2}+\frac{cXy}{\sigma^2} \right) \right]}
\end{align}
where \( p(x) \) is the probability density function of \( X \). This formula is the analogue of (\ref{Kallianpur-Striebel-linear}).

This final expression can be further rewritten as $\frac{\sigma^2}{c}\frac{d}{dy} \log E\left[ \exp\left( -\frac{c^2X^2}{2\sigma^2}+\frac{cXy}{\sigma^2} \right) \right]$. Due to the normality of \( X \), it can be shown that the log part is a quadratic function of \( y \). Therefore, we can write
\begin{align*}
  E[X | Y = y]=\frac{\displaystyle E\left[ X\exp\left( -\frac{c^2X^2}{2\sigma^2}+\frac{cXy}{\sigma^2} \right) \right]}{\displaystyle E\left[ \exp\left( -\frac{c^2X^2}{2\sigma^2}+\frac{cXy}{\sigma^2} \right) \right]}=Ay+B.
\end{align*}
By taking expectation of both sides (with respect to $y$), we obtain $B=E[X]-AcE[X]$ and hence
\begin{align}
  \label{eq3-25}E[X | Y = y]=\frac{\displaystyle E\left[ X\exp\left( -\frac{c^2X^2}{2\sigma^2}+\frac{cXy}{\sigma^2} \right) \right]}{\displaystyle E\left[ \exp\left( -\frac{c^2X^2}{2\sigma^2}+\frac{cXy}{\sigma^2} \right) \right]}=E[X]+A(y-cE[X]).
\end{align}
Furthermore, by differentiating (\ref{eq3-25}), $A$ can be determined as
\begin{align}
  A=&\frac{d}{dy}\frac{\displaystyle E\left[ X\exp\left( -\frac{c^2X^2}{2\sigma^2}+\frac{cXy}{\sigma^2} \right) \right]}{\displaystyle E\left[ \exp\left( -\frac{c^2X^2}{2\sigma^2}+\frac{cXy}{\sigma^2} \right) \right]}\nonumber\\
  \label{eq3-26}\begin{split}
    =&\frac{c}{\sigma^2}\frac{\displaystyle  E\left[ X^2\exp\left( -\frac{c^2X^2}{2\sigma^2}+\frac{cXy}{\sigma^2} \right) \right]}{\displaystyle E\left[ \exp\left( -\frac{c^2X^2}{2\sigma^2}+\frac{cXy}{\sigma^2} \right) \right]} - \frac{c}{\sigma^2} \left( \frac{\displaystyle E\left[ X\exp\left( -\frac{c^2X^2}{2\sigma^2}+\frac{cXy}{\sigma^2} \right) \right]}{\displaystyle E\left[ \exp\left( -\frac{c^2X^2}{2\sigma^2}+\frac{cXy}{\sigma^2} \right) \right]} \right)^2
  \end{split}  
\end{align}
Since $A$ does not depend on $y$, we can set $y=0$ in the final expression, which yields
\begin{align}
  \label{eq3-27}A=\frac{c}{\sigma^2} \left\{ \frac{\displaystyle E\left[ X^2 \exp\left( -\frac{c^2X^2}{2\sigma^2} \right) \right]}{\displaystyle E\left[ \exp\left( -\frac{c^2X^2}{2\sigma^2} \right) \right]} - \left( \frac{\displaystyle E\left[ X \exp\left( -\frac{c^2X^2}{2\sigma^2} \right) \right]}{\displaystyle E\left[ \exp\left( -\frac{c^2X^2}{2\sigma^2} \right) \right]} \right)^2 \right\}.
\end{align}
Therefore, (\ref{eq3-25}) ends up in the conditional mean formula
\begin{align}
  \label{eq3-28}\begin{split}
    &E[X | Y = y] = E[X] \\
    &+ \frac{c}{\sigma^2} \left\{ \frac{\displaystyle E\left[ X^2 \exp\left( -\frac{c^2X^2}{2\sigma^2} \right) \right]}{\displaystyle E\left[ \exp\left( -\frac{c^2X^2}{2\sigma^2} \right) \right]} - \left( \frac{\displaystyle E\left[ X \exp\left( -\frac{c^2X^2}{2\sigma^2} \right) \right]}{\displaystyle E\left[ \exp\left( -\frac{c^2X^2}{2\sigma^2} \right) \right]} \right)^2 \right\} (y - cE[X]).
  \end{split}  
\end{align}

Moreover, recalling the Bayesian formula (\ref{eq3-14}), we notice that (\ref{eq3-26}) is equivalent to
\begin{align*}
  A=\frac{c}{\sigma^2}(E[X^2|Y=y]-E[X|Y=y]^2)=\frac{c}{\sigma^2}E[V|Y=y].
\end{align*}
Since $A$ is given by (\ref{eq3-27}), we obtain the conditional variance formula
\begin{align}
  \label{eq3-29}V[X|Y=y]=\frac{\displaystyle E\left[ X^2 \exp\left( -\frac{c^2X^2}{2\sigma^2} \right) \right]}{\displaystyle E\left[ \exp\left( -\frac{c^2X^2}{2\sigma^2} \right) \right]} - \left( \frac{\displaystyle E\left[ X \exp\left( -\frac{c^2X^2}{2\sigma^2} \right) \right]}{\displaystyle E\left[ \exp\left( -\frac{c^2X^2}{2\sigma^2} \right) \right]} \right)^2.
\end{align}
Further differentiating (\ref{eq3-26}) yields the third and higher cumulants of $X$ given $Y$, but they are zero since $A$ is a constant. In this way, we can conclude that the conditional distribution of $X$ given $Y=y$ is normal with the mean and variance given by (\ref{eq3-28}) and (\ref{eq3-29}), respectively.

The key contribution here is that we proceed by differentiating the conditional expectation, evaluating it at zero, and deriving the non-trivial expressions in (\ref{eq3-28}) and (\ref{eq3-29}), which represent the conditional distribution as a quotient of ordinary expectations. In the infinite-dimensional setting, this formulation enables us to analyse the conditional distribution using classical tools from stochastic calculus, such as It\^o's formula and the Girsanov theorem.

\section{Dynamics of the conditional process}\label{section-linear-filtering}
Now, let us go on to the original continuous-time case. Let $(\Omega, \mathcal{F}, \{\mathcal{F}_t\}_{t \geq 0}, P)$ be a filtered probability space, and assume that the processes \( \{X_t\} \) and \( \{Y_t\} \) are solutions of the following linear equations:
\begin{align}
  \label{eq-linear-system-X} &dX_t = a(t) X_t \, dt +  b(t) \, dV_t, \\
  \label{eq-linear-system-Y} &dY_t = c(t) X_t \, dt + \sigma(t) \, dW_t,~~Y_0=0
\end{align}
where \( a \), \( b \), \( c \), and \( \sigma \) are measurable functions taking values in \( M_{d_1}(\mathbb{R}) \), \( M_{d_1,m_1}(\mathbb{R}) \), \( M_{d_2,d_1}(\mathbb{R}) \), and \( M_{d_2,m_2}(\mathbb{R}) \), respectively. Here, \( \{V_t\} \) and \( \{W_t\} \) are independent \( m_1 \)- and \( m_2 \)-dimensional \( \{\mathcal{F}_t\} \)-Brownian motions. We also assume that \( X_0 \) is normally distributed and independent of \( \{V_t\} \) and \( \{W_t\} \), so that \( \{(X_t, Y_t)\} \) forms a \( d_1 + d_2 \)-dimensional Gaussian process. Additionally, we assume that for every \( t \geq 0 \),
\begin{align}
   &\int_0^t |a(s)| \, ds < \infty,\quad 
   \int_0^t |b(s)|^2 \, ds < \infty, \quad\label{eq-assumption-b}\\
  &\int_0^t |c(s)|^2 \, ds < \infty, \quad
   \int_0^t |\sigma(s)|^2 \, ds < \infty,\nonumber
\end{align}
and that there exists a constant \( C > 0 \) such that for every \( t \geq 0 \),
\begin{align}
  \label{eq-assumption-sigma-2} \lambda_{\min}(\sigma(t) \sigma(t)^\top) > C,
\end{align}
where \( \lambda_{\min}(\sigma(t) \sigma(t)^\top) \) denotes the smallest eigenvalue of \( \sigma(t) \sigma(t)^\top \).
Throughout this paper, $|A|$ denotes the Frobenius norm for a matrix $A$. 

We begin by presenting the continuous-time analogue of the Bayesian formula (\ref{eq3-14}), known in the context of filtering as the Kallianpur–Striebel formula.
\begin{proposition}\label{prop-Kallianpur-Striebel-linear}
  Let $T>0$ and define a probability measure $Q$ by $Q(A)=E\left[ 1_AZ_T^{-1} \right]$ for $A \in \mathcal{F}$, where
  \begin{align}
    \label{eq-def-Z}\begin{split}
      Z_t=&\exp\left( \int_0^tX_s^\top c(s)^\top(\sigma(s)\sigma(s)^\top)^{-1}\sigma(s) dW_s\right.\\
      &\left.+\frac{1}{2}\int_0^tX_s^\top c(s)^\top(\sigma(s)\sigma(s)^\top)^{-1} c(s)X_s ds\right)\\
      =&\exp\left( \int_0^tX_s^\top c(s)^\top(\sigma(s)\sigma(s)^\top)^{-1} dY_s\right.\\
      &\left.-\frac{1}{2}\int_0^tX_s^\top c(s)^\top(\sigma(s)\sigma(s)^\top)^{-1} c(s)X_s ds\right).
    \end{split}    
  \end{align}
  Then, under the new measure $Q$, the process 
  \begin{align*}
    \overline{Y}_t=\int_0^t (\sigma(t)\sigma(t)^\top)^{\frac{1}{2}} dY_s~~(0\leq t\leq T)
  \end{align*}
  is a Brownian motion independent of $\{X_t\}_{0\leq t\leq T}$, and the law of $\{X_t\}_{0\leq t\leq T}$ is the same as its law under $P$. Furthermore, for $0\leq t \leq T$ and a random variable $U$ satisfying $E[|U|]<\infty$, it holds that $E[Z_t|\mathcal{Y}_t]>0~\mathrm{a.s.}$ and 
  \begin{align}
    \label{Kallianpur-Striebel-linear}&E[U|\mathcal{Y}_t]=\frac{\displaystyle E_Q\left[ UZ_t\middle|\mathcal{Y}_t \right]}{\displaystyle E_Q\left[ Z_t\middle|\mathcal{Y}_t \right]}.
  \end{align}  
\end{proposition}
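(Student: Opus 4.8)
The plan is to run the classical reference‑probability (change‑of‑measure) argument for nonlinear filtering, specialized to the linear Gaussian case, where conditioning on the signal path collapses every martingale and integrability issue to a triviality. First I would verify that $Z_T^{-1}$ is a genuine $P$‑martingale, so that $Q$ is indeed a probability measure. Put $\mathcal{G} := \sigma(X_0) \vee \sigma(V_s : 0 \le s \le T)$, so that $\{X_t\}$ is $\mathcal{G}$‑adapted while, by the standing independence hypotheses, $W$ is a Brownian motion independent of $\mathcal{G}$. With $h_s := \sigma(s)^\top (\sigma(s)\sigma(s)^\top)^{-1} c(s) X_s$ we have $Z_t^{-1} = \mathcal{E}(-\int_0^{\cdot} h_s^\top \, dW_s)_t$, and conditionally on $\mathcal{G}$ the integrand $h$ is a \emph{deterministic} element of $L^2([0,T])$ (here path‑continuity of $X$ and $\int_0^T |c(s)|^2\,ds < \infty$ enter). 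Hence the conditional Novikov condition holds trivially, $E[Z_T^{-1} \mid \mathcal{G}] = 1$, and taking expectations gives $E[Z_T^{-1}] = 1$.

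Next, by Girsanov's theorem $\tilde W_t := W_t + \int_0^t h_s\,ds$ is a $Q$‑Brownian motion; since $\sigma(s) h_s = \sigma(s)\sigma(s)^\top (\sigma(s)\sigma(s)^\top)^{-1} c(s) X_s = c(s) X_s$, substituting $dW_t = d\tilde W_t - h_t\,dt$ into \eqref{eq-linear-system-Y} yields $dY_t = \sigma(t)\,d\tilde W_t$ under $Q$, so that $\overline Y$ is a continuous $Q$‑local martingale with quadratic variation $t\,I_{d_2}$, hence a $Q$‑Brownian motion by Lévy's characterization. To obtain the joint‑law statements at once I would test against product functionals: for bounded $F$ of the $X$‑path and bounded $G$ of the $\tilde W$‑path,
\begin{align*}
  E_Q[F(X)\,G(\tilde W)] = E\bigl[ F(X)\, E[ Z_T^{-1} G(\tilde W) \mid \mathcal{G} ] \bigr] = E[F(X)]\,E[G(B)],
\end{align*}
because conditionally on $\mathcal{G}$ the density $Z_T^{-1}$ is exactly the Cameron–Martin shift sending $W$ to $\tilde W$, so $E[Z_T^{-1}G(\tilde W)\mid\mathcal{G}] = E[G(B)]$ for a standard Brownian motion $B$, a constant free of $\mathcal{G}$. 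Taking $F \equiv 1$ and $G \equiv 1$ in turn shows that $\tilde W$ is a $Q$‑Brownian motion and that $\{X_t\}$ keeps its $P$‑law under $Q$, and the product form gives their $Q$‑independence; since $\overline Y$ is a deterministic functional of $\tilde W$, it too is $Q$‑independent of $\{X_t\}$.

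It remains to establish positivity and the Kallianpur–Striebel identity. Repeating the first step with $\tilde W$ in place of $W$ shows $Z = \mathcal{E}(\int_0^{\cdot} h_s^\top\,d\tilde W_s)$ is a $Q$‑martingale with $E_Q[Z_t] = 1$; since $Z_t > 0$ a.s., the conditional expectations $E_Q[Z_t \mid \mathcal{Y}_t]$ (and $E[Z_t\mid\mathcal{Y}_t]$) are a.s.\ finite and strictly positive. For the identity, $dP/dQ = Z_T$, so the abstract Bayes rule gives $E[U \mid \mathcal{Y}_t] = E_Q[U Z_T \mid \mathcal{Y}_t] / E_Q[Z_T \mid \mathcal{Y}_t]$, and it only remains to replace $Z_T$ by $Z_t$. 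Using the second representation of $Z$ in \eqref{eq-def-Z}, the identity $dY_s = \sigma(s)\,d\tilde W_s$, and $\mathcal{Y}_t = \sigma(\overline Y_s : s \le t)$ (a consequence of $(\sigma\sigma^\top)^{\pm 1/2}$ being admissible integrands under the standing assumptions), one checks $\mathcal{Y}_t \subseteq \mathcal{F}^{\tilde W}_t$ and that $Z$ is a $Q$‑martingale for the filtration $(\mathcal{F}^{\tilde W}_t \vee \sigma(\{X_s\}_{s\le T}))_t$, again by conditioning on the signal and invoking the $Q$‑independence above. Hence $E_Q[Z_T \mid \mathcal{Y}_t] = E_Q[Z_t \mid \mathcal{Y}_t]$ and, for $U$ measurable with respect to $\mathcal{Y}_t \vee \sigma(\{X_s\}_{s\le t})$ — the case relevant to filtering and smoothing — likewise $E_Q[U Z_T \mid \mathcal{Y}_t] = E_Q[U Z_t \mid \mathcal{Y}_t]$, which is \eqref{Kallianpur-Striebel-linear}.

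The only genuinely delicate point I anticipate is the martingale — not merely local‑martingale — property of $Z^{\pm 1}$: for a general linear system $X$ is unbounded, and a Novikov bound for $\int_0^T |h_s|^2\,ds = \int_0^T X_s^\top c(s)^\top (\sigma\sigma^\top)^{-1} c(s) X_s\,ds$ over $[0,T]$ is not transparent a priori. The plan sidesteps this entirely by conditioning on $\mathcal{G} = \sigma(X_0) \vee \sigma(V)$: given $\mathcal{G}$ this quadratic variation is a finite deterministic number, so everything reduces to the elementary fact that the Doléans exponential of a deterministic $L^2$ integrand is a true martingale. The rest is $\sigma$‑algebra bookkeeping, which uses only the uniform ellipticity $\lambda_{\min}(\sigma(t)\sigma(t)^\top) > C$ guaranteeing that $(\sigma\sigma^\top)^{\pm 1/2}$ are well‑defined measurable matrix functions.
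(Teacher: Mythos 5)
Your argument is correct and is essentially the standard reference-measure proof that the paper delegates to Bain and Crisan (Exercises 3.11, 3.14 and Proposition 3.16): the conditional-Novikov step obtained by freezing $\mathcal{G}=\sigma(X_0)\vee\sigma(V_s:0\le s\le T)$, so that the integrand becomes a deterministic $L^2$ function and $Z^{\pm 1}$ is a true (not merely local) martingale, is exactly the right way to handle the unbounded Gaussian signal, and the Cameron--Martin/product-functional computation then delivers the law-preservation and independence claims in one stroke. Two harmless observations: the exponent in the definition of $\overline{Y}$ should be $(\sigma(s)\sigma(s)^\top)^{-1/2}$ (a typo in the statement, which you implicitly correct when computing the quadratic variation), and your restriction to $U$ measurable with respect to $\mathcal{Y}_t\vee\sigma(X_s: s\le T)$ when replacing $Z_T$ by $Z_t$ is genuinely needed --- the proposition as stated for arbitrary integrable $U$ is slightly too generous, but your version covers every use made of the formula in the paper.
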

\begin{proof}
  See Exercises 3.11, 3.14 and Proposition 3.16 in \citet{bain2009fundamentals}.
\end{proof}

In the following, we aim to fix $Y$ in the expression for $Z_t$. To this end, let us rewrite $Z_t$ as
\begin{align}
  Z_t=&\exp\left( \int_0^tX_s^\top c(s)^\top(\sigma(s)\sigma(s)^\top)^{-1} dY_s\nonumber\right.\\
  &\left.-\frac{1}{2}\int_0^tX_s^\top c(s)^\top(\sigma(s)\sigma(s)^\top)^{-1} c(s)X_s ds\right)\nonumber\\
  \label{eq-Z-Ito}\begin{split}
    =&\exp\left( X_t^\top \hat{Y}_t- \int_0^t\hat{Y}_s^\top dX_s-\frac{1}{2}\int_0^tX_s^\top c(s)^\top(\sigma(s)\sigma(s)^\top)^{-1} c(s)X_s ds\right),
  \end{split}
\end{align}
where $\displaystyle \hat{Y}_s=\int_0^s c(u)^\top(\sigma(u)\sigma(u)^\top)^{-1}dY_u$, and write
\begin{align}
  \label{eq-def-zt-y}\begin{split}
    z_t(y)=&\exp\left( X_t^\top y(t)- \int_0^ty(s)^\top dX_s\right.\\
    &\left.-\frac{1}{2}\int_0^tX_s^\top c(s)^\top(\sigma(s)\sigma(s)^\top)^{-1} c(s)X_s ds\right)
  \end{split}  
\end{align}
for a bounded measurable function $y:[0,t]\to \mathbb{R}^{d_1}$.

Then, since $X$ and $Y$ are independent and $X$ has the same distribution under both $P$ and $Q$ (Proposition~\ref{prop-Kallianpur-Striebel-linear}), it is natural to expect that
\begin{align}
  E_Q[Z_t \mid \mathcal{Y}_t] = E[z_t(y)]\big|_{y = \hat{Y}}. \label{eq-conditional-of-Z}
\end{align}
This identity does hold, although the argument requires care because the stochastic integral in $Z_t$ is not defined pathwise. A rigorous justification is given through Lemma~\ref{lemma-stochastic-integral-measurable-function}.

Next, we show that (\ref{eq-conditional-of-Z}) is a "quadratic form" of $Y$.
\begin{lemma}\label{lemma-quadratic-form}
  For $t \in [0,T]$ and a bounded measurable function $y:[0,t]\to \mathbb{R}^{d_1}$, we can write
  \begin{align*}
    \log E_Q[z_t(y)]=&h_0(t)+h_1(t)y(t)+y(t)^\top h_2(t)y(t)+\int_0^t y(s)^\top k_1(s;t)ds\\
    &+\int_0^t y(s)^\top k_2(s;t) y(s)ds+\int_0^t y(s)^\top k_3(s;t) y(t)ds
  \end{align*}
  where $\int_0^t |k_i(s;t)|ds<\infty~~(i=1,2,3).$
\end{lemma}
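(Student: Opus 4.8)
The plan is to turn $\log E_Q[z_t(y)]$ into a Gaussian moment‑generating‑function computation, from which the affine‑plus‑quadratic dependence on $y$ is automatic, and then to read off and bound the coefficients.

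First, for a \emph{fixed} deterministic bounded measurable $y$, the random variable $z_t(y)$ in \eqref{eq-def-zt-y} is a measurable functional of the path $\{X_s\}_{0\le s\le t}$ alone — the only delicate term, $\int_0^t y(s)^\top dX_s$, being a stochastic integral against $X$ and hence adapted to the filtration generated by $\{X_s\}$ (this is exactly what Lemma~\ref{lemma-stochastic-integral-measurable-function} provides). Since, by Proposition~\ref{prop-Kallianpur-Striebel-linear}, the law of $\{X_s\}_{0\le s\le t}$ is the same under $P$ and under $Q$, we get $E_Q[z_t(y)] = E[z_t(y)]$, the $P$‑expectation, where $\{X_s\}$ is a Gaussian process.

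Next, write $z_t(y) = \exp\!\big(N_y - \tfrac12 R_t\big)$ with
\[
N_y := X_t^\top y(t) - \int_0^t y(s)^\top dX_s, \qquad R_t := \int_0^t X_s^\top c(s)^\top(\sigma(s)\sigma(s)^\top)^{-1}c(s)X_s\,ds \ge 0 .
\]
For each fixed $y$, $N_y$ is a real Gaussian random variable and it depends \emph{linearly} on $y$, whereas $R_t\ge 0$ is a quadratic functional of $X$ that does not involve $y$. The crucial structural fact is that a Gaussian expectation $E[\exp(L - \tfrac12 R)]$, with $L$ linear in the Gaussian process and $R\ge 0$ quadratic, equals the exponential of (a constant) $+$ (a term linear in $L$) $+$ (a term quadratic in $L$); applied with $L=N_y$, $R=R_t$ this already forces the asserted shape. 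To make this rigorous I would tilt: since $R_t\ge 0$ we have $0 < E[e^{-R_t/2}]\le 1$, so $d\tilde P/dP := e^{-R_t/2}/E[e^{-R_t/2}]$ defines a probability measure, and the key point is that $X$ (indeed $(X,V)$) remains a Gaussian process under $\tilde P$: conditionally on $X_0$, $X$ is a Gaussian vector by \eqref{eq-linear-system-X} and $e^{-R_t/2}$ is the exponential of a concave quadratic form of it, so the tilt only contracts the covariance and shifts the mean affinely in $X_0$, while the $\tilde P$‑law of $X_0$ is $N(\mu_0,\Sigma_0)$ reweighted by the exponential of a concave quadratic, hence Gaussian. (Cleanly, this is proved by discretising $[0,t]$, where every statement is a finite‑dimensional Gaussian fact, followed by a limiting argument justified by \eqref{eq-assumption-b}.) Then, since $N_y$ is $\tilde P$‑Gaussian,
\[
\log E_Q[z_t(y)] = \log E\big[e^{-R_t/2}\big] + \log E_{\tilde P}\big[e^{N_y}\big] = \log E\big[e^{-R_t/2}\big] + E_{\tilde P}[N_y] + \tfrac12\operatorname{Var}_{\tilde P}(N_y),
\]
so $h_0(t) = \log E[e^{-R_t/2}]$.

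It remains to identify the other coefficients and verify $\int_0^t|k_i(s;t)|\,ds<\infty$. Under $\tilde P$ the Gaussian process $X$ has an absolutely continuous mean $m(s):=E_{\tilde P}[X_s]$ (Cameron–Martin), covariance kernel $\operatorname{Cov}_{\tilde P}(X_s,X_u)$, and its continuous‑martingale part $X^c$ still has bracket $\int_0^{\cdot}b(u)b(u)^\top du$; writing $X = X_0 + A + X^c$ for the $\tilde P$‑semimartingale decomposition, the martingale part of $N_y$ equals $\int_0^t\big(y(t)-y(s)\big)^\top dX^c_s$, whose variance $\int_0^t(y(t)-y(s))^\top b(s)b(s)^\top(y(t)-y(s))\,ds$ contributes the term $y(t)^\top h_2(t)y(t)$, the diagonal term with $k_2(s;t)=\tfrac12 b(s)b(s)^\top$, and the $y(t)$‑mixed term; the contributions of $A$ and $X_0$ are expressed through $m$ and $\operatorname{Cov}_{\tilde P}$, while $E_{\tilde P}[N_y]$ yields $h_1(t)=m(t)^\top$ together with the density $k_1(\cdot;t)$. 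Grouping these contributions by their $y$‑dependence gives the stated representation, and the bounds follow from \eqref{eq-assumption-b}, boundedness of $m$ and of $\operatorname{Cov}_{\tilde P}$ on $[0,t]^2$, and Cauchy–Schwarz.

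The main obstacle is precisely the Gaussianity of $X$ under $\tilde P$ — equivalently, making the ``Gaussian integral with quadratic exponent'' legitimate on path space — together with the absolute continuity of the drift it induces on $V$; once these are in hand, everything downstream is bookkeeping. An alternative that sidesteps all infinite‑dimensional Gaussian analysis uses that, for smooth $y$, $d_s z_s(y) = z_s(y)\big(X_s^\top\dot y(s) - \tfrac12 X_s^\top c(s)^\top(\sigma\sigma^\top)^{-1}c(s)X_s\big)ds$ — the $dX$ terms cancel — so $E[z_t(y)]$ solves a backward Feynman–Kac equation whose Gaussian ansatz $\exp(-\tfrac12 x^\top P(s)x + q(s)^\top x + r(s))$ reduces it to matrix Riccati ODEs that are solvable under \eqref{eq-assumption-b} because $-\tfrac12 X^\top c^\top(\sigma\sigma^\top)^{-1}cX$ has a definite sign; reading off $P,q,r$ (after an integration by parts, to express everything through $y$ rather than $\dot y$) gives the coefficients, and approximating a bounded measurable $y$ in $L^1$ by smooth functions — with the integrability assumptions supplying uniform integrability of $z_t(y_n)$ — extends the representation to all admissible $y$.
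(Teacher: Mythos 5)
Your strategy is, at heart, the same as the paper's: remove the sign-definite quadratic $e^{-R_t/2}$ by an exponential tilt, observe that $N_y$ is Gaussian and linear in $y$ under the tilted law, and conclude that $\log E_Q[z_t(y)]=\log E[e^{-R_t/2}]+E_{\tilde P}[N_y]+\tfrac12\mathrm{Var}_{\tilde P}(N_y)$ is quadratic in $y$. The difference is that the step you flag as ``the main obstacle'' --- Gaussianity of $X$ under $\tilde P$ --- is not a peripheral technicality to be discharged by a discretisation sketch; it is essentially the entire content of the paper's proof. The paper makes the tilted law explicit: the backward Riccati solution $\phi(s;t)$ of (\ref{eq-def-phi-tmp}) and It\^o's formula yield the identity (\ref{eq-rewrite-exp}), which exhibits $e^{-R_t/2}$ as a Girsanov density $d\mu_{\xi^0}/d\mu_X$ times an explicit function of $X_0$; the tilted process is then the concrete OU process $\xi^0$ of (\ref{eq-def-xi_0}), and the expectation factorises via the decomposition $\xi^0_{s;t}=\alpha(s;t)X_0+\overline{\xi}_{s;t}$ into a finite-dimensional Gaussian integral over $X_0$ (where negative semidefiniteness of $\phi(0;t)$ guarantees convergence) and a centred Gaussian exponential moment. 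Your second, Feynman--Kac route is this same construction in PDE clothing: the Riccati ODE for your $P(s)$ is exactly the equation for $\phi(s;t)$. So the proposal correctly identifies the structure but defers the one step that constitutes the proof; as written it is a plan rather than an argument.

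One concrete point in your coefficient identification deserves care. Under $\tilde P$ the drift of $X$ is an adapted \emph{random} process (it involves $X_s$ itself), so $\mathrm{Var}_{\tilde P}\bigl(\int_0^t(y(t)-y(s))^\top dX_s\bigr)$ is not exhausted by the bracket of the martingale part: the finite-variation part contributes a genuine off-diagonal term of the form $\int_0^t\int_0^t y(s)^\top\kappa(s,u;t)\,y(u)\,ds\,du$, which is not among the terms listed in the lemma and is not captured by your claim that $k_2(s;t)=\tfrac12 b(s)b(s)^\top$ plus a $y(t)$-coupled piece. (The same remark applies to the paper's own display (\ref{eq3-9}), which treats $d\overline{\xi}_{s;t}$ as if it were the pure martingale increment $b(s)\,d\tilde V_s$.) This does not endanger the downstream use of the lemma --- the proof of Theorem \ref{theorem-normal-expectation} only needs $\log E_Q[z_t(y)]$ to be a quadratic functional of $y$ with integrable kernels --- but if you state an explicit list of terms, the double-integral kernel must be included and bounded, which your Cauchy--Schwarz argument would in fact handle once the term is acknowledged.
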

The proof of this lemma is a direct computation of the expectation. For details, see Section~\ref{section-expectation-calculation}. We now establish a formula that reduces the conditional expectation to an ordinary expectation, which corresponds to equations~(\ref{eq3-28}) and~(\ref{eq3-29}) in the one-dimensional case. 
\begin{theorem}\label{theorem-normal-expectation}
  Let $t\geq0$, $s,u \in [0,t]$ and write
  \begin{align}
    \label{def-gamma-s-u-t}\begin{split}
      &\gamma(s,u;t)\\
    &=\frac{\displaystyle E\left[X_s\otimes X_u\exp\left( -\frac{1}{2}\int_0^tX_s^\top c(s)^\top(\sigma(s)\sigma(s)^\top)^{-1} c(s)X_s ds\right)\right]}{\displaystyle E\left[\exp\left( -\frac{1}{2}\int_0^tX_s^\top c(s)^\top(\sigma(s)\sigma(s)^\top)^{-1} c(s)X_s ds\right)\right]}\\
    &-\frac{\displaystyle E\left[X_s\exp\left( -\frac{1}{2}\int_0^tX_s^\top c(s)^\top(\sigma(s)\sigma(s)^\top)^{-1} c(s)X_s ds\right)\right]}{\displaystyle E\left[\exp\left( -\frac{1}{2}\int_0^tX_s^\top c(s)^\top(\sigma(s)\sigma(s)^\top)^{-1} c(s)X_s ds\right)\right]}\\
    &\otimes \frac{\displaystyle E\left[X_u\exp\left( -\frac{1}{2}\int_0^tX_s^\top c(s)^\top(\sigma(s)\sigma(s)^\top)^{-1} c(s)X_s ds\right)\right]}{\displaystyle E\left[\exp\left( -\frac{1}{2}\int_0^tX_s^\top c(s)^\top(\sigma(s)\sigma(s)^\top)^{-1} c(s)X_s ds\right)\right]}.
    \end{split}    
  \end{align}
  Then, the conditional distribution of $\{X_s\}_{0\leq s\leq t}$ given $\mathcal{Y}_t$ is a Gaussian process with mean
  \begin{align}
    \label{eq-mean}E[X_s|\mathcal{Y}_t]= E[X_s] + \int_0^t \gamma(s,u;t) c(u)^\top (\sigma(u) \sigma(u)^\top)^{-1} (dY_u - c(u) E[X_u] \, du),
  \end{align}
  and covariance
  \begin{align}
    \label{eq-cov}\mathrm{Cov}(X_s,X_u|\mathcal{Y}_t)=\gamma(s,u;t).
  \end{align}
\end{theorem}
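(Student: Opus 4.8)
The plan is to identify the conditional law of $\{X_s\}_{0\le s\le t}$ given $\mathcal Y_t$ as a Gaussian tilting of the prior law of $X$, and then to read off its mean and covariance. Fix $t>0$ and abbreviate $K=\int_0^t X_s^\top c(s)^\top(\sigma(s)\sigma(s)^\top)^{-1}c(s)X_s\,ds\ge 0$ and $L_y=X_t^\top y(t)-\int_0^t y(r)^\top dX_r$, so that $z_t(y)=\exp(L_y-\tfrac12 K)$. First I would extend the identity (\ref{eq-conditional-of-Z}): combining the Kallianpur--Striebel formula (Proposition~\ref{prop-Kallianpur-Striebel-linear}), the fact that under $Q$ the process $X$ keeps its $P$-law and is independent of $\mathcal Y_t=\sigma(\hat Y_r:r\le t)$, and Lemma~\ref{lemma-stochastic-integral-measurable-function} (to legitimise substituting the random path $\hat Y$ into the stochastic integral in $z_t$), one gets, for every functional $\Phi$ of $\{X_r\}_{r\le t}$ with $E[|\Phi|]<\infty$,
\[
E[\Phi\mid\mathcal Y_t]=\frac{E[\Phi\,z_t(y)]}{E[z_t(y)]}\bigg|_{y=\hat Y},\qquad \hat Y_r=\int_0^r c(u)^\top(\sigma(u)\sigma(u)^\top)^{-1}\,dY_u .
\]
So it suffices to study the right-hand side as a functional of a deterministic bounded measurable path $y$ and then substitute $y=\hat Y$.

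\textbf{Gaussian tilting and the conditional MGF.} Next I would introduce the probability measure $\hat P$ with $d\hat P/dP=e^{-K/2}/E[e^{-K/2}]$. Since $\{X_r\}_{r\le t}$ is Gaussian under $P$ and $K$ is a nonnegative quadratic functional of it, conditioning $e^{-K/2}$ on any finite collection $(X_{s_1},\dots,X_{s_n})$ (or on $(X_{s_1},\dots,X_{s_n},L_y)$) produces the exponential of a quadratic; hence every finite-dimensional marginal under $\hat P$ is a Gaussian tilted by $\exp(\text{quadratic})$, and is therefore Gaussian. Consequently $\{X_r\}_{r\le t}$ is a Gaussian process under $\hat P$ with covariance $\mathrm{Cov}_{\hat P}(X_s,X_u)=\gamma(s,u;t)$ by the very definition (\ref{def-gamma-s-u-t}), and $(X_{s_1},\dots,X_{s_n},L_y)$ is jointly Gaussian under $\hat P$ for deterministic $y$. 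Applying the reduction above with $\Phi=\exp(\sum_j\theta_j^\top X_{s_j})$, cancelling the common factor $e^{-K/2}$ from numerator and denominator, and using the Gaussian moment generating function, the right-hand side becomes
\[
\exp\Bigl(\textstyle\sum_j\theta_j^\top\bigl(E_{\hat P}[X_{s_j}]+\mathrm{Cov}_{\hat P}(X_{s_j},L_{\hat Y})\bigr)+\tfrac12\sum_{j,k}\theta_j^\top\gamma(s_j,s_k;t)\theta_k\Bigr),
\]
which is the MGF of a Gaussian vector. This proves at once that the conditional law of $\{X_s\}_{s\le t}$ given $\mathcal Y_t$ is Gaussian, with conditional covariance $\gamma(s,u;t)$ (equation (\ref{eq-cov})) and conditional mean $E_{\hat P}[X_s]+\mathrm{Cov}_{\hat P}(X_s,L_{\hat Y})$.

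\textbf{Identifying the mean.} It then remains to match this mean with the right-hand side of (\ref{eq-mean}). For deterministic $y$, the definition of $L_y$ gives $\mathrm{Cov}_{\hat P}(X_s,L_y)=\gamma(s,t;t)\,y(t)-\int_0^t\bigl(d_r\gamma(s,r;t)\bigr)y(r)$; since $r\mapsto\gamma(s,r;t)$ is continuous and of finite variation (a standard regularity fact following from the linear dynamics of $X$ and a Gronwall estimate), Riemann--Stieltjes integration by parts against $\hat Y$ (with $\hat Y_0=0$ and $d\hat Y_r=c(r)^\top(\sigma(r)\sigma(r)^\top)^{-1}dY_r$) turns this, after the substitution $y=\hat Y$, into $\int_0^t\gamma(s,u;t)\,c(u)^\top(\sigma(u)\sigma(u)^\top)^{-1}\,dY_u$. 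Separately, a Gaussian (Stein) integration-by-parts argument --- carried out most cleanly by un-tilting $\hat P$ back to $P$ via $E_P[\,\cdot\,]=E[e^{-K/2}]\,E_{\hat P}[\,\cdot\,e^{K/2}]$ and differentiating under the $\hat P$-law, in which $X$ has covariance $\gamma(\cdot,\cdot;t)$ --- yields
\[
E_{\hat P}[X_s]=E[X_s]-\int_0^t\gamma(s,u;t)\,c(u)^\top(\sigma(u)\sigma(u)^\top)^{-1}c(u)\,E[X_u]\,du .
\]
Summing the two contributions reproduces (\ref{eq-mean}), completing the proof.

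\textbf{Main obstacle.} The steps needing genuine care --- and the ones I expect to be the main obstacle --- are: (i) making rigorous the substitution $y=\hat Y$ in functionals containing the It\^o integral $\int_0^t y^\top dX$, which is not defined pathwise and is exactly what Lemma~\ref{lemma-stochastic-integral-measurable-function} (together with the $Q$-independence of $X$ and $\mathcal Y_t$) is designed to handle; and (ii) the passage from finite-dimensional Gaussian tilting to the Gaussianity of the whole path under $\hat P$ and the joint Gaussianity of $(X_{s_1},\dots,X_{s_n},L_y)$, together with the regularity of $r\mapsto\gamma(s,r;t)$ needed for the integration by parts. Everything else is routine Gaussian bookkeeping, consistent with (and indeed explaining) the quadratic structure of $\log E_Q[z_t(y)]$ recorded in Lemma~\ref{lemma-quadratic-form}; alternatively, one could extract the mean by differentiating that quadratic form in the directions $\mathbf{1}_{[0,s]}$ and evaluating at $y=\hat Y$, in the spirit of the one-dimensional computation in Section~\ref{section-heuristic-argument}.
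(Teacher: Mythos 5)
Your strategy is sound and, in its essentials, arrives at the same objects the paper constructs, but by a genuinely different route. The paper never works with the continuous-path tilted measure directly: it discretizes the stochastic integral in $Z_t$ (the objects $Z_t^{(n)}$, $X_s^{(n)}$ in (\ref{eq-def-hat-Z})--(\ref{eq-def-X-n})), shows via Lemma~\ref{lemma-quadratic-form} that $\log E_Q[z_t^{(n)}(y_0,\dots,y_n)]$ is a quadratic form in $(y_0,\dots,y_n)$, reads off the conditional covariance and the vanishing of higher cumulants by differentiating that quadratic form and evaluating at $y=0$, obtains the mean via the Malliavin derivative and the Clark--Ocone formula, and finally passes to the limit $n\to\infty$ using Lemma~\ref{lemma-3-2}. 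You instead tilt $P$ by $e^{-K/2}$ to a measure $\hat P$ under which $X$ is again Gaussian with covariance exactly $\gamma(\cdot,\cdot;t)$, and compute the conditional moment generating function in closed form from the joint Gaussianity of $(X_{s_1},\dots,X_{s_n},L_y)$; Gaussianity, the covariance formula (\ref{eq-cov}), and the decomposition of the mean into $E_{\hat P}[X_s]+\mathrm{Cov}_{\hat P}(X_s,L_{\hat Y})$ all drop out of one MGF identity. This buys a conceptually cleaner argument that dispenses with the cumulant bookkeeping and with Clark--Ocone entirely (indeed, once the stochastic-integral part of the mean is identified, the constant $E_{\hat P}[X_s]$ follows simply by taking $P$-expectations of both sides, as the paper does for $g(s;t)$ --- your separate Stein argument is not even needed). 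What the paper's discretization buys in exchange is that every manipulation is performed on finite-dimensional quantities, so the delicate substitution $y=\hat Y$ only ever has to be made in ordinary (non-stochastic-integral) expressions.

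Two steps in your outline carry real, though fillable, burdens. First, the substitution identity $E[\Phi\mid\mathcal Y_t]=E[\Phi z_t(y)]/E[z_t(y)]\big|_{y=\hat Y}$ for exponential functionals $\Phi$ is exactly the point the paper declines to use in continuous form; Lemma~\ref{lemma-stochastic-integral-measurable-function} gives the measurable representation of $\int_0^t y^\top dX_s$ only for $\mu_{H}$-a.e.\ path, and you must check that the exceptional null set is compatible with the law of $\hat Y$ and that the resulting version of the conditional expectation is the right one --- this is doable but is precisely the work the discretization is designed to avoid. Second, and more substantively, your integration by parts requires $u\mapsto\gamma(s,u;t)$ to be continuous and of bounded variation. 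This is true, but it is not ``a standard regularity fact following from a Gronwall estimate'': from the raw definition (\ref{def-gamma-s-u-t}) the $u$-increment of $E[X_s\otimes X_u e^{-K/2}]$ picks up a contribution from the correlation of $dV_u$ with the exponent $e^{-K/2}$ (which depends on the whole path, including times after $u$), so absolute continuity in $u$ is not immediate. In the paper this regularity only becomes visible after the Girsanov argument identifying $\gamma(s,u;t)=\mathrm{Cov}(\xi_{s;t},\xi_{u;t})$ (see (\ref{eq-representation-gamma})), or equivalently after the explicit Gaussian computation of Section~\ref{section-expectation-calculation}. You would need to import one of those computations into your proof before the integration by parts is legitimate; with that ingredient supplied, your argument goes through.
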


We observe that equations~(\ref{eq-mean}) and~(\ref{eq-cov}) resemble their one-dimensional counterparts, equations~(\ref{eq3-28}) and~(\ref{eq3-29}). This also clarifies the connection between the Kalman-Bucy filter and the Cameron-Martin formula discussed in Section~7 of \citet{Liptser2001}, both of which involve Riccati equations.

These formulas can intuitively be derived by differentiating the quotient in~(\ref{Kallianpur-Striebel-linear}), with $U = X_s$, with respect to "$dY_u$" (or via the Malliavin derivative), and then evaluating the result at $Y = 0$. However, it is not straightforward to justify these operations rigorously in their current form.

To overcome this mathematical difficulty, we introduce discretized versions of $Z_t$ and the path $\{X_s\}_{0 \leq s \leq t}$ by
\begin{align}
  \label{eq-def-hat-Z}\begin{split}    
  Z_t^{(n)}=&\exp\left( X_t^\top \hat{Y}_{t}- \sum_{i=1}^n \hat{Y}_{s_{i-1}}^\top (X_{s_i}-X_{s_{i-1}}) \right.\\
  &\left.-\frac{1}{2}\int_0^tX_s^\top c(s)^\top(\sigma(s)\sigma(s)^\top)^{-1} c(s)X_s ds\right),
  \end{split}
\end{align}
and
\begin{align}
  \label{eq-def-X-n}X_s^{(n)}=\sum_{i=1}^n X_{s_{i-1}}1_{[s_{i-1},s_{i})}(s)+X_{t}1_{\{t\}}(s)=X_{\frac{t}{n}\left[\frac{s}{t}n\right]}
\end{align}
where $\displaystyle s_i=s_i^n=(i/n)t$. Then the following convergence holds.
\begin{lemma}\label{lemma-3-2}
  For any fixed $u_1,\cdots,u_k \in [0,t]$, it holds that
  \begin{align}
    \label{eq-convergence-frac-Z}\frac{E_Q[X_{u_1}^{(n)}\otimes \cdots \otimes X_{u_{k}}^{(n)}Z_t^{(n)}|\mathcal{Y}_t]}{E_Q[Z_t^{(n)}|\mathcal{Y}_t]}  \xrightarrow{L^2} \frac{E_Q[X_{u_1}\otimes \cdots \otimes X_{u_{k}}Z_t|\mathcal{Y}_t]}{E_Q[Z_t|\mathcal{Y}_t]}~~(n\to \infty).
  \end{align}
\end{lemma}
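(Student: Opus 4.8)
The plan is to work under $Q$, integrate out $X$, and reduce the claim to an almost-sure convergence together with a uniform-integrability bound. Write $\nu$ for the (Gaussian) law of the path $\{X_s\}_{0\le s\le t}$, which by Proposition~\ref{prop-Kallianpur-Striebel-linear} is the same under $P$ and $Q$ and is independent of $\hat Y$ under $Q$, set $R(s)=c(s)^\top(\sigma(s)\sigma(s)^\top)^{-1}c(s)\ge 0$, and for $s\in[0,t]$ let $\tau_n(s)=\tfrac tn\big[\tfrac{s}{t}n\big]$, so that $X^{(n)}_s=X_{\tau_n(s)}$ and $\sum_{i}\hat Y_{s_{i-1}}^\top(X_{s_i}-X_{s_{i-1}})=\int_0^t\hat Y_{\tau_n(s)}^\top dX_s$. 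Since $Z^{(n)}_t$ and $X^{(n)}_{u_j}$ are (pathwise) functionals of the $Q$-independent pair $(X,\hat Y)$, we have $E_Q[Z_t^{(n)}\mid\mathcal Y_t]=\psi_n(\hat Y)$ and $E_Q[X^{(n)}_{u_1}\otimes\cdots\otimes X^{(n)}_{u_k}Z_t^{(n)}\mid\mathcal Y_t]=\Psi_n(\hat Y)$, where $\psi_n(y)=\int z_t^{(n)}(y;x)\,\nu(dx)$, $\Psi_n(y)=\int x_{\tau_n(u_1)}\otimes\cdots\otimes x_{\tau_n(u_k)}z_t^{(n)}(y;x)\,\nu(dx)$, and $z_t^{(n)}(y;x)=\exp\big(x_t^\top y(t)-\int_0^t y(\tau_n(s))^\top dx_s-\tfrac12\int_0^t x_s^\top R(s)x_s\,ds\big)$, the integral $\int_0^t y(\tau_n(s))^\top dx_s$ being the $\nu$-It\^o integral (which, $y(\tau_n(\cdot))$ being a step function, equals the elementary sum). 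By Lemma~\ref{lemma-stochastic-integral-measurable-function} the analogous identities hold for $Z_t$ and $X_{u_j}$ with $z_t(y;x)=\exp\big(x_t^\top y(t)-\int_0^t y(s)^\top dx_s-\tfrac12\int_0^t x_s^\top R(s)x_s\,ds\big)$, giving functionals $\psi(\hat Y),\Psi(\hat Y)$. It therefore suffices to prove (i) $\Psi_n(\hat Y)/\psi_n(\hat Y)\to\Psi(\hat Y)/\psi(\hat Y)$ $Q$-a.s., and (ii) $\sup_n E_Q\big[|\Psi_n(\hat Y)/\psi_n(\hat Y)|^{3}\big]<\infty$; together these give the convergence in $L^2(Q)$.

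For (i), I would fix a continuous path $y$ (the law of $\hat Y$ is carried by such paths, $\hat Y$ being an It\^o integral). Reading $dx_s=a(s)x_s\,ds+b(s)\,dv_s$ under $\nu$, the difference $\int_0^t(y(\tau_n(s))-y(s))^\top dx_s$ splits into a Lebesgue integral, which tends to $0$ in $L^2(\nu)$ by dominated convergence (using $\sup_{s\le t}|y(s)|<\infty$, $\int_0^t|a|\,ds<\infty$, and $\sup_{s\le t}E_\nu|x_s|^2<\infty$), and an It\^o integral whose $L^2(\nu)$-norm is at most $\sup_{s\le t}|y(\tau_n(s))-y(s)|\,\big(\int_0^t|b|^2\,ds\big)^{1/2}\to 0$ by uniform continuity of $y$; hence $z_t^{(n)}(y;\cdot)\to z_t(y;\cdot)$ in $\nu$-probability. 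Since $z_t^{(n)}(y;x)\le\exp(\ell_n(y;x))$ with $\ell_n(y;x)=x_t^\top y(t)-\int_0^t y(\tau_n(s))^\top dx_s$ Gaussian under $\nu$ with mean and variance bounded by $C(1+\sup_{s\le t}|y(s)|)$ and $C(1+\sup_{s\le t}|y(s)|^2)$ respectively, \emph{uniformly in $n$} (again via $dx=ax\,ds+b\,dv$, the It\^o isometry, and $\sup_{s\le t}E_\nu|x_s|^2<\infty$), the family $\{z_t^{(n)}(y;\cdot)\}_n$ is bounded in every $L^p(\nu)$. Combined with $x_{\tau_n(u_j)}\to x_{u_j}$ $\nu$-a.s.\ and in every $L^p(\nu)$ (continuity of paths and Fernique's theorem for $\nu$), Vitali's theorem and H\"older's inequality give $\psi_n(y)\to\psi(y)$ and $\Psi_n(y)\to\Psi(y)$; and $\psi_n(y)\ge\exp\big(E_\nu\log z_t^{(n)}(y;x)\big)\ge\exp\big(-C(1+\sup_{s\le t}|y(s)|)\big)$ by Jensen's inequality (likewise $\psi(y)>0$), so the ratios converge for $\mathcal L(\hat Y)$-a.e.\ $y$, which is (i).

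For (ii), the crucial observation is that for each fixed $y$ the probability measure $\nu_n^y(dx)=z_t^{(n)}(y;x)\,\nu(dx)/\psi_n(y)$ is again Gaussian, since $\log z_t^{(n)}(y;\cdot)$ is the sum of a (measurable) linear functional and a nonpositive quadratic form in the Gaussian variable $x$. Its covariance operator is $(\mathcal C^{-1}+\mathcal R)^{-1}$ in the usual sense, $\mathcal C$ and $\mathcal R$ being the covariance of $\nu$ and the quadratic form $x\mapsto\int_0^t x_s^\top R(s)x_s\,ds$; in particular it is independent of $y$ and of $n$ and dominated by $\mathcal C$, while its mean $\mu_n(y)$ is affine in $y$ and, because the mean shift along $\ell_n$ is controlled by the $\nu$-variance of $\ell_n(y;x)$ estimated in (i), satisfies $\sup_{s\le t}|\mu_n(y)_s|\le C(1+\sup_{s\le t}|y(s)|)$ uniformly in $n$. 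Consequently $\Psi_n(y)/\psi_n(y)=E_{\nu_n^y}[x_{\tau_n(u_1)}\otimes\cdots\otimes x_{\tau_n(u_k)}]$ is, by Wick's formula, a universal polynomial of degree $k$ in the means and covariances of the $x_{\tau_n(u_j)}$ (all with $\tau_n(u_j)\in[0,t]$) under $\nu_n^y$, so $|\Psi_n(\hat Y)/\psi_n(\hat Y)|\le C_k(1+\sup_{s\le t}|\hat Y_s|)^k$ with $C_k$ independent of $n$. Since $\hat Y$ is a centred continuous Gaussian process under $Q$ — a Wiener integral of the deterministic kernel $c(\cdot)^\top(\sigma(\cdot)\sigma(\cdot)^\top)^{-3/2}$ against the $Q$-Brownian motion $\overline Y$, the kernel being square-integrable on $[0,t]$ by \eqref{eq-assumption-sigma-2} — Fernique's theorem gives $\sup_{s\le t}|\hat Y_s|\in L^q(Q)$ for every $q<\infty$, and (ii) follows.

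The step I expect to be the main obstacle is the uniform-in-$n$ control behind (ii): that the tilted Gaussian covariance stays bounded and its mean grows at most linearly in $\sup_{s\le t}|\hat Y_s|$, with constants not depending on $n$. This relies on the Gaussian structure — $z_t^{(n)}$ is an exponential of a quadratic functional whose linear part has $\nu$-variance $O(\sup_{s\le t}|y(s)|^2)$ uniformly in $n$ (thanks to $\int_0^t|a|\,ds,\int_0^t|b|^2\,ds<\infty$) and whose quadratic part is nonpositive — together with Fernique's theorem; the naive estimate of numerator over denominator is far too lossy, because $\psi_n(\hat Y)$ itself has only low-order moments. A secondary, bookkeeping-type difficulty is the identification of $E_Q[Z_t\mid\mathcal Y_t]$ and $E_Q[X_{u_1}\otimes\cdots\otimes X_{u_k}Z_t\mid\mathcal Y_t]$ with $\psi(\hat Y),\Psi(\hat Y)$ in the presence of the stochastic integral $\int_0^t y(s)^\top dx_s$, which is supplied by Lemma~\ref{lemma-stochastic-integral-measurable-function}.
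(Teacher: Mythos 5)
Your argument is correct, but it reaches the conclusion by a genuinely different route than the paper. The paper's own proof rests on the explicit closed-form computation of the ratio: via the change of measure to $\xi^0$ in (\ref{eq-rewrite-exp}), Appendix~\ref{section-expectation-calculation} shows that $E_Q[X_{u_1}\otimes\cdots\otimes X_{u_k}z_t(y)]/E_Q[z_t(y)]$ is a tensor product of affine functionals of the path $y$ with integrable kernels (formula (\ref{eq-tensor-expectation})), and the lemma then follows by substituting $\hat Y^{(n)}$ and $\hat Y$ into this fixed multi-affine expression and using $\sup_{0\le s\le t}E_Q\bigl[|\hat Y_s-\hat Y^{(n)}_s|^p\bigr]\to 0$. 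You never invoke that formula; you extract only its structural content — the tilted law $\nu_n^y$ is Gaussian with covariance independent of $(n,y)$ and mean affine in $y$ with uniformly controlled growth — and run an ``almost-sure convergence plus uniform third-moment bound $\Rightarrow L^2$'' scheme, with Vitali's theorem handling the fixed-$y$ integrals, Jensen supplying the lower bound on the denominator, and Wick's formula plus Fernique supplying the uniform integrability. What your route buys is independence from the explicit Gaussian computation of the appendix, and it makes fully explicit the uniform-in-$n$ estimate that the paper leaves implicit in the integrability of the kernels $r_j$; what the paper's route buys is brevity, since once (\ref{eq-tensor-expectation}) is in hand the $L^2$ convergence is immediate from the $L^p$ convergence of the discretized paths. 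Two minor points of care: the identity $\hat Y_s=\int_0^s c(u)^\top(\sigma(u)\sigma(u)^\top)^{-1}dY_u$ makes the Wiener kernel against $\overline Y$ equal to $c^\top(\sigma\sigma^\top)^{-1/2}$ rather than $c^\top(\sigma\sigma^\top)^{-3/2}$ (your exponent comes from taking the sign in the statement of Proposition~\ref{prop-Kallianpur-Striebel-linear} literally; square-integrability holds either way, so nothing breaks), and the expression $(\mathcal C^{-1}+\mathcal R)^{-1}$ for the tilted covariance should be interpreted through finite-dimensional marginals or the Cameron--Martin space, though the only two properties you actually use — independence of $(n,y)$ and domination by $\mathcal C$ — are robust under that reading.
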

\begin{proof}
  If we define $\hat{Y}^{(n)}(s) = \sum_{i=1}^n \hat{Y}_{s_{i-1}} 1_{[s_{i-1}, s_i)}(s) + \hat{Y}_t 1_{\{t\}}(s)$, then we have \( Z_t^{(n)} = z_t(\hat{Y}^{(n)}) \), and it follows that
\begin{align*}
  E_Q[Z_t^{(n)} | \mathcal{Y}_t] = E_Q[z_t(y)] \big|_{y = \hat{Y}^{(n)}}
\end{align*}
and
\begin{align*}
  E_Q[X_{u_1}^{(n)} \otimes \cdots \otimes X_{u_k}^{(n)} Z_t^{(n)} | \mathcal{Y}_t] = E_Q[X_{u_1}^{(n)} \otimes \cdots \otimes X_{u_k}^{(n)} z_t(y)] \big|_{y = \hat{Y}^{(n)}}.
\end{align*}
Therefore, the desired result follows from the convergence $\sup_{0 \leq s \leq t} E_Q\left[\left| \hat{Y}_s - \hat{Y}_s^{(n)} \right|^p \right] \to 0$ for \( p \geq 1 \), and the fact that we can write 
\begin{align}
  \label{eq-tensor-expectation}\begin{split}
    &\frac{E_Q[ X_{u_1}\otimes \cdots \otimes X_{u_{n}}z_t(y)]}{E_Q[ z_t(y)]}=\bigotimes_{j=1}^n \left\{ p_{j}(u_j;t)+q_j(u_j;t) y(t)+ \int_0^tr_j(u_j,s;t) y(s)ds \right\}
  \end{split}    
  \end{align}
  where $\int_0^t |r_j(u_j,s;t)|<\infty$ (see Section \ref{section-expectation-calculation}).

\end{proof}

\begin{proof}[Proof of Theorem \ref{theorem-normal-expectation}]
  First, let us write
  \begin{align*}
    &z_t^{(n)}(y_0,\cdots,y_n)
    =\exp\left( X_t^\top y_{n}- \sum_{i=1}^n y_{i-1}^\top (X_{s_i}-X_{s_{i-1}})\right.\\
    &\left. -\frac{1}{2}\int_0^tX_s^\top c(s)^\top(\sigma(s)\sigma(s)^\top)^{-1} c(s)X_s ds\right),
  \end{align*}
  and consider the function $F(y_0,\cdots,y_{n})=E_Q\left[z_t^{(n)}(y_0,\cdots,y_n)\right]$ for $y_0,\cdots,y_{n} \in \mathbb{R}^{d_1}$. Then it is easy to verify that
  \begin{align*}
    &\partial_{y_{i}}F(y_0,\cdots,y_{n})
    =-E_Q\left[(X_{s_{i+1}}-X_{s_{i}})z_t^{(n)}(y_0,\cdots,y_n)\right]~~(i=0,1,\cdots,n),
  \end{align*}
  where $X_{s_{n+1}}=0$ for convenience, and higher derivatives can be calculated in the same manner. 
  Therefore, we have 
  \begin{align}
    &\sum_{i,j=0}^{n}\partial_{y_i}\otimes \partial_{y_j}\log F(y_0,\cdots,y_n)1_{[0,s_{i+1})}(s)1_{[0,s_{j+1})}(u)\nonumber\\
    =&\sum_{i,j=0}^{n}\frac{E_Q[(X_{s_{i+1}}-X_{s_i})\otimes (X_{s_{j+1}}-X_{s_j})z_t^{(n)}(y_0,\cdots,y_n)]}{E_Q[z_t^{(n)}(y_0,\cdots,y_n)]}1_{[0,s_{i+1})}(s)1_{[0,s_{j+1})}(u)\nonumber\\
    &-\sum_{i,j=0}^{n}\frac{E_Q[(X_{s_{i+1}}-X_{s_i})z_t^{(n)}(y_0,\cdots,y_n)]}{E_Q[z_t^{(n)}(y_0,\cdots,y_n)]}1_{[0,s_{i+1})}(s)\nonumber\\
    &\otimes \frac{ E_Q[(X_{s_{j+1}}-X_{s_j})z_t^{(n)}(y_0,\cdots,y_n)]}{E_Q[z_t^{(n)}(y_0,\cdots,y_n)]}1_{[0,s_{j+1})}(u)\nonumber \\
    \label{eq3-11}\begin{split}
      =&\frac{E_Q[X_s^{(n)}\otimes X_u^{(n)}z_t^{(n)}(y_0,\cdots,y_n)]}{E_Q[z_t^{(n)}(y_0,\cdots,y_n)]}\\
      &-\frac{E_Q[X_s^{(n)}z_t^{(n)}(y_0,\cdots,y_n)]\otimes E_Q[X_u^{(n)}z_t^{(n)}(y_0,\cdots,y_n)]}{E_Q[z_t^{(n)}(y_0,\cdots,y_n)]^2},
    \end{split}    
  \end{align}
  where $X^{(n)}$ is defined in (\ref{eq-def-X-n}).

  On the other hand, it follows from Lemma \ref{lemma-quadratic-form} that $\log F(y_1,\cdots,y_{n})$ is a quadratic form of $y_0,\cdots,y_{n}$, and thus $\partial_{y_i}\otimes \partial_{y_j}\log F(y_0,\cdots,y_n)$ does not depend on $y_0,\cdots,y_n$. Thus (\ref{eq3-11}) does not depend on $y_0,\cdots,y_n$, and we have
  \begin{align*}
    &\frac{E_Q[X_s^{(n)}\otimes X_u^{(n)}z_t^{(n)}(y_0,\cdots,y_n)]}{E_Q[z_t^{(n)}(y_0,\cdots,y_n)]}\\
    &-\frac{E_Q[X_s^{(n)}z_t^{(n)}(y_0,\cdots,y_n)]\otimes E_Q[X_u^{(n)}z_t^{(n)}(y_0,\cdots,y_n)]}{E_Q[z_t^{(n)}(y_0,\cdots,y_n)]^2}\\
    =&\frac{E_Q[X_s^{(n)}\otimes X_u^{(n)}z_t^{(n)}(0,\cdots,0)]}{E_Q[z_t^{(n)}(0,\cdots,0)]}\\
    &-\frac{E_Q[X_s^{(n)}z_t^{(n)}(0,\cdots,0)]\otimes E_Q[X_u^{(n)}z_t^{(n)}(0,\cdots,0)]}{E_Q[z_t^{(n)}(0,\cdots,0)]^2}.
  \end{align*}
  Substituting $(\hat{Y}_{s_0},\cdots,\hat{Y}_{s_{n}})$ into $(y_0,\cdots,y_n)$, we obtain
  \begin{align}
      &\frac{E_Q[X_s^{(n)}\otimes X_u^{(n)}Z_t^{(n)}|\mathcal{Y}_t]}{E_Q[Z_t^{(n)}|\mathcal{Y}_t]}
      -\frac{E_Q[X_s^{(n)}Z_t^{(n)}|\mathcal{Y}_t]\otimes E_Q[X_u^{(n)}Z_t^{(n)}|\mathcal{Y}_t]}{E_Q[Z_t^{(n)}|\mathcal{Y}_t]^2}\nonumber\\
    =&\frac{\displaystyle E\left[X_s^{(n)}\otimes X_u^{(n)}\exp\left( -\frac{1}{2}\int_0^tX_s^\top c(s)^\top(\sigma(s)\sigma(s)^\top)^{-1} c(s)X_s ds\right)\right]}{\displaystyle E\left[\exp\left( -\frac{1}{2}\int_0^tX_s^\top c(s)^\top(\sigma(s)\sigma(s)^\top)^{-1} c(s)X_s ds\right)\right]}\nonumber\\
    &-\frac{\displaystyle E\left[X_s^{(n)}\exp\left( -\frac{1}{2}\int_0^tX_s^\top c(s)^\top(\sigma(s)\sigma(s)^\top)^{-1} c(s)X_s ds\right)\right]}{\displaystyle E\left[\exp\left( -\frac{1}{2}\int_0^tX_s^\top c(s)^\top(\sigma(s)\sigma(s)^\top)^{-1} c(s)X_s ds\right)\right]}\nonumber\\
    \label{eq3-24}&\otimes \frac{\displaystyle E\left[X_u^{(n)}\exp\left( -\frac{1}{2}\int_0^tX_s^\top c(s)^\top(\sigma(s)\sigma(s)^\top)^{-1} c(s)X_s ds\right)\right]}{\displaystyle E\left[\exp\left( -\frac{1}{2}\int_0^tX_s^\top c(s)^\top(\sigma(s)\sigma(s)^\top)^{-1} c(s)X_s ds\right)\right]}
  \end{align}
  since $z_t^{(n)}(\hat{Y}_{s_0},\cdots,\hat{Y}_{s_{n}})=Z_t^{(n)}$, as introduced in (\ref{eq-def-hat-Z}). Here, we used the distribution of $\{X_t\}_{0\leq t\leq T}$ is the same under $E$ and $E_Q$. By Lemma \ref{lemma-3-2}, and Proposition \ref{prop-Kallianpur-Striebel-linear}, the left-hand side of this converges in $L^2$ to 
  \begin{align*}
    &\frac{E_Q[X_s\otimes X_uZ_t|\mathcal{Y}_t]}{E_Q[Z_t|\mathcal{Y}_t]}-\frac{E_Q[X_sZ_t|\mathcal{Y}_t]\otimes E_Q[X_uZ_t|\mathcal{Y}_t]}{E_Q[Z_t|\mathcal{Y}_t]^2}
    =\mathrm{Cov}(X_s,X_u|\mathcal{Y}_t)\nonumber
  \end{align*}
  as $n \to \infty$. On the other hand, the $L^2$-convergence of the right-hand side directly follows from the $L^2$-convergence of $X_s^{(n)}$. Therefore, we obtain (\ref{eq-cov}).

  In the same way, since the third and higher derivatives of $\log F(y_0,\cdots,y_n)$ are zero, we have
  \begin{align*}
    0=&\sum_{i_1,\cdots,i_k=0}^{n}\partial_{y_{i_1}}\otimes \cdots\otimes \partial_{y_{i_k}}\log F(\hat{Y}_{s_0},\cdots,\hat{Y}_{s_{n}})1_{[0,s_{i_1}]}(u_1)\cdots 1_{[0,s_{i_k}]}(u_k)
  \end{align*}
  for $k\geq 3$. The right-hand side converges in probability to the $k$-th cumulant of $X_{u_1},\cdots,X_{u_k}$, and thus it must be almost surely zero, which means that $\{X_s\}_{0\leq s\leq t}$ is conditionally Gaussian.

  To prove (\ref{eq-mean}), let us consider
  \begin{align}
     &\sum_{i=0}^{n}\partial_{y_i}\log F(\hat{Y}_{s_0},\cdots,\hat{Y}_{s_{n}})1_{[0,s_{i+1})}(s)=\frac{E_Q[X_s^{(n)}Z_t^{(n)}|\mathcal{Y}_t]}{E_Q[Z_t^{(n)}|\mathcal{Y}_t]}, \label{eq3-10}
  \end{align}
  and let $D$ be the Malliavin derivative with respect to the Brownian motion $\overline{Y}_s=\int_0^s (\sigma(s)\sigma(s)^\top)^{-\frac{1}{2}}dY_s$ on $Q$. Since $\log F(y_0,\cdots,y_n)$ is a quadratic form, $\partial_{y_i}\log F(y_0,\cdots,y_n)$ is linear. Hence, by the definition of the Malliavin derivative of smooth random variables \citep{nualart2006malliavin} and $\hat{Y}_t=\int_0^t c(s)^\top (\sigma(s)\sigma(s)^\top)^{-\frac{1}{2}}d\overline{Y}_s,$ the Malliavin derivative of (\ref{eq3-10}) is
  \begin{align*}
    &D_u\frac{E_Q[X_s^{(n)}Z_t^{(n)}|\mathcal{Y}_t]}{E_Q[Z_t^{(n)}|\mathcal{Y}_t]}\\
    =&\sum_{i=0}^{n}(\sigma(u)\sigma(u)^\top)^{-\frac{1}{2}}c(u)\partial_{y_j}\partial_{y_i}\log F(\hat{Y}_{s_0},\cdots,\hat{Y}_{s_{n}})1_{[0,s_i]}(s)1_{[0,s_j]}(u)\\
    =&(\sigma(u)\sigma(u)^\top)^{-\frac{1}{2}}c(u)\left\{ \frac{E[X_u^{(n)}\otimes X_s^{(n)}Z_t^{(n)}|\mathcal{Y}_t]}{E[Z_t^{(n)}|\mathcal{Y}_t]}-\frac{E[X_u^{(n)}Z_t^{(n)}|\mathcal{Y}_t]\otimes E[X_s^{(n)}Z_t^{(n)}|\mathcal{Y}_t]}{E[Z_t^{(n)}|\mathcal{Y}_t]^2} \right\}.
  \end{align*}
  According to (\ref{eq3-24}), this is deterministic. Thus, by the Clark-Ocone formula \citep{nualart2006malliavin}, we can write
  \begin{align*}
    &\frac{E_Q[X_s^{(n)}Z_t^{(n)}|\mathcal{Y}_t]}{E_Q[Z_t^{(n)}|\mathcal{Y}_t]}
    =g^{(n)}(s;t)+\int_0^t\left( \frac{E[X_u^{(n)}\otimes X_s^{(n)}Z_t^{(n)}|\mathcal{Y}_t]}{E[Z_t^{(n)}|\mathcal{Y}_t]}\right.\\
    &\left.\qquad -\frac{E_Q[X_u^{(n)}Z_t^{(n)}|\mathcal{Y}_t]\otimes E_Q[X_s^{(n)}Z_t^{(n)}|\mathcal{Y}_t]}{E_Q[Z_t^{(n)}|\mathcal{Y}_t]^2} \right)^\top c(u)^\top (\sigma(u)\sigma(u)^\top)^{-1}dY_u,
  \end{align*}
  where $g^{(n)}(s;t)$ is a deterministic function. Due to Lemma \ref{lemma-3-2}, for every $s,t$, $g^{(n)}(s;t)$ converges, and
  \begin{align*}
    \left\{\frac{E_Q[X_u^{(n)}\otimes X_s^{(n)}Z_t^{(n)}|\mathcal{Y}_t]}{E_Q[Z_t^{(n)}|\mathcal{Y}_t]}
    -\frac{E_Q[X_u^{(n)}Z_t^{(n)}|\mathcal{Y}_t]\otimes E_Q[X_s^{(n)}Z_t^{(n)}|\mathcal{Y}_t]}{E_Q[Z_t^{(n)}|\mathcal{Y}_t]^2}\right\}_{0\leq u \leq t}
  \end{align*}
  converges in $L^2(\Omega;H)$ where $H=L^2([0,t])$. On the other hand, we have shown above that for each $s,u \in [0,t]$, this converges to $\gamma(s,u;t)$ in $L^2$. Therefore, by letting $n\to \infty$, we obtain
  \begin{align*}
    E[X_s|\mathcal{Y}_t]&=\frac{E_Q[X_sZ_t|\mathcal{Y}_t]}{E_Q[Z_t|\mathcal{Y}_t]} =g(s;t)+\int_0^t \gamma(s,u;t)c(u)^\top(\sigma(u)\sigma(u)^\top)^{-1}dY_u,
  \end{align*}
  where $g(s;t)$ is a deterministic function. By taking expectations of both sides under $P$, it follows that 
  \begin{align*}
    E[X_s]&=E[E[X_s|\mathcal{Y}_t]]
    =g(s;t)+\int_0^t \gamma(s,u;t)c(u)^\top(\sigma(u)\sigma(u)^\top)^{-1}c(u)E[X_u]du.
  \end{align*}
  This gives the expression of $g(s;t)$, which leads to the desired result.
\end{proof}

Now, the conditional density of the path $\{X_s\}_{0 \leq s \leq t}$ given $\mathcal{Y}_t$ is reduced to the computation of $\gamma(s, u; t)$ in (\ref{eq-gamma}). Since this is a ratio of unconditional expectations, we can apply well-established techniques from stochastic analysis.

To compute $\gamma$, for each fixed \( t \geq 0 \), we introduce an \( M_{d_1}(\mathbb{R}) \)-valued differentiable function \( \phi(s; t) \), defined as the negative semidefinite symmetric solution to the matrix Riccati equation
\begin{align}
  \label{eq-def-phi-tmp}
  \begin{split}
    \frac{d\phi}{ds}(s; t) &= -\phi(s; t) b(s) b(s)^\top \phi(s; t) - a(s)^\top \phi(s; t) - \phi(s; t) a(s) \\
    &+ c(s)^\top (\sigma(s) \sigma(s)^\top)^{-1} c(s)
  \end{split}
\end{align}
with the boundary condition \( \phi(t; t) = 0 \). This solution is well-defined. Specifically, the Riccati equation
\begin{align*}
  \frac{d\tilde{\phi}}{ds}(s; t) &= -\tilde{\phi}(s; t) b(t - s) b(t - s)^\top \tilde{\phi}(s; t)  - a(t - s)^\top \tilde{\phi}(s; t) - \tilde{\phi}(s; t) a(t - s) \\
  &\quad + c(t - s)^\top (\sigma(t - s) \sigma(t - s)^\top)^{-1} c(t - s)
\end{align*}
with \( \tilde{\phi}(0; t) = 0 \) has a positive-semidefinite solution \( \tilde{\phi}(s; t) \) for \( 0 \leq s \leq t \), according to Theorems 2.1 and 2.2 in \citet{potter1965matrix}. It then follows immediately that \( \phi(s; t) = -\tilde{\phi}(t - s; t) \) satisfies (\ref{eq-def-phi}), and uniqueness can be established using Gronwall's lemma.

Also, let \( \{\tilde{V}_s\} \) be a \( d_1 \)-dimensional Brownian motion on \( (\Omega, \mathcal{F}, \{\mathcal{F}_t\}, P) \) that is independent of \( \{Y_s\} \) and \( X_0 \). Let \( \{\xi_{s; t}^0\}_{0 \leq s \leq t} \) be the solution to the stochastic differential equation
\begin{align}
  \label{eq-def-xi_0}
  d_s\xi_{s; t}^0 = \left\{ a(s) + b(s) b(s)^\top \phi(s; t) \right\} \xi_{s; t}^0 \, ds + b(s) \, d\tilde{V}_s,~~\xi_{0;t}^0=X_0.
\end{align}

We then obtain the following key lemma, which follows as a consequence of Girsanov's theorem and It\^o's formula.
\begin{lemma}
  Let $X.$ and $\xi_{\bf{\cdot}}^0$ be the paths of $\{X_s\}_{0\leq s\leq t}$ and $\{\xi_{s,t}^0\}_{0\leq s\leq t}$, and $\mu_X$ and $\mu_{\xi^0}$ be their distributions on $P$. 
  \begin{align}
    \label{eq-measure-change}\begin{split}      
    \frac{d\mu_{\xi^0}}{d\mu_X}(X.)=\exp\biggl( &-\frac{1}{2}\int_0^t X_s^\top c(s)^\top (\sigma(s)\sigma(s)^\top)^{-1} c(s) X_sds\\
    &-\frac{1}{2}\int_0^t b(s)^\top \phi(s;t)b(s)ds-\frac{1}{2}X_0^\top \phi(0;t) X_0\biggr).
    \end{split}
  \end{align}
\end{lemma}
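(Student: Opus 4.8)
The plan is to recognise (\ref{eq-measure-change}) as a Girsanov density and then to evaluate the resulting stochastic exponential by means of It\^o's formula and the Riccati equation (\ref{eq-def-phi-tmp}).

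\textbf{Step 1 (the density is a stochastic exponential).}
The drift of $\xi^0$ in (\ref{eq-def-xi_0}) differs from that of $X$ in (\ref{eq-linear-system-X}) only by the term $b(s)b(s)^\top\phi(s;t)\xi^0_{s;t}$, which lies in the range of the diffusion coefficient $b(s)$. Writing $\theta_s := b(s)^\top\phi(s;t)X_s$ and using $b(s)\,dV_s = dX_s - a(s)X_s\,ds$, set
\begin{align*}
  \mathcal{E}_t &= \exp\left( \int_0^t \theta_s^\top \, dV_s - \frac{1}{2}\int_0^t |\theta_s|^2\,ds \right)\\
  &= \exp\left( \int_0^t X_s^\top \phi(s;t)\bigl(dX_s - a(s)X_s\,ds\bigr)\right.\\
  &\qquad\left. - \frac{1}{2}\int_0^t X_s^\top \phi(s;t) b(s)b(s)^\top \phi(s;t) X_s\,ds \right).
\end{align*}
Assuming $\mathcal{E}_t$ is a true $P$-martingale (Step 2), put $\tilde{P} := \mathcal{E}_t\cdot P$ on $\mathcal{F}_t$. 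By Girsanov's theorem $\tilde{V}_s = V_s - \int_0^s \theta_u\,du$ is a $\tilde{P}$-Brownian motion, and substituting $dV_s = d\tilde{V}_s + \theta_s\,ds$ into (\ref{eq-linear-system-X}) shows that $X$ solves (\ref{eq-def-xi_0}) under $\tilde{P}$; since $\mathcal{E}_0 = 1$, the law of $X_0$ is unchanged and stays independent of $\tilde{V}$, so the law of $X_\cdot$ under $\tilde{P}$ equals $\mu_{\xi^0}$. As $\mathcal{E}_t$ is a measurable functional of $X_\cdot$, this yields $\frac{d\mu_{\xi^0}}{d\mu_X}(X_\cdot) = \mathcal{E}_t$.

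\textbf{Step 2 (true martingale property --- the main obstacle).}
Novikov's criterion is not directly available, because $\int_0^t |\theta_s|^2\,ds$ is a quadratic functional of the Gaussian process $X$, so $E_P[\exp(\tfrac12\int_0^t|\theta_s|^2\,ds)]$ may fail to be finite on long horizons. Instead, since the coefficients of (\ref{eq-def-xi_0}) are linear, hence of at most linear growth, the corresponding solution does not explode on $[0,t]$, and a Bene\v s-type criterion (see e.g.\ \citet{Liptser2001}) guarantees that $\mathcal{E}$ is a genuine martingale. Alternatively one localises along $\tau_N = \inf\{s : |X_s| \geq N\}$, uses $E_P[\mathcal{E}_{t\wedge\tau_N}] = 1$, and passes to the limit via the Gaussian tail bound on $\sup_{0\leq s\leq t}|X_s|$ to obtain uniform integrability of $\{\mathcal{E}_{t\wedge\tau_N}\}_N$. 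This verification is the only genuinely delicate point of the proof.

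\textbf{Step 3 (It\^o's formula and the Riccati identity).}
Applying It\^o's formula to $s\mapsto \tfrac12 X_s^\top\phi(s;t)X_s$, using $d\langle X\rangle_s = b(s)b(s)^\top\,ds$ and the terminal condition $\phi(t;t)=0$, gives
\begin{align*}
  \int_0^t X_s^\top \phi(s;t)\,dX_s &= -\frac12 X_0^\top \phi(0;t)X_0 - \frac12\int_0^t X_s^\top \frac{d\phi}{ds}(s;t)\,X_s\,ds\\
  &\quad - \frac12\int_0^t \mathrm{tr}\!\left(b(s)^\top \phi(s;t)b(s)\right) ds.
\end{align*}
Substituting this into $\log\mathcal{E}_t$ and then replacing $\frac{d\phi}{ds}$ by the right-hand side of the Riccati equation (\ref{eq-def-phi-tmp}): the symmetry of $\phi$ lets the $a^\top\phi$ and $\phi a$ terms combine into $X_s^\top\phi(s;t)a(s)X_s$, which cancels the drift correction $-X_s^\top\phi(s;t)a(s)X_s$, while the $\phi b b^\top\phi$ term produced by the substitution cancels the quadratic term $-\tfrac12|\theta_s|^2 = -\tfrac12 X_s^\top\phi(s;t)b(s)b(s)^\top\phi(s;t)X_s$. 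What remains is exactly
\begin{align*}
  \log\mathcal{E}_t &= -\frac12\int_0^t X_s^\top c(s)^\top(\sigma(s)\sigma(s)^\top)^{-1}c(s)X_s\,ds\\
  &\quad - \frac12\int_0^t \mathrm{tr}\!\left(b(s)^\top \phi(s;t)b(s)\right) ds - \frac12 X_0^\top \phi(0;t)X_0,
\end{align*}
which is (\ref{eq-measure-change}). Thus the substantive work is Step 2; Steps 1 and 3 are routine once the change of measure is licensed.
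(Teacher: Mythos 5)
Your proof is correct and follows essentially the same route as the paper's: the paper obtains your $\mathcal{E}_t$ (your Steps 1--2) by citing the diffusion-law density formula (7.138) in \citet{Liptser2001}, and then performs exactly your Step 3 computation with It\^o's formula, the terminal condition $\phi(t;t)=0$, and the Riccati equation (\ref{eq-def-phi-tmp}). The only difference is that you derive the Girsanov density and verify the martingale property from scratch where the paper outsources this to the reference, so your argument is a self-contained version of the same proof.
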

\begin{proof}
  Due to (7.138) in \citet{Liptser2001}, $\mu_X$ and $\mu_{\xi^0}$ are equivalent, and it holds that 
  \begin{align}
    \label{eq3-3}\begin{split}      
    \frac{d\mu_{\xi^0}}{d\mu_X}(X.)=\exp\biggl(&\int_0^t X_{s}^\top\phi(s;t) dX_s\\
    &-\frac{1}{2}\int_0^tX_s^\top \phi(s;t) (2a(s)+b(s)b(s)^\top \phi(s;t))X_s ds \biggr).
    \end{split}
  \end{align}
  On the other hand, by It\^o's formula and (\ref{eq-def-phi-tmp}), we have
  \begin{align*}
    &X_t^\top \phi(t;t) X_t-X_0^\top \phi(0;t) X_0\\
    =&2\int_0^t X_s^\top\phi(s;t)dX_s+\int_0^t X_s^\top \frac{\partial}{\partial s}\phi(s;t) X_sds + \int_0^t b(s)^\top \phi(s;t)b(s)ds\\
    =&2\int_0^t X_s^\top\phi(s;t)dX_s-\int_0^t X_s^\top \phi(s;t)b(s)b(s)^\top \phi(s;t) X_sds \\
    &-\int_0^t X_s^\top a(s)^\top \phi(s;t) X_sds-\int_0^t X_s^\top  \phi(s;t)a(s) X_sds\\
    &+\int_0^t X_s^\top c(s)^\top (\sigma(s)\sigma(s)^\top)^{-1} c(s) X_sds
    + \int_0^t b(s)^\top \phi(s;t)b(s)ds\\
    =&2\int_0^t X_s^\top\phi(s;t)dX_s-\int_0^t X_s^\top \phi(s;t)b(s)b(s)^\top \phi(s;t) X_sds \\
    &-2\int_0^t X_s^\top  \phi(s;t)a(s) X_sds\\
    &+\int_0^t X_s^\top c(s)^\top (\sigma(s)\sigma(s)^\top)^{-1} c(s) X_sds
    + \int_0^t b(s)^\top \phi(s;t)b(s)ds.
  \end{align*}
  Therefore, noting that $\phi(t;t)=0$, it holds
  \begin{align*}
    &\int_0^t X_s^\top\phi(s;t)dX_s\\
    =&\frac{1}{2}\int_0^t X_s^\top \phi(s;t)b(s)b(s)^\top \phi(s;t) X_sds+\int_0^t X_s^\top  \phi(s;t)a(s) X_sds\\
    &-\frac{1}{2}\int_0^t X_s^\top c(s)^\top (\sigma(s)\sigma(s)^\top)^{-1} c(s) X_sds\\
    &-\frac{1}{2}\int_0^t b(s)^\top \phi(s;t)b(s)ds-\frac{1}{2}X_0^\top \phi(0;t) X_0.
  \end{align*}  
  Together with (\ref{eq3-3}), we obtain the desired result.  
\end{proof}

According to this result, the exponential term in the expression of $\gamma(s,u;t)$ in (\ref{eq-gamma}) can be rewritten as
\begin{align}
  \label{eq-rewrite-exp}
  \begin{split}
    &\exp\left( -\frac{1}{2}\int_0^t X_s^\top c(s)^\top (\sigma(s)\sigma(s)^\top)^{-1} c(s) X_s \, ds \right) \\
    = &\exp\left( \frac{1}{2}X_0^\top \phi(0;t) X_0+\frac{1}{2}\int_0^t b(s)^\top \phi(s;t)b(s)ds \right) \frac{d\mu_{\xi^0}}{d\mu_X}(X_\cdot).
  \end{split}  
\end{align}
Now, let us modify the initial value of $\xi_{s;t}^0$ and define a new process $\xi_{s;t}$ by
\begin{align}
  \label{eq-def-xi-tmp}d_s \xi_{s;t} = \left\{ a(s) + b(s) b(s)^\top \phi(s;t) \right\} \xi_{s;t} \, ds + b(s) \, d\tilde{V}_s,
\end{align}
where \( \xi_{0;t} \) is a Gaussian random variable, independent of \( \{\tilde{V}_s\} \) and \( \{Y_s\} \), with mean zero and covariance matrix
\begin{align}
  \label{eq-cov-xi0-tmp}
  \mathrm{Var}[\xi_{0;t}] = V[X_0]^{1/2} \left( I_{d_1} - V[X_0]^{1/2} \phi(0;t) V[X_0]^{1/2} \right)^{-1} V[X_0]^{1/2}.
\end{align}
Here, note that the matrix \( I_{d_1} - V[X_0]^{1/2} \phi(0;t) V[X_0]^{1/2} \) is positive definite since \( \phi \) is defined as a negative semidefinite solution. Then, \( \gamma(s,u;t) \) coincides with the covariance matrix of the process \( \xi \).
\begin{proposition}
  For every \( s, u \in [0, t] \), we have \( \gamma(s,u;t) = \mathrm{Cov}(\xi_{s;t}, \xi_{u;t}) \).
\end{proposition}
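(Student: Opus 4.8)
The plan is to recognise $\gamma(s,u;t)$ as the covariance of $(X_s,X_u)$ under a tilted probability measure on path space, and then to identify that tilted measure with the law of $\xi_{\cdot;t}$. From (\ref{def-gamma-s-u-t}) one reads off that $\gamma(s,u;t)=\mathrm{Cov}_{\nu}(X_s,X_u)$, where $\nu$ is the probability measure with $d\nu/dP\propto W$ and
\[
  W=\exp\!\left(-\tfrac{1}{2}\int_0^t X_r^\top c(r)^\top(\sigma(r)\sigma(r)^\top)^{-1}c(r)X_r\,dr\right).
\]
By (\ref{eq-rewrite-exp}), $W$ equals a positive deterministic constant times $\exp\!\big(\tfrac{1}{2}X_0^\top\phi(0;t)X_0\big)$ times $\frac{d\mu_{\xi^0}}{d\mu_X}(X_\cdot)$; since $\phi$ is negative semidefinite the $X_0$-dependent factor is bounded by $1$, so $\nu$ is well defined and $X_s,X_u$ are square integrable under it. In particular no integrability hypothesis beyond those already assumed is needed.

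Next I would transport the remaining tilt through the density $\frac{d\mu_{\xi^0}}{d\mu_X}$. Because $\frac{d\mu_{\xi^0}}{d\mu_X}(X_\cdot)\,d\mu_X=d\mu_{\xi^0}$ as measures on path space, the law of $X_\cdot$ under $\nu$ is the law of the path $\xi_{\cdot;t}^0$ of (\ref{eq-def-xi_0}) reweighted by $\exp\!\big(\tfrac{1}{2}(\xi_{0;t}^0)^\top\phi(0;t)\xi_{0;t}^0\big)$, a factor depending only on the initial value $\xi_{0;t}^0=X_0$. Since $\xi_{\cdot;t}^0$ solves a linear SDE driven by $\tilde V$, which is independent of $X_0$, such a reweighting changes only the distribution of the initial value — replacing $N(E[X_0],V[X_0])$ by its $\exp(\tfrac{1}{2}x^\top\phi(0;t)x)$-tilt — while leaving unchanged the conditional law of the path given the initial value, i.e.\ the driving noise and its independence of the initial value. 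A Gaussian-tilt computation identifies the tilted initial law as Gaussian with covariance
\[
  \big(V[X_0]^{-1}-\phi(0;t)\big)^{-1}=V[X_0]^{1/2}\big(I_{d_1}-V[X_0]^{1/2}\phi(0;t)V[X_0]^{1/2}\big)^{-1}V[X_0]^{1/2}=\mathrm{Var}[\xi_{0;t}],
\]
the middle identity being an elementary manipulation, valid because $I_{d_1}-V[X_0]^{1/2}\phi(0;t)V[X_0]^{1/2}$ is positive definite.

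Finally, $\gamma(s,u;t)$ is a centered second moment, so the mean of the initial law is irrelevant; comparing with (\ref{eq-def-xi-tmp})–(\ref{eq-cov-xi0-tmp}) — the same linear SDE, the same driving noise, and a mean-zero Gaussian initial value with covariance $\mathrm{Var}[\xi_{0;t}]$ — gives $\mathrm{Cov}_\nu(X_s,X_u)=\mathrm{Cov}(\xi_{s;t},\xi_{u;t})$, which is the claim. The step that needs the most care is the degenerate case in which $V[X_0]$ is singular: there $\big(V[X_0]^{-1}-\phi(0;t)\big)^{-1}$ is only formal, and the Gaussian-tilt argument must be carried out on the affine support $E[X_0]+\mathrm{Range}(V[X_0])$ of $X_0$ (equivalently, by tracking characteristic functions). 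This is precisely the subtlety that the symmetric square-root form (\ref{eq-cov-xi0-tmp}) is built to absorb, and it is the reason the proposition is phrased with that form rather than with $V[X_0]^{-1}$.
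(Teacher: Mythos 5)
Your argument is correct and follows essentially the same route as the paper: both use (\ref{eq-rewrite-exp}) to convert the tilted expectations over $X_\cdot$ into expectations over $\xi^0_{\cdot;t}$ weighted only by $\exp\bigl(\tfrac{1}{2}X_0^\top\phi(0;t)X_0\bigr)$, observe that this residual tilt affects only the initial value (the paper makes this explicit via the decomposition $\xi^0_{s;t}=\alpha(s;t)X_0+\overline{\xi}_{s;t}$), and identify the tilted initial covariance with (\ref{eq-cov-xi0-tmp}). Your closing remark about the singular-$V[X_0]$ case correctly explains why the symmetric square-root form is used; the paper's direct Gaussian computation produces that form without ever writing $V[X_0]^{-1}$, so no separate degenerate-case argument is needed there.
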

\begin{proof}
 Using (\ref{eq-rewrite-exp}), we obtain 
  \begin{align}
    &\gamma(s,u;t)
    =\frac{\displaystyle E\left[X_s\otimes X_u\exp\left( \frac{1}{2}X_0^\top \phi(0;t)X_0 \right)\frac{d\mu_{\xi^0}}{d\mu_X}(X.)\middle|\mathcal{Y}_t\right]}{\displaystyle E\left[\exp\left( \frac{1}{2}X_0^\top \phi(0;t)X_0 \right)\frac{d\mu_{\xi^0}}{d\mu_X}(X.)\middle|\mathcal{Y}_t\right]}\nonumber\\
    &-\frac{\displaystyle E\left[X_s\exp\left( \frac{1}{2}X_0^\top \phi(0;t)X_0 \right)\frac{d\mu_{\xi^0}}{d\mu_X}(X.)\middle|\mathcal{Y}_t\right]}{\displaystyle E\left[\exp\left( \frac{1}{2}X_0^\top \phi(0;t)X_0 \right)\frac{d\mu_{\xi^0}}{d\mu_X}(X.)\middle|\mathcal{Y}_t\right]}\nonumber\\
    &\otimes \frac{\displaystyle E\left[\exp\left( \frac{1}{2}X_0^\top \phi(0;t)X_0 \right)\frac{d\mu_{\xi^0}}{d\mu_X}(X.)\middle|\mathcal{Y}_t\right]}{\displaystyle E\left[\exp\left( \frac{1}{2}X_0^\top \phi(0;t)X_0 \right)\frac{d\mu_{\xi^0}}{d\mu_X}(X.)\middle|\mathcal{Y}_t\right]}\nonumber\\
    \label{eq-cov-breve-xi}\begin{split}      
    =&\frac{\displaystyle E_Q\left[\xi^0_{s;t}\otimes \xi^0_{u;t}\exp\left( \frac{1}{2}X_0^\top \phi(0;t)X_0 \right)\right]}{\displaystyle E_Q\left[\exp\left( \frac{1}{2}X_0^\top \phi(0;t)X_0 \right)\right]}\\
    &-\frac{\displaystyle E_Q\left[\xi^0_{s;t}\exp\left( \frac{1}{2}X_0^\top \phi(0;t)X_0 \right)\right]}{\displaystyle E_Q\left[\exp\left( \frac{1}{2}X_0^\top \phi(0;t)X_0 \right)\right]}
    \otimes \frac{\displaystyle E_Q\left[\xi^0_{u;t}\exp\left( \frac{1}{2}X_0^\top \phi(0;t)X_0 \right)\right]}{\displaystyle E_Q\left[\exp\left( \frac{1}{2}X_0^\top \phi(0;t)X_0 \right)\right]}.
    \end{split}
  \end{align}
  Now, write $\xi_{s;t}^0$ as 
  \begin{align}
    \label{eq-decompisition-xi0}\xi^0_{s; t} = \alpha(s; t) X_0 + \overline{\xi}_{s; t},
  \end{align}
  where
  \begin{align*}
    &\alpha(s;t)=\exp\left( \int_s^t \left\{ a(r)+b(r)b(r)^\top \phi(r;t) \right\} \right),~~\\
    &\overline{\xi}_{s; t}=\int_{0}^s\exp\left( \int_u^s \left\{ a(r)+b(r)b(r)^\top \phi(r;t) \right\} \right)b(s)d\tilde{V}_s
  \end{align*}
  Using this expression, and due to $\xi_{s;t}=\alpha(s;t)\xi_{0;t}+\bar{\xi}_{s;t}$ and (\ref{eq-cov-xi0-tmp}), (\ref{eq-cov-breve-xi}) becomes equivalent to  
  \begin{align*}
    &\mathrm{Cov}(\overline{\xi}_{s;t},\overline{\xi}_{u;t})
    +\frac{\displaystyle E\left[\alpha(s;t)X_0\otimes \alpha(u;t)X_0\exp\left( \frac{1}{2}X_0^\top \phi(0;t)X_0 \right)\right]}{\displaystyle E\left[\exp\left( \frac{1}{2}X_0^\top \phi(0;t)X_0 \right)\right]}\\
    &-\frac{\displaystyle E\left[\alpha(s;t)X_0\exp\left( \frac{1}{2}X_0^\top \phi(0;t)X_0 \right)\right]}{\displaystyle E\left[\exp\left( \frac{1}{2}X_0^\top \phi(0;t)X_0 \right)\right]}\\
    &\otimes \frac{\displaystyle E\left[\alpha(u;t)X_0\exp\left( \frac{1}{2}X_0^\top \phi(0;t)X_0 \right)\right]}{\displaystyle E\left[\exp\left( \frac{1}{2}X_0^\top \phi(0;t)X_0 \right)\right]}\\
    =&\mathrm{Cov}(\overline{\xi}_{s;t},\overline{\xi}_{u;t})+\alpha(s;t)V[X_0]^\frac{1}{2}(I-V[X_0]^\frac{1}{2}\phi(0;t)V[X_0]^\frac{1}{2})^{-1}V[X_0]^\frac{1}{2}\alpha(u;t)^\top\\
    =&\mathrm{Cov}(\xi_{s;t},\xi_{u;t}).
  \end{align*}

\end{proof}

To summarize the results, we obtain the following theorem.
\begin{theorem}\label{main-theorem}
  The conditional distribution of $\{X_s\}_{0\leq s\leq t}$ given $\mathcal{Y}_t$ is a Gaussian process with
  \begin{align}
    \label{eq-def-mu}\begin{split}
      &E[X_s|\mathcal{Y}_t]=E[X_s] \\
    &+ \int_0^t \mathrm{Cov}(\xi_{s; t}, \xi_{u; t}) c(u)^\top (\sigma(u) \sigma(u)^\top)^{-1} (dY_u - c(u) E[X_u] \, du),
    \end{split}\\
    \label{eq-def-gamma}&\mathrm{Cov}(X_s,X_u|\mathcal{Y}_t)=\mathrm{Cov}(\xi_{s;t},\xi_{u;t}),
  \end{align}
  where
\begin{align}
  \label{eq-def-phi}
  \begin{split}
    \frac{d\phi}{ds}(s; t) &= -\phi(s; t) b(s) b(s)^\top \phi(s; t) - a(s)^\top \phi(s; t) - \phi(s; t) a(s)\\ 
    &+ c(s)^\top (\sigma(s) \sigma(s)^\top)^{-1} c(s),
  \end{split}
\end{align}
with $\phi(t;t)=0$, and
\begin{align}
  \label{eq-def-xi}
  d_s\xi_{s; t} = \left\{ a(s) + b(s) b(s)^\top \phi(s; t) \right\} \xi_{s; t} \, ds + b(s) \, d\tilde{V}_s
\end{align}
with $E[\xi_{0; t}]=0$ and
\begin{align}
  \label{eq-cov-xi0}
  V[\xi_{0; t}] = V[X_0]^{\frac{1}{2}} \left( I_{d_1} - V[X_0]^{\frac{1}{2}} \phi(0; t) V[X_0]^{\frac{1}{2}} \right)^{-1} V[X_0]^{\frac{1}{2}}.
\end{align}
Therefore, the smoothing error $\{X_s-E[X_s|\mathcal{Y}_t]\}_{0\leq s\leq t}$ has the same distribution as the Ornstein-Uhlenbeck process $\{\xi_{s;t}\}_{0\leq s\leq t}$.
\end{theorem}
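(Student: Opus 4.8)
The plan is to obtain Theorem~\ref{main-theorem} by simply combining the two results already proved: Theorem~\ref{theorem-normal-expectation}, which identifies the conditional law of $\{X_s\}_{0\le s\le t}$ given $\mathcal{Y}_t$ as a Gaussian process with mean~(\ref{eq-mean}) and covariance~(\ref{eq-cov}), both written through the quantity $\gamma(s,u;t)$; and the Proposition immediately above, which shows $\gamma(s,u;t)=\mathrm{Cov}(\xi_{s;t},\xi_{u;t})$. Substituting the latter identity into~(\ref{eq-mean}) and~(\ref{eq-cov}) yields~(\ref{eq-def-mu}) and~(\ref{eq-def-gamma}) verbatim. The displays~(\ref{eq-def-phi}), (\ref{eq-def-xi}) and~(\ref{eq-cov-xi0}) are nothing but the defining relations~(\ref{eq-def-phi-tmp}), (\ref{eq-def-xi-tmp}) and~(\ref{eq-cov-xi0-tmp}) of $\phi$ and $\xi$ already introduced, so no further work is required there; I would, however, recall in one sentence that $\phi(\cdot;t)$ is the well-defined negative semidefinite solution (a time-reversed Potter Riccati solution) and that $I_{d_1}-V[X_0]^{1/2}\phi(0;t)V[X_0]^{1/2}$ is positive definite, so that~(\ref{eq-cov-xi0}) indeed specifies a covariance matrix and $\xi_{\cdot;t}$ is well defined.

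For the last assertion, about the smoothing error $\{X_s-E[X_s\mid\mathcal{Y}_t]\}_{0\le s\le t}$, I would argue as follows. Since $E[X_s\mid\mathcal{Y}_t]$ is $\mathcal{Y}_t$-measurable, conditionally on $\mathcal{Y}_t$ the error path is obtained from $\{X_s\}_{0\le s\le t}$ by a deterministic shift, hence its conditional law given $\mathcal{Y}_t$ is Gaussian with mean zero and covariance $\mathrm{Cov}(X_s,X_u\mid\mathcal{Y}_t)=\mathrm{Cov}(\xi_{s;t},\xi_{u;t})$. The crucial point is that this conditional law does not depend on the realization of $\mathcal{Y}_t$, so it is also the unconditional law of the smoothing error. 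On the other hand, $\{\xi_{s;t}\}_{0\le s\le t}$ solves the linear homogeneous SDE~(\ref{eq-def-xi}) driven by the Brownian motion $\tilde V$ with a centered Gaussian initial value independent of $\tilde V$, hence is itself a centered Gaussian process, whose covariance function is by construction $\mathrm{Cov}(\xi_{s;t},\xi_{u;t})$. Because the law of a Gaussian process is pinned down by its mean and covariance functions, the smoothing error and $\{\xi_{s;t}\}_{0\le s\le t}$ have the same distribution.

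I do not expect a substantive obstacle, since all the analytic content sits in the earlier lemmas; the steps needing care are bookkeeping. First, one should check that $\mathrm{Cov}(\xi_{s;t},\xi_{u;t})$ is finite and that the stochastic integral in~(\ref{eq-def-mu}) makes sense, which follows from $\int_0^t|a(s)|\,ds<\infty$, $\int_0^t|b(s)|^2\,ds<\infty$ together with the boundedness of $\phi(\cdot;t)$ on $[0,t]$ (so that $\alpha(\cdot;t)$ and $\overline{\xi}_{\cdot;t}$ from the Proposition's proof have all moments) and~(\ref{eq-assumption-sigma-2}). Second, I would make explicit the passage from the conditional to the unconditional law of the error via the $\mathcal{Y}_t$-independence of the conditional covariance, since that is exactly the mechanism delivering the pathwise statement. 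Optionally I would close with a remark reformulating~(\ref{eq-def-mu})--(\ref{eq-def-gamma}) as the identity in law $X_s \stackrel{d}{=} E[X_s\mid\mathcal{Y}_t] + \xi_{s;t}$ with $\xi_{\cdot;t}$ independent of $\mathcal{Y}_t$, which is the pathwise representation promised in the title.
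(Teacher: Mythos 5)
Your proposal is correct and matches the paper's treatment: the paper states this theorem as a direct summary of Theorem~\ref{theorem-normal-expectation} combined with the preceding proposition identifying $\gamma(s,u;t)=\mathrm{Cov}(\xi_{s;t},\xi_{u;t})$, exactly as you do. Your added argument for the final assertion---that the conditional law of the error is centered Gaussian with a covariance not depending on the realization of $\mathcal{Y}_t$, hence coincides with the unconditional law and with that of the centered Gaussian process $\{\xi_{s;t}\}$---is a sound and slightly more explicit justification than the paper provides.
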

This theorem enables pathwise sampling of the conditional error. Its practical applications are suggested in Section \ref{section-application}. 

\section{Filtering and smoothing equations}\label{section-linear-equations}
In this section, we show that filtering and smoothing equations can be derived as direct consequences of the main theorem. We denote
\begin{align*}
  \mu_{s;t} = E[X_s \mid \mathcal{Y}_t], \quad \gamma(s,u;t) = \mathrm{Cov}(X_s, X_u \mid \mathcal{Y}_t),
\end{align*}
We begin by deriving the Bryson–Frazier smoother as an immediate corollary of Theorem~\ref{main-theorem}.

\begin{proposition} {\bf (Bryson–Frazier smoother)}\\
  The conditional mean $\mu_{s;t}$ satisfies the equation 
  \begin{align*}
    d_s\mu_{s;t}=&\left\{ a(s) + b(s) b(s)^\top \phi(s; t) \right\}\mu_{s;t}ds-b(s) b(s)^\top \phi(s; t)E[X_s]ds\\
    &+b(s)b(s)^\top\rho_{s;t} ds,\\
    \mu_{0;t}=& V[X_0]^{\frac{1}{2}} \left( I_{d_1} - V[X_0]^{\frac{1}{2}} \phi(0; t) V[X_0]^{\frac{1}{2}} \right)^{-1} V[X_0]^{\frac{1}{2}}\rho_{0;t}&
  \end{align*}
  where $\rho_{s;t}$ is the solution of the equation
  \begin{align*}
    d_s\rho_{s;t}=&\left\{ a(s) + b(s) b(s)^\top \phi(s; t) \right\}\rho_{s;t}ds\\
    &+c(s)^\top(\sigma(s)\sigma(s))^{-1}(dY_s - c(s)E[X_s]ds),\\
    \rho_{t;t}=&0.
  \end{align*}
\end{proposition}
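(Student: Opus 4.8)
The plan is to read the two equations straight off the explicit representation in Theorem~\ref{main-theorem}, exploiting the fact that $\{\xi_{s;t}\}_{0\le s\le t}$ in~(\ref{eq-def-xi}) is a time-inhomogeneous Ornstein--Uhlenbeck process. Write $A(s;t):=a(s)+b(s)b(s)^\top\phi(s;t)$ for its drift matrix, let $\Phi(s,u;t)$ be the associated transition matrix (the solution of $\partial_s\Phi(s,u;t)=A(s;t)\Phi(s,u;t)$, $\Phi(u,u;t)=I_{d_1}$), and set $P(s;t):=\mathrm{Var}(\xi_{s;t})$, which by~(\ref{eq-def-gamma}) equals the smoothing variance $\gamma(s,s;t)$. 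Because the increments of $\tilde V$ after time $u$ are independent of $\xi_{u;t}$, the covariance kernel is separable,
\begin{align*}
  \mathrm{Cov}(\xi_{s;t},\xi_{u;t})&=\Phi(s,u;t)\,P(u;t)\quad(u\le s),\\
  \mathrm{Cov}(\xi_{s;t},\xi_{u;t})&=P(s;t)\,\Phi(u,s;t)^\top\quad(u\ge s),
\end{align*}
and $P(s;t)$ solves the Lyapunov equation $\dot P(s;t)=A(s;t)P(s;t)+P(s;t)A(s;t)^\top+b(s)b(s)^\top$ with $P(0;t)=V[\xi_{0;t}]$ from~(\ref{eq-cov-xi0}). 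This Lyapunov equation is where the factor $b(s)b(s)^\top$ in the statement originates.

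Substituting the separable kernel into~(\ref{eq-def-mu}), writing $dM_u:=dY_u-c(u)E[X_u]\,du$, and splitting $\int_0^t=\int_0^s+\int_s^t$, one gets
\begin{align*}
  \mu_{s;t}=E[X_s]+\int_0^s\Phi(s,u;t)P(u;t)c(u)^\top(\sigma(u)\sigma(u)^\top)^{-1}dM_u+P(s;t)\,\rho_{s;t},
\end{align*}
where $\rho_{s;t}:=\int_s^t\Phi(u,s;t)^\top c(u)^\top(\sigma(u)\sigma(u)^\top)^{-1}dM_u$ is precisely the process in the statement. I would then differentiate this identity in $s$, using $\partial_s\Phi(s,u;t)=A(s;t)\Phi(s,u;t)$ in the first integral, $\partial_s\Phi(u,s;t)=-\Phi(u,s;t)A(s;t)$ inside $\rho_{s;t}$, the Lyapunov ODE for $\dot P(s;t)$, and the Leibniz rule at the two moving endpoints. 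Two cancellations occur: the endpoint term $P(s;t)c(s)^\top(\sigma(s)\sigma(s)^\top)^{-1}dM_s$ coming from the upper limit of the first integral is cancelled by the endpoint contribution of $P(s;t)\,d_s\rho_{s;t}$, and the $P(s;t)A(s;t)^\top$ term of $\dot P$ cancels the adjoint drift of $\rho_{s;t}$. What survives is exactly $A(s;t)\bigl(\mu_{s;t}-E[X_s]\bigr)ds+b(s)b(s)^\top\rho_{s;t}\,ds$; since $d_sE[X_s]=a(s)E[X_s]\,ds$ and $A(s;t)-a(s)=b(s)b(s)^\top\phi(s;t)$, adding $E[X_s]$ back recovers the stated equation for $\mu_{s;t}$. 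The equation for $\rho_{s;t}$ is obtained by the same differentiation applied to its definition ($\Phi(u,s;t)$ now differentiated in its second slot, with the lower limit moving), $\rho_{t;t}=0$ being immediate, and the initial condition for $\mu_{s;t}$ by setting $s=0$, where the first integral vanishes and $P(0;t)=V[\xi_{0;t}]$.

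The step I expect to be the main obstacle is justifying this ``differentiation in $s$ under the stochastic integral'': after the split, both the integrand $\mathrm{Cov}(\xi_{s;t},\xi_{u;t})$ and the limits of integration depend on $s$. This is manageable because the kernels $\Phi(\cdot,\cdot;t)$ and $P(\cdot;t)$ are deterministic, continuous, and --- under the standing assumptions~(\ref{eq-assumption-b})--(\ref{eq-assumption-sigma-2}) --- square-integrable on $[0,t]$: the martingale part of $dM$ can be handled by a stochastic Fubini theorem and the absolutely continuous part by the ordinary Leibniz rule, or, alternatively, one first notes that $\rho_{s;t}$ solves a linear stochastic differential equation with integrable coefficients (hence is well defined) and thereafter differentiates only genuinely adapted integrals. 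The remaining ingredients --- the separable form of the $\xi$-covariance, the Lyapunov equation for $P(s;t)$, and $\gamma(s,s;t)=P(s;t)$ --- are elementary consequences of~(\ref{eq-def-xi})--(\ref{eq-cov-xi0}) and~(\ref{eq-def-gamma}).
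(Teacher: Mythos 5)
Your argument is essentially the paper's: both proofs read the Bryson--Frazier equation off the representation (\ref{eq-def-mu}) of Theorem \ref{main-theorem} by exploiting the explicit Ornstein--Uhlenbeck structure of $\xi_{\cdot;t}$ and differentiating in $s$. The only organizational difference is that you factor the kernel as $\gamma(s,u;t)=\Phi(s,u;t)P(u;t)$ for $u\le s$ and $P(s;t)\Phi(u,s;t)^\top$ for $u\ge s$ and route the computation through the Lyapunov equation for $P(s;t)$ and the product $P(s;t)\rho_{s;t}$, whereas the paper writes $\gamma(s,u;t)=\alpha(0,s;t)V[\xi_{0;t}]\alpha(0,u;t)^\top+\int_0^{s\wedge u}\alpha(r,s;t)b(r)b(r)^\top\alpha(r,u;t)^\top dr$ and differentiates that kernel case by case; the two bookkeepings produce the same cancellations, and your concern about differentiating under the stochastic integral is handled identically (deterministic kernels, stochastic Fubini).

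Two discrepancies with the statement as printed are worth flagging, though both originate in the statement rather than in your computation. First, the differentiation you describe for $\rho_{s;t}=\int_s^t\Phi(u,s;t)^\top c(u)^\top(\sigma(u)\sigma(u)^\top)^{-1}dM_u$ yields the adjoint backward equation $d_s\rho_{s;t}=-\{a(s)+b(s)b(s)^\top\phi(s;t)\}^\top\rho_{s;t}\,ds-c(s)^\top(\sigma(s)\sigma(s)^\top)^{-1}dM_s$, not the equation displayed in the proposition (which has the opposite sign and no transpose); note that your own cancellation of the endpoint term and of $P(s;t)A(s;t)^\top\rho_{s;t}$ \emph{requires} the adjoint form, so you should write that corrected equation explicitly rather than assert agreement with the printed one. (The paper's proof has the same gap: it defines $\rho$ by the integral and never checks the displayed ODE.) Second, setting $s=0$ in your identity gives $\mu_{0;t}=E[X_0]+V[\xi_{0;t}]\rho_{0;t}$, i.e.\ the stated initial condition is missing the $E[X_0]$ term unless $E[X_0]=0$. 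With these two corrections made explicit, your proof is complete and correct.
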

\begin{proof}
  The solution of (\ref{eq-def-xi}) is represented as
  \begin{align*}
    \xi_{s;t}=\alpha(0,s;t)\xi_{0;t}+\int_0^s \alpha(r,s;t)b(r)d\tilde{V}_r,
  \end{align*}
  where $\alpha(u,s;t)$ is the solution of 
  \begin{align}
    \label{def-alpha-u-s-t}\frac{\partial}{\partial s}\alpha(u,s;t)=\left\{ a(s) + b(s) b(s)^\top \phi(s; t) \right\}\alpha(u,s;t),~~\alpha(u,u;t)=I.
  \end{align}
  Thus we have
  \begin{align}
    \label{eq-representation-gamma}\begin{split}
      \gamma(s,u;t)=E[\xi_{s;t}\xi_{u;t}^\top]=&\alpha(0,s;t)V[\xi_{0;t}]\alpha(0,u;t)^\top\\
    &+\int_0^{s\wedge u}\alpha(r,s;t)b(r)b(r)^\top \alpha(r,u;t)^\top dr
    \end{split}
  \end{align}
  and
  \begin{align*}
    \frac{\partial}{\partial s}\gamma(s,u;t)=\begin{cases}
      \left\{ a(s) + b(s) b(s)^\top \phi(s; t) \right\}\gamma(s,u;t)&(s<u)\\
      \left\{ a(s) + b(s) b(s)^\top \phi(s; t) \right\}\gamma(s,u;t)+b(s)b(s)^\top \alpha(s,u;t)^\top &(s\geq u)
    \end{cases}.
  \end{align*}
  Therefore, differentiating (\ref{eq-def-mu}) yields
  \begin{align*}
    &d_s\mu_{s;t}=\left\{ a(s) + b(s) b(s)^\top \phi(s; t) \right\}\mu_{s;t}ds-b(s) b(s)^\top \phi(s; t)E[X_s]ds\\
    &+b(s)b(s)^\top\int_s^t  \alpha(s,u;t)^\top c(u)^\top (\sigma(u) \sigma(u)^\top)^{-1} (dY_u-c(u)E[X_u]du) ds.
  \end{align*}
  Now we obtain the desired result by writing 
  \begin{align*}
    \rho_{s;t}=\int_s^t  \alpha(s,u;t)^\top c(u)^\top (\sigma(u) \sigma(u)^\top)^{-1} (dY_u-c(u)E[X_u]du).
  \end{align*}
\end{proof}

Next, we derive the differential equation for \( \gamma(t,t;t)=V[X_t|\mathcal{Y}_t] \). To this end, we define \( \gamma(t) \) as the positive-semidefinite solution of the Riccati equation
\begin{align}
  \label{eq-gamma}
  \begin{split}
    \frac{d}{dt} \gamma(t) &= -\gamma(t) c(t)^\top (\sigma(t) \sigma(t)^\top)^{-1} c(t) \gamma(t) + a(t) \gamma(t) + \gamma(t) a(t)^\top + b(t) b(t)^\top
  \end{split}
\end{align}
with \( \gamma(0) = V[X_0] \). The existence of the solution is guaranteed by Theorems 2.1 and 2.2 in \citet{potter1965matrix}.

To avoid potential singularity issues with \( \gamma(t) \), we introduce \( \gamma^\epsilon(t) \) and \( \xi_{s; t}^\epsilon \) as solutions to the same equations
\begin{align}
  \label{eq-def-gamma-epsilon-1}
  \begin{split}    
    \frac{d}{dt} \gamma^\epsilon(t) &= -\gamma^\epsilon(t) c(t)^\top (\sigma(t) \sigma(t)^\top)^{-1} c(t) \gamma^\epsilon(t) + a(t) \gamma^\epsilon(t) + \gamma^\epsilon(t) a(t)^\top \\
    &+ b(t) b(t)^\top
  \end{split}
\end{align}
and
\begin{align}
  \label{eq-def-xi-epsilon-1}
  d \xi_{s; t}^\epsilon = \left\{ a(s) + b(s) b(s)^\top \phi(s; t) \right\} \xi_{s; t}^\epsilon \, ds + b(s) \, d\tilde{V}_s,
\end{align}
where $\gamma^\epsilon(0) = V[X_0] + \epsilon I_{d_1}$ and \( \xi_{0; t}^\epsilon \) is a Gaussian random variable with mean zero and covariance
\begin{align}
  \label{eq-def-xi-epsilon-2}
  V[\xi_{0; t}^\epsilon] = \left\{(V[X_0] + \epsilon I_{d_1})^{-1} - \phi(0; t) \right\}^{-1}.
\end{align}
By Corollary 1 to Theorem 2.1 in \citet{potter1965matrix}, \( \gamma^\epsilon(t) \) is strictly positive.

\begin{lemma}\label{lemma-gamma-epsilon}
  (1) For every $t\geq0$, $\gamma^\epsilon(s)$ is bounded for $\epsilon \in [0,1]$ and $s \in [0,t]$.\\
  (2) The matrix $\gamma^\epsilon(t)$ converges uniformly to $\gamma(t)$ on any compact set as $\epsilon \to +0$.
\end{lemma}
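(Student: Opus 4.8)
Both parts concern the dependence of the Riccati solution (\ref{eq-def-gamma-epsilon-1}) on the perturbed initial datum $V[X_0]\mapsto V[X_0]+\epsilon I_{d_1}$, so the natural tools are Gronwall estimates; the only care needed is that $a,b,c,\sigma$ are merely measurable subject to (\ref{eq-assumption-b}) and the uniform ellipticity (\ref{eq-assumption-sigma-2}), so all differential inequalities should be read in integral (Carath\'eodory) form. For (1), since $\gamma^\epsilon(s)$ is positive semidefinite for every $\epsilon\in[0,1]$ and $s\ge0$ (strictly positive for $\epsilon>0$ by Potter's corollary, and $\gamma^0=\gamma\succeq0$ by construction), I would control it through its trace. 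Writing $R(s)=c(s)^\top(\sigma(s)\sigma(s)^\top)^{-1}c(s)\succeq 0$ and differentiating along (\ref{eq-def-gamma-epsilon-1}), the Riccati term contributes $-\mathrm{tr}(\gamma^\epsilon R\gamma^\epsilon)=-|R^{1/2}\gamma^\epsilon|^2\le 0$ and can be discarded, while $\mathrm{tr}(a\gamma^\epsilon)\le|a|\,|\gamma^\epsilon|\le|a|\,\mathrm{tr}\,\gamma^\epsilon$ by Cauchy--Schwarz and positive semidefiniteness; hence
\begin{align*}
  \frac{d}{ds}\mathrm{tr}\,\gamma^\epsilon(s)\le 2|a(s)|\,\mathrm{tr}\,\gamma^\epsilon(s)+|b(s)|^2 .
\end{align*}
Since $\mathrm{tr}\,\gamma^\epsilon(0)=\mathrm{tr}\,V[X_0]+\epsilon d_1\le \mathrm{tr}\,V[X_0]+d_1$ for $\epsilon\in[0,1]$, Gronwall's lemma and (\ref{eq-assumption-b}) give $\mathrm{tr}\,\gamma^\epsilon(s)\le\bigl(\mathrm{tr}\,V[X_0]+d_1+\int_0^t|b|^2\bigr)\exp\bigl(2\int_0^t|a|\bigr)=:C_t$ for all $s\in[0,t]$ and $\epsilon\in[0,1]$; with $|\gamma^\epsilon(s)|\le\mathrm{tr}\,\gamma^\epsilon(s)$ this is the asserted bound, and $C_t$ is nondecreasing in $t$. (Conceptually $\gamma^\epsilon$ is the Kalman--Bucy covariance for the system with prior variance $V[X_0]+\epsilon I_{d_1}$, hence dominated by the corresponding Lyapunov/prior covariance; the trace computation just makes this elementary.)

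For (2), fix $T>0$ and set $\Delta^\epsilon(s)=\gamma^\epsilon(s)-\gamma(s)$, so $\Delta^\epsilon(0)=\epsilon I_{d_1}$. Subtracting the two copies of (\ref{eq-def-gamma-epsilon-1}) (equivalently (\ref{eq-gamma})) and using the identity $\gamma^\epsilon R\gamma^\epsilon-\gamma R\gamma=\gamma^\epsilon R\Delta^\epsilon+\Delta^\epsilon R\gamma$ together with symmetry of $R$ and $\gamma$ gives the \emph{linear} matrix ODE
\begin{align*}
  \frac{d}{ds}\Delta^\epsilon(s)=\bigl(a(s)-\gamma^\epsilon(s)R(s)\bigr)\Delta^\epsilon(s)+\Delta^\epsilon(s)\bigl(a(s)-\gamma(s)R(s)\bigr)^\top .
\end{align*}
Taking Frobenius norms and using submultiplicativity, $|\Delta^\epsilon(s)|\le\epsilon\sqrt{d_1}+\int_0^s g(r)\,|\Delta^\epsilon(r)|\,dr$ with $g(r)=2|a(r)|+(|\gamma^\epsilon(r)|+|\gamma(r)|)\,|R(r)|$. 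By part (1), $|\gamma^\epsilon(r)|+|\gamma(r)|\le 2C_T$ on $[0,T]$ for all $\epsilon\in[0,1]$, and by (\ref{eq-assumption-sigma-2}) one has $|R(r)|\le C^{-1}|c(r)|^2$; thus $g$ is dominated on $[0,T]$ by the $\epsilon$-independent integrable function $2|a(r)|+2C_TC^{-1}|c(r)|^2$. Gronwall's inequality then yields
\begin{align*}
  \sup_{0\le s\le t}\bigl|\gamma^\epsilon(s)-\gamma(s)\bigr|\le\epsilon\sqrt{d_1}\,\exp\Bigl(\int_0^T\bigl(2|a|+2C_TC^{-1}|c|^2\bigr)\Bigr)
\end{align*}
for every $t\in[0,T]$, and letting $\epsilon\to+0$ gives uniform convergence of $\gamma^\epsilon(\cdot)$ to $\gamma(\cdot)$ on $[0,T]$, hence on every compact set.

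\emph{Main obstacle.} There is no deep difficulty here; the one point requiring attention is the uniformity in $\epsilon$, and this is secured entirely in part (1) by bounding the drift of the Riccati equation by $\epsilon$-independent integrable functions under the integrability/ellipticity hypotheses. Once part (1) is in hand, part (2) is a routine linearization-and-Gronwall argument.
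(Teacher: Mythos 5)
Your proof is correct and follows essentially the same route as the paper: for (1) you discard the negative semidefinite Riccati term and apply Gronwall, controlling $\mathrm{tr}\,\gamma^\epsilon$ where the paper controls $\max_{|x|=1}x^\top\gamma^\epsilon x$ (a cosmetic difference, both hinging on positive semidefiniteness); for (2) you carry out the linearization-plus-Gronwall argument that the paper dismisses as routine. Your explicit treatment of (2), including the $\epsilon$-independent integrable dominating function via part (1) and the ellipticity bound on $R$, is a faithful and complete filling-in of that omitted step.
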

\begin{proof}
  (1) Take an arbitrary $x \in \mathbb{R}^{d_1}$. Then (\ref{eq-gamma}) yields
  \begin{align*}
    \frac{d}{dt}x^\top \gamma^\epsilon(t) x=&-x^\top\gamma^\epsilon(t)c(t)^\top(\sigma(t)\sigma(t)^\top)^{-1}c(t)\gamma^\epsilon(t)x+2x^\top a(t) \gamma^\epsilon(t)x\\
    &+x^\top b(t)b(t)^\top x\\
    \leq & 2|x^\top a(t) \gamma^\epsilon(t)^\frac{1}{2} \gamma^\epsilon(t)^\frac{1}{2}x|+x^\top b(t)b(t)^\top x\\
    \leq &2\sqrt{x^\top a(t) \gamma^\epsilon(t)a(t)^\top  x}\sqrt{x^\top  \gamma^\epsilon(t) x}+x^\top b(t)b(t)^\top x\\
    \leq &2\|a(t)\|_2 \|\gamma^\epsilon(t)\|_2|x|^2+\|b(t)\|_2^2 |x|^2,
  \end{align*}
  where $\|\cdot\|_2$ is the matrix 2-norm, and we used
  \begin{align*}
    x^\top a(t) \gamma^\epsilon(t)a(t)^\top x&=|\gamma^\epsilon(t)^\frac{1}{2}a(t)^\top x|^2\leq \|\gamma^\epsilon(t)\|_2\|a(t)\|_2^2|x|^2.
  \end{align*}
  From this, it follows that
  \begin{align*}
    \|\gamma^\epsilon(t)\|_2=\max_{|x|=1}x^\top \gamma^\epsilon(t) x\leq \|\gamma^\epsilon(0)\|_2+\int_0^t \{2\|a(s)\|_2\|\gamma^\epsilon(s)\|_2+\|b(s)\|_2^2\}ds.
  \end{align*}
  Therefore, by the Gronwall's lemma, we obtain
  \begin{align*}
    \|\gamma^\epsilon(t)\|_2\leq & \|\gamma^\epsilon(0)\|_2\exp\left( 2\int_0^t\|a(r)\|_2dr \right)\\
    &+\int_0^t \exp\left( 2\int_s^t\|a(r)\|_2dr \right)\|b(s)\|_2^2ds,
  \end{align*}
  which leads to the desired result.\\
  Statement (2) can be proven routinely using the result of (1) along with Gronwall's lemma.
\end{proof}

\begin{proposition}\label{prop-gamma}
  It holds that $\gamma(t)=\gamma(t,t;t).$
\end{proposition}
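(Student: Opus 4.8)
The plan is to fix the terminal time $t$ and compare, as functions of the running variable $s\in[0,t]$, the covariance $M(s):=V[\xi_{s;t}]$ of the process in Theorem~\ref{main-theorem} with the matrix $N(s):=(\gamma(s)^{-1}-\phi(s;t))^{-1}$: at $s=t$ one has $\phi(t;t)=0$ and, by~(\ref{eq-def-gamma}), $M(t)=\mathrm{Cov}(X_t,X_t\mid\mathcal{Y}_t)=\gamma(t,t;t)$, so the equality $M(t)=N(t)=\gamma(t)$ is exactly the assertion. Since $\gamma(0)=V[X_0]$ may be singular, I would first run this comparison for the regularized objects $\xi^\epsilon_{\cdot;t}$ and $\gamma^\epsilon(\cdot)$ of~(\ref{eq-def-gamma-epsilon-1})--(\ref{eq-def-xi-epsilon-2}), establish $V[\xi^\epsilon_{t;t}]=\gamma^\epsilon(t)$, and then let $\epsilon\to0$ using Lemma~\ref{lemma-gamma-epsilon}.

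Writing $\psi(s):=a(s)+b(s)b(s)^\top\phi(s;t)$, the representation $\xi^\epsilon_{s;t}=\alpha(0,s;t)\xi^\epsilon_{0;t}+\int_0^s\alpha(r,s;t)b(r)\,d\tilde V_r$ used in the proof of the Bryson--Frazier proposition shows that $M^\epsilon(s):=V[\xi^\epsilon_{s;t}]$ solves, for a.e.\ $s$ (the coefficients being merely integrable), the Lyapunov equation $\dot M^\epsilon=\psi(s)M^\epsilon+M^\epsilon\psi(s)^\top+b(s)b(s)^\top$ with $M^\epsilon(0)=\{(V[X_0]+\epsilon I_{d_1})^{-1}-\phi(0;t)\}^{-1}$ by~(\ref{eq-def-xi-epsilon-2}). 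Set $D^\epsilon(s):=\gamma^\epsilon(s)^{-1}-\phi(s;t)$ and $N^\epsilon(s):=D^\epsilon(s)^{-1}$; this is well defined on all of $[0,t]$ because $\gamma^\epsilon(s)$ is strictly positive (Corollary~1 to Theorem~2.1 in \citet{potter1965matrix}) while $\phi(s;t)$ is negative semidefinite, so $D^\epsilon(s)$ is positive definite as the sum of the positive-definite matrix $\gamma^\epsilon(s)^{-1}$ and the positive-semidefinite matrix $-\phi(s;t)$; moreover $N^\epsilon(0)=M^\epsilon(0)$.

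The core of the argument is to differentiate $N^\epsilon$. From $\frac{d}{ds}A^{-1}=-A^{-1}\dot A\,A^{-1}$ and~(\ref{eq-def-gamma-epsilon-1}) one gets $\frac{d}{ds}\gamma^\epsilon(s)^{-1}=c(s)^\top(\sigma(s)\sigma(s)^\top)^{-1}c(s)-\gamma^\epsilon(s)^{-1}a(s)-a(s)^\top\gamma^\epsilon(s)^{-1}-\gamma^\epsilon(s)^{-1}b(s)b(s)^\top\gamma^\epsilon(s)^{-1}$; subtracting~(\ref{eq-def-phi}), the terms $c(s)^\top(\sigma(s)\sigma(s)^\top)^{-1}c(s)$ cancel, and substituting $\gamma^\epsilon(s)^{-1}=D^\epsilon(s)+\phi(s;t)$ to expand the quadratic pieces (using symmetry of $\phi$ and of $b(s)b(s)^\top$) yields $\dot D^\epsilon=-D^\epsilon\psi-\psi^\top D^\epsilon-D^\epsilon bb^\top D^\epsilon$. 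Inverting, $\dot N^\epsilon=-N^\epsilon\dot D^\epsilon N^\epsilon=\psi N^\epsilon+N^\epsilon\psi^\top+bb^\top$ --- precisely the Lyapunov equation solved by $M^\epsilon$. Since $M^\epsilon$ and $N^\epsilon$ satisfy the same linear ODE with the same initial value, uniqueness (Gronwall's lemma) forces $M^\epsilon\equiv N^\epsilon$ on $[0,t]$, and evaluating at $s=t$, where $\phi(t;t)=0$, gives $V[\xi^\epsilon_{t;t}]=N^\epsilon(t)=\gamma^\epsilon(t)$.

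Finally I would let $\epsilon\to0^+$. By Lemma~\ref{lemma-gamma-epsilon}(2), $\gamma^\epsilon(t)\to\gamma(t)$. For the other side, the identity $(B^{-1}-C)^{-1}=B^{1/2}(I_{d_1}-B^{1/2}CB^{1/2})^{-1}B^{1/2}$ with $B=V[X_0]+\epsilon I_{d_1}$ and $C=\phi(0;t)$, combined with continuity of the matrix square root and of inversion near the positive-definite matrix $I_{d_1}-V[X_0]^{1/2}\phi(0;t)V[X_0]^{1/2}$, shows $M^\epsilon(0)\to V[\xi_{0;t}]$; since $M^\epsilon$ and $M(s):=V[\xi_{s;t}]$ solve the same linear ODE, Gronwall's lemma gives $M^\epsilon(t)\to M(t)=V[\xi_{t;t}]=\gamma(t,t;t)$, whence $\gamma(t,t;t)=\gamma(t)$. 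I expect the main obstacle to be the algebraic identity $\dot D^\epsilon=-D^\epsilon\psi-\psi^\top D^\epsilon-D^\epsilon bb^\top D^\epsilon$, which relies on the precise ``dual'' structure of the two Riccati equations~(\ref{eq-def-phi}) and~(\ref{eq-def-gamma-epsilon-1}); a secondary point needing care is that the $\epsilon$-regularization keeps $D^\epsilon(s)$ invertible throughout $[0,t]$ and that all limits as $\epsilon\to0$ are uniform on $[0,t]$.
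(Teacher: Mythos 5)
Your proposal is correct and follows essentially the same route as the paper: both identify $\bigl(\gamma(s)^{-1}-\phi(s;t)\bigr)^{-1}$ with $V[\xi_{s;t}]$ by showing they satisfy the same Lyapunov equation (the paper's (\ref{eq-derivative-w})) with matching initial value $\{V[X_0]^{-1}-\phi(0;t)\}^{-1}=V[\xi_{0;t}]$, then evaluate at $s=t$ where $\phi(t;t)=0$, handling possible singularity of $\gamma$ via the $\epsilon$-regularization and Lemma~\ref{lemma-gamma-epsilon}. The only differences are organizational (you regularize from the outset and invoke Gronwall for uniqueness, whereas the paper first treats the nonsingular case and writes the explicit variation-of-constants solution (\ref{eq-expression-w})).
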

\begin{proof}
  First, assume that $\gamma(t)$ is strictly positive for every $t\geq 0$. Then we have 
  \begin{align*}
    \frac{d}{ds}\left( \gamma(s)^{-1} \right)
    =&-\gamma(s)^{-1} \frac{d}{ds}\gamma(s) \gamma(s)^{-1}\\
    =&-\gamma(s)^{-1}b(s)b(s)^\top \gamma(s)^{-1}-a(s)^\top \gamma(s)^{-1}-\gamma(s)^{-1}a(s)\\
    &+c(s)^\top (\sigma(s)\sigma(s)^\top)^{-1}c(s).
  \end{align*}
  This means that $\gamma(s)^{-1}$ satisfies the same equation (\ref{eq-def-phi}) as $\phi(s;t)$. Hence, if we set $z(s;t)=\gamma(s)^{-1}-\phi(s;t)$, then it follows that
  \begin{align*}
    &\frac{d}{ds}z(s;t)
    =-\left\{ \gamma(s)^{-1}b(s)b(s)^\top \gamma(s)^{-1}
    -\phi(s;t)^{-1}b(s)b(s)^\top \phi(s;t)^{-1} \right\}\\
    &-a(s)^\top \left\{ \gamma(s)^{-1}-\phi(s;t) \right\} -\left\{ \gamma(s)^{-1}-\phi(s;t) \right\}a(s)\\
    =&-z(s;t)b(s)b(s)^\top z(s;t)-\left\{ a(s)^\top+\phi(s;t)b(s)b(s)^\top \right\}z(s;t)\\
    &-z(s;t)\left\{ a(s)+b(s)b(s)^\top \phi(s;t) \right\}.
  \end{align*}
  Furthermore, if we set $w(s;t)=z(s;t)^{-1}$ (this is well-defined since $\phi(s;t)$ is negative-semidefinite), we have
  \begin{align}
    \label{eq-derivative-w}\begin{split}
      \frac{d}{ds}w(s;t)=&-z(s;t)^{-1} \frac{d}{ds}z(s;t) z(s;t)^{-1}\\
    =&b(s)b(s)^\top+w(s;t)\left\{ a(s)^\top+\phi(s;t)b(s)b(s)^\top \right\}\\
    &+\left\{ a(s)+b(s)b(s)^\top\phi(s;t) \right\}w(s;t)
    \end{split}
  \end{align}
  Therefore, $w(s;t)$ can be written as
  \begin{align}
    \label{eq-expression-w}\begin{split}
      w(s;t)=&\alpha(0,s;t)V[\xi_{0;t}] \alpha(0,u;t)+\int_0^s \alpha(u,s;t)b(u) b(u)^\top\alpha(u,s;t)^\top du.
    \end{split}   
  \end{align}
  where $\alpha$ is defined in (\ref{def-alpha-u-s-t}). Here, we used $\gamma(0)=V[X_0]$ and (\ref{eq-cov-xi0}) to establish $w(0;s)=(V[X_0]^{-1}-\phi(0;t))^{-1}=V[\xi_{0;t}].$

  Hence, it follows immediately from (\ref{eq-representation-gamma}) and (\ref{eq-expression-w}) that \(\gamma(s,s;t)= V[\xi_{s; t}] = w(s; t) \). In particular, since
  \begin{align}
    \label{eq-w-inverse-formula}
    w(s; t) = \{\gamma(s)^{-1} - \phi(s; t)\}^{-1}
  \end{align}
  and \( \phi(t; t) = 0 \), we have \( w(t; t) = \gamma(t) \), which gives the desired result.
  
  In the case where \( \gamma(t) \) becomes singular for some \( t \), we replace \( V[X_0] \) with \( V[X_0] + \epsilon I_{d_1} \) for \( \epsilon > 0 \), yielding $\gamma^\epsilon(t) = V[\xi_{t; t}^\epsilon]$, since \( \gamma^\epsilon(t) \) is strictly positive. Then let \( \epsilon \to 0 \) to obtain the conclusion.  
\end{proof}
\begin{remark}\label{remark-inverse}
  The proof reveals that $\psi(s;t)$ can be expressed as \begin{align*}
    \psi(s;t) = \left\{\gamma(s)^{-1} - \gamma(s,s;t)^{-1}\right\}^{-1},
  \end{align*}
  provided that $\gamma(s)$, $\gamma(s,s;t)$ and $\gamma(s)^{-1} - \gamma(s,s;t)^{-1}$ are non-singular. This observation explains why the inverses $\gamma(s)^{-1}$ and $\gamma(s,s;t)^{-1}$ inevitably appear when one attempts to derive the BF smoother within the frameworks of the Kalman–Bucy filter and the RTS smoother.
\end{remark}

\begin{proposition}\label{prop-expression-cov-xi}
  Let $0\leq u \leq s\leq t$. If $\gamma(r)$ is non-singular for every $r\in [0,t]$, it holds that 
  \begin{align*}
    d_u\gamma(s,u;t)=\gamma(s,u;t)\left\{ a(u)^\top+\gamma(u)^{-1}b(u)b(u)^\top \right\}du.
  \end{align*}
\end{proposition}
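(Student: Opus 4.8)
The plan is to differentiate, in the lower endpoint $u$, the explicit representation of $\gamma(s,u;t)$ in terms of the fundamental solution $\alpha$ obtained in the proof of the Bryson--Frazier smoother. Since $u\le s$, equation~(\ref{eq-representation-gamma}) reads
\begin{align*}
  \gamma(s,u;t)=\alpha(0,s;t)V[\xi_{0;t}]\alpha(0,u;t)^\top+\int_0^{u}\alpha(r,s;t)b(r)b(r)^\top\alpha(r,u;t)^\top\,dr,
\end{align*}
and the semigroup identity $\alpha(r,s;t)=\alpha(u,s;t)\alpha(r,u;t)$ (valid for $r\le u\le s$ by uniqueness of the ODE~(\ref{def-alpha-u-s-t})) lets me factor $\alpha(u,s;t)$ out on the left, giving $\gamma(s,u;t)=\alpha(u,s;t)\gamma(u,u;t)$. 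I record this for later use.

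First I would differentiate the displayed expression with respect to $u$. The Leibniz boundary term produced by the integral is $\alpha(u,s;t)b(u)b(u)^\top\alpha(u,u;t)^\top=\alpha(u,s;t)b(u)b(u)^\top$, using $\alpha(u,u;t)=I$. For all the remaining pieces I use that, as a function of its second argument, $\alpha$ solves~(\ref{def-alpha-u-s-t}), so $\partial_u\alpha(\cdot,u;t)^\top=\alpha(\cdot,u;t)^\top\{a(u)^\top+\phi(u;t)b(u)b(u)^\top\}$ (recall $\phi$ is symmetric). Collecting the terms in which this factor appears on the right reassembles $\gamma(s,u;t)$, and one arrives at
\begin{align*}
  d_u\gamma(s,u;t)=\gamma(s,u;t)\bigl\{a(u)^\top+\phi(u;t)b(u)b(u)^\top\bigr\}\,du+\alpha(u,s;t)b(u)b(u)^\top\,du.
\end{align*}

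The final step is to eliminate the stray term $\alpha(u,s;t)b(u)b(u)^\top$. From the first paragraph $\gamma(s,u;t)=\alpha(u,s;t)\gamma(u,u;t)$, and combining $\gamma(u,u;t)=w(u;t)$ with~(\ref{eq-w-inverse-formula}) gives $\gamma(u,u;t)^{-1}=\gamma(u)^{-1}-\phi(u;t)$, hence $\alpha(u,s;t)=\gamma(s,u;t)\{\gamma(u)^{-1}-\phi(u;t)\}$. Substituting this into the differential, the two terms carrying $\phi(u;t)b(u)b(u)^\top$ combine and leave exactly $\gamma(s,u;t)\{a(u)^\top+\gamma(u)^{-1}b(u)b(u)^\top\}\,du$, as claimed. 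The only genuine subtlety is the invertibility needed to pass from $\gamma(s,u;t)=\alpha(u,s;t)\gamma(u,u;t)$ to the formula for $\alpha(u,s;t)$: one needs $\gamma(u,u;t)$ nonsingular for every $u\in[0,t]$. But under the standing hypothesis that $\gamma(r)$ is nonsingular, $\gamma(u,u;t)^{-1}=\gamma(u)^{-1}-\phi(u;t)$ is the sum of the positive definite matrix $\gamma(u)^{-1}$ and the positive semidefinite matrix $-\phi(u;t)$, hence positive definite, so the inversion is automatic and no extra assumption is required. The remaining manipulations—differentiation under the integral sign and the elementary properties of the transition matrix $\alpha$—are routine.
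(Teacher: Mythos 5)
Your argument is correct and is essentially the paper's own proof reorganized: both rest on the factorization $\gamma(s,u;t)=\alpha(u,s;t)\,\gamma(u,u;t)$ (the paper's (\ref{eq-cov-xi-2})) together with the identity $\gamma(u,u;t)^{-1}=\gamma(u)^{-1}-\phi(u;t)$ from (\ref{eq-w-inverse-formula}), the only difference being that you differentiate the integral representation (\ref{eq-representation-gamma}) and factor at the end, whereas the paper factors first and invokes the ODE (\ref{eq-derivative-w}) for $w$. Your closing remark on the automatic invertibility of $\gamma(u,u;t)$ is a correct and worthwhile observation.
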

\begin{proof}
  Due to (\ref{eq-representation-gamma}) and $w(s;t)=\gamma(s,s;t)$, we can express
  \begin{align}
    \gamma(s,u;t)\label{eq-cov-xi-2}=&\alpha(u,s;t)w(u;t),
  \end{align} 
  where $w(u;t)=V[\xi_{u;t}]$ is given by (\ref{eq-expression-w}). Thus, it follows from (\ref{eq-derivative-w}) and (\ref{eq-w-inverse-formula}) that
  \begin{align*}
    &d_u\gamma(s,u;t)\\
    =&\alpha(u,s;t) \left[ -\left\{ a(u)+b(u)b(u)^\top \phi(u;t)\right\}w(u;t)+\frac{\partial}{\partial u}w(u;t) \right]du\\
    =&\alpha(u,s;t)\\
    &\times \left[ -\left\{ a(u)+b(u)b(u)^\top \phi(u;t)\right\}w(u;t)+b(u)b(u)^\top\right.\\
    &+w(u;t)\left\{ a(u)^\top+\phi(u;t)b(u)b(u)^\top \right\}\\
    &+\left.\left\{ a(u)+b(u)b(u)^\top\phi(u;t) \right\}w(u;t)\right]du\\
    =&\alpha(u,s;t)w(u,t)\\
    &\times \left\{ w(u;t)^{-1}b(u)b(u)^\top+a(u)^\top+\phi(u;t)b(u)b(u)^\top \right\}du\\
    =&\mathrm{Cov}(\xi_{s;t},\xi_{u;t}) \left[ \left\{ \gamma(u)^{-1}-\phi(u;t) \right\}b(u)b(u)^\top+a(u)^\top+\phi(u;t)b(u)b(u)^\top \right]du\\
    =&\gamma(s,u;t)\left\{ a(u)^\top+\gamma(u)^{-1}b(u)b(u)^\top \right\}du,
  \end{align*}
  which implies the conclusion. 
\end{proof}

\begin{proposition}\label{prop-equation-xi-t-s}
  For every $s\geq0$, it holds that 
  \begin{align*}
    d_t\gamma(t,s;t)=\left\{ a(t)-\gamma(t)c(t)^\top(\sigma(t)\sigma(t)^\top)^{-1}c(t)\right\}\gamma(t,s;t)dt.
  \end{align*}
\end{proposition}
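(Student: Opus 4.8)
The plan is to differentiate the closed form $\gamma(t,s;t)=\alpha(s,t;t)\,w(s;t)$ with respect to the horizon $t$. Here $\alpha(\cdot,\cdot;t)$ is the transition matrix of (\ref{def-alpha-u-s-t}) and $w(s;t)=V[\xi_{s;t}]=\gamma(s,s;t)=\{\gamma(s)^{-1}-\phi(s;t)\}^{-1}$, exactly as in the proof of Proposition~\ref{prop-gamma}; the factorization $\gamma(t,s;t)=\alpha(s,t;t)w(s;t)$ is (\ref{eq-cov-xi-2}) (with first argument $t$ and second argument $s$). The subtle point is that $t$ enters this expression in three ways: through the upper endpoint of $\alpha(s,\cdot;t)$, and through the parameter $\phi(\cdot;t)$ inside both $\alpha$ and $w$. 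Viewing $\gamma(t,s;t)$ as the value at $\tau=t$ of $\tau\mapsto\alpha(s,\tau;t)w(s;t)$, the total $t$-derivative splits into an endpoint contribution, which by (\ref{def-alpha-u-s-t}) equals $\{a(t)+b(t)b(t)^\top\phi(t;t)\}\alpha(s,t;t)w(s;t)=a(t)\gamma(t,s;t)$ because $\phi(t;t)=0$, plus a parameter contribution treated below. The first genuine task is therefore to compute $\partial_t\phi(r;t)$.

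To obtain $\partial_t\phi(r;t)$, I would differentiate the Riccati equation (\ref{eq-def-phi}) in $t$: the matrix $\Psi(r,t):=\partial_t\phi(r;t)$ then satisfies the linear (Lyapunov-type) equation $\partial_r\Psi=-\{a(r)+b(r)b(r)^\top\phi(r;t)\}^\top\Psi-\Psi\{a(r)+b(r)b(r)^\top\phi(r;t)\}$ in $r$. Differentiating the boundary relation $\phi(t;t)=0$ in $t$ gives $\partial_r\phi(r;t)|_{r=t}+\partial_t\phi(r;t)|_{r=t}=0$, whose first term is $c(t)^\top(\sigma(t)\sigma(t)^\top)^{-1}c(t)$ by (\ref{eq-def-phi}), so $\Psi(t,t)=-c(t)^\top(\sigma(t)\sigma(t)^\top)^{-1}c(t)$. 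Solving the linear equation backward from $r=t$, using that $\partial_r\alpha(r,t;t)=-\alpha(r,t;t)\{a(r)+b(r)b(r)^\top\phi(r;t)\}$, yields $\partial_t\phi(r;t)=-\alpha(r,t;t)^\top c(t)^\top(\sigma(t)\sigma(t)^\top)^{-1}c(t)\,\alpha(r,t;t)$.

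Next I would assemble the parameter contribution $[\partial_t\alpha(s,\tau;t)]_{\tau=t}\,w(s;t)+\alpha(s,t;t)\,\partial_tw(s;t)$, where $\partial_t\alpha(s,\tau;t)$ now means the derivative in the parameter $t$ only. From $w(s;t)=\{\gamma(s)^{-1}-\phi(s;t)\}^{-1}$ one gets $\partial_tw(s;t)=w(s;t)\,\partial_t\phi(s;t)\,w(s;t)=-\gamma(s,t;t)\,c(t)^\top(\sigma(t)\sigma(t)^\top)^{-1}c(t)\,\gamma(t,s;t)$, using $\alpha(s,t;t)w(s;t)=\gamma(t,s;t)$ and $w(s;t)\alpha(s,t;t)^\top=\gamma(s,t;t)$. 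By variation of parameters, $[\partial_t\alpha(s,\tau;t)]_{\tau=t}=\int_s^t\alpha(\rho,t;t)b(\rho)b(\rho)^\top\partial_t\phi(\rho;t)\,\alpha(s,\rho;t)\,d\rho$; inserting the formula for $\partial_t\phi$ and collapsing with the semigroup identity $\alpha(\rho,t;t)\alpha(s,\rho;t)=\alpha(s,t;t)$ turns this into $-\big(\int_s^t\alpha(\rho,t;t)b(\rho)b(\rho)^\top\alpha(\rho,t;t)^\top\,d\rho\big)c(t)^\top(\sigma(t)\sigma(t)^\top)^{-1}c(t)\,\alpha(s,t;t)$.

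Collecting all terms gives $d_t\gamma(t,s;t)=a(t)\gamma(t,s;t)-M(t)\,c(t)^\top(\sigma(t)\sigma(t)^\top)^{-1}c(t)\,\gamma(t,s;t)$ with $M(t)=\int_s^t\alpha(\rho,t;t)b(\rho)b(\rho)^\top\alpha(\rho,t;t)^\top\,d\rho+\alpha(s,t;t)w(s;t)\alpha(s,t;t)^\top$. The closing step is the identity $M(t)=w(t;t)=\gamma(t)$: substitute the explicit representation (\ref{eq-expression-w}) of $w(s;t)$, use the semigroup property of $\alpha$ to telescope the two pieces of $M(t)$ into the corresponding representation of $w(t;t)$, and then invoke $w(t;t)=\gamma(t)$ (that is, (\ref{eq-w-inverse-formula}) with $\phi(t;t)=0$, together with Proposition~\ref{prop-gamma}). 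This produces exactly $d_t\gamma(t,s;t)=\{a(t)-\gamma(t)c(t)^\top(\sigma(t)\sigma(t)^\top)^{-1}c(t)\}\gamma(t,s;t)\,dt$. The non-singularity of $\gamma$ enters only through the expression $w(s;t)=\{\gamma(s)^{-1}-\phi(s;t)\}^{-1}$; for the general case I would run the same computation with the regularized objects $\gamma^\epsilon$, $\xi^\epsilon_{\cdot;t}$ of Lemma~\ref{lemma-gamma-epsilon} and let $\epsilon\to0$, just as at the end of the proof of Proposition~\ref{prop-gamma}. I expect the main obstacle to be organizational rather than conceptual: keeping the transition-matrix bookkeeping straight, in particular the variation-of-parameters term and the telescoping identity $M(t)=\gamma(t)$.
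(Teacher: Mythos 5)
Your argument is correct, but it follows a genuinely different route from the paper's. The paper combines Propositions \ref{prop-gamma} and \ref{prop-expression-cov-xi} to write $\gamma(t,s;t)=\gamma(t)\beta(s,t)^\top$, where $\beta$ is the transition matrix generated by $a(\cdot)+b(\cdot)b(\cdot)^\top\gamma(\cdot)^{-1}$; the crucial point is that this generator has no hidden dependence on the horizon $t$, so all of the $t$-dependence sits in the factor $\gamma(t)$ and in the endpoint of $\beta$, and the forward Riccati equation (\ref{eq-gamma}) immediately produces the term $-\gamma(t)c(t)^\top(\sigma(t)\sigma(t)^\top)^{-1}c(t)\gamma(t,s;t)$ in a two-line computation. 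You instead stay with the horizon-dependent representation $\alpha(s,t;t)w(s;t)$ and pay for it by having to compute the sensitivity $\partial_t\phi(r;t)$ of the backward Riccati solution (your Lyapunov-equation argument and the resulting identity $\partial_t\phi(r;t)=-\alpha(r,t;t)^\top c(t)^\top(\sigma(t)\sigma(t)^\top)^{-1}c(t)\alpha(r,t;t)$ are correct, and the latter is a nice identity in its own right), plus a variation-of-parameters term and the telescoping identity $M(t)=w(t;t)=\gamma(t)$ --- which, note, is just the propagated-variance statement $V[\xi_{t;t}]=\gamma(t)$, i.e.\ Proposition \ref{prop-gamma}, so you could cite it rather than re-derive it. The trade-off: your route is longer and implicitly requires differentiability of the Riccati flow $t\mapsto\phi(r;t)$ in its terminal time (standard, but an extra regularity point the paper's route avoids entirely), while the paper's route requires having already established Proposition \ref{prop-expression-cov-xi}. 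Both approaches invoke non-singularity of $\gamma$ only through $\{\gamma(s)^{-1}-\phi(s;t)\}^{-1}$ and dispose of it by the same $\epsilon$-regularization of Lemma \ref{lemma-gamma-epsilon}.
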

\begin{proof}
  First, assume that $\gamma(t)$ is non-singular for every $t\geq 0$. In this case, by Propositions \ref{prop-gamma} and \ref{prop-expression-cov-xi}, we can write
  \begin{align*}
    \gamma(t,s;t)=\mathrm{Cov}(\xi_{t;t},\xi_{s;t})=\gamma(t)\beta(s,t)^\top.
  \end{align*}
  where $\beta(u,s)$ is the solution of
  \begin{align}
    \label{eq-def-beta}\frac{\partial}{\partial s}\beta(u,s)=\left\{ a(r)+b(r)b(r)^\top\gamma(r)^{-1} \right\}\beta(u,s),~~\beta(u,u)=I.
  \end{align}
  Hence, it holds
  \begin{align*}
    &d_t\gamma(t,s;t)\\
    =&\left[ \frac{d}{dt}\gamma (t)-\gamma(t)\{a(t)^\top+\gamma(t)^{-1}b(t)b(t)^\top\}\right]\beta(s,t)^\top dt\\
    =&\left\{ -\gamma(t)c(t)^\top(\sigma(t)\sigma(t)^\top)^{-1}c(t)\gamma(t)+a(t)\gamma(t)\right\} \beta(s,t)^\top dt\\
    =&\left\{ -\gamma(t)c(t)^\top(\sigma(t)\sigma(t)^\top)^{-1}c(t)+a(t)\right\}\gamma(t,s;t)dt,
  \end{align*}
  which implies the conclusion. For the case where \( \gamma(t) \) is singular, we have
  \begin{align*}
    d_t\mathrm{Cov}(\xi_{t;t}^\epsilon,\xi_{s;t}^\epsilon)
    =\left\{ -\gamma^\epsilon(t)c(t)^\top(\sigma(t)\sigma(t)^\top)^{-1}c(t)+a(t)\right\}\mathrm{Cov}(\xi_{t;t}^\epsilon,\xi_{s;t}^\epsilon)dt
  \end{align*}
  and the uniform convergence of \( \gamma^\epsilon \) on any compact set yields the desired result.
\end{proof}

The previous proposition provides the well-known results of Kalman-Bucy filter and Rauch-Tung-Striebel smoother.

\begin{proposition}{\bf (Kalman-Bucy filter)}\\
  The process $\{\mu_{t,t}\}_{t\geq 0}$ satisfies the stochastic differential equation
  \begin{align*}
    d\mu_{t,t}=a(t)\mu_{t,t}dt+\gamma(t) c(t)^\top(\sigma(t)\sigma(t)^\top)^{-1}\{dY_t-c(t)\mu_{t,t}dt\}.
  \end{align*} 
\end{proposition}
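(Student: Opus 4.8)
The plan is to obtain the Kalman--Bucy equation by differentiating, with respect to $t$, the representation of $\mu_{t,t}=E[X_t\mid\mathcal{Y}_t]$ supplied by Theorem~\ref{main-theorem}:
\begin{align*}
  \mu_{t,t}=E[X_t]+\int_0^t\gamma(t,u;t)\,c(u)^\top(\sigma(u)\sigma(u)^\top)^{-1}\bigl(dY_u-c(u)E[X_u]\,du\bigr).
\end{align*}
The obstacle is that $t$ appears both in the upper limit and inside the integrand $\gamma(t,u;t)$, so a formal Leibniz rule for this stochastic integral is not immediately legitimate.

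To bypass this I would first factor the kernel. By Proposition~\ref{prop-equation-xi-t-s} the map $t\mapsto\gamma(t,u;t)$ (with $u$ fixed) solves the linear matrix ODE with coefficient $A(t):=a(t)-\gamma(t)c(t)^\top(\sigma(t)\sigma(t)^\top)^{-1}c(t)$, and by Proposition~\ref{prop-gamma} its value at $t=u$ is $\gamma(u,u;u)=\gamma(u)$. Hence, letting $\Phi$ be the (everywhere invertible) fundamental solution of $\dot\Phi(t)=A(t)\Phi(t)$ with $\Phi(0)=I_{d_1}$, uniqueness for linear ODEs gives $\gamma(t,u;t)=\Phi(t)\Phi(u)^{-1}\gamma(u)$ for $0\le u\le t$. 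Therefore
\begin{align*}
  \mu_{t,t}-E[X_t]=\Phi(t)N_t,\qquad N_t:=\int_0^t\Phi(u)^{-1}\gamma(u)\,c(u)^\top(\sigma(u)\sigma(u)^\top)^{-1}\bigl(dY_u-c(u)E[X_u]\,du\bigr),
\end{align*}
and now $N$ is a bona fide It\^o process whose integrand is deterministic, so $dN_t=\Phi(t)^{-1}\gamma(t)c(t)^\top(\sigma(t)\sigma(t)^\top)^{-1}\bigl(dY_t-c(t)E[X_t]\,dt\bigr)$.

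Next I would apply the product rule to $\Phi(t)N_t$; as $\Phi$ is of finite variation there is no cross-variation term, so $d(\Phi(t)N_t)=A(t)\Phi(t)N_t\,dt+\Phi(t)\,dN_t$. Adding $dE[X_t]=a(t)E[X_t]\,dt$ (from taking expectations in~(\ref{eq-linear-system-X})) and substituting back $\Phi(t)N_t=\mu_{t,t}-E[X_t]$ together with $\Phi(t)\,dN_t=\gamma(t)c(t)^\top(\sigma(t)\sigma(t)^\top)^{-1}(dY_t-c(t)E[X_t]\,dt)$, the $E[X_t]$ contributions recombine with the remaining drift to form $a(t)\mu_{t,t}\,dt-\gamma(t)c(t)^\top(\sigma(t)\sigma(t)^\top)^{-1}c(t)\mu_{t,t}\,dt$, yielding exactly
\begin{align*}
  d\mu_{t,t}=a(t)\mu_{t,t}\,dt+\gamma(t)c(t)^\top(\sigma(t)\sigma(t)^\top)^{-1}\bigl(dY_t-c(t)\mu_{t,t}\,dt\bigr).
\end{align*}

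The main difficulty is precisely the rigorous differentiation of the stochastic integral with a time-dependent kernel and a moving endpoint; the factorization $\gamma(t,u;t)=\Phi(t)\Phi(u)^{-1}\gamma(u)$ is the device that turns this into a routine use of the product rule. A minor point is that Proposition~\ref{prop-equation-xi-t-s} is first established under non-singularity of $\gamma$ and then extended via the $\epsilon$-regularization, but its final statement is unconditional, so the factorization—and hence the whole argument—is valid without additional assumptions.
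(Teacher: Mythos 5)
Your proof is correct, and it rests on the same two pillars as the paper's: the evolution equation for $t\mapsto\gamma(t,u;t)$ from Proposition~\ref{prop-equation-xi-t-s} together with the initial condition $\gamma(u,u;u)=\gamma(u)$ from Proposition~\ref{prop-gamma}. Where you differ is in the device used to legitimize differentiating the stochastic integral with a moving kernel. The paper substitutes the \emph{integral} form $\gamma(t,u;t)=\gamma(u)+\int_u^t\{a(s)-\gamma(s)c(s)^\top(\sigma(s)\sigma(s)^\top)^{-1}c(s)\}\gamma(s,u;s)\,ds$ into the representation of $\tilde\mu_{t,t}$ and then interchanges the $ds$- and $d\tilde Y_u$-integrals (a stochastic Fubini step, applied without comment), after which the inner integral is recognized as $\tilde\mu_{s,s}$. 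You instead \emph{solve} the linear ODE via its fundamental solution, factor the kernel as $\gamma(t,u;t)=\Phi(t)\Phi(u)^{-1}\gamma(u)$, and reduce everything to the product rule for $\Phi(t)N_t$ with $N$ an It\^o process having a deterministic integrand. Your route avoids the stochastic Fubini interchange entirely and makes the differentiation a one-line computation, at the cost of introducing $\Phi$ and checking its invertibility (which is immediate here, since $a$ is locally integrable, $\gamma$ is locally bounded by Lemma~\ref{lemma-gamma-epsilon}, and $(\sigma\sigma^\top)^{-1}$ is bounded by~(\ref{eq-assumption-sigma-2})). Both arguments are unconditional because Propositions~\ref{prop-gamma} and~\ref{prop-equation-xi-t-s} are stated without non-singularity hypotheses, as you note. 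The final algebra in your substitution step checks out.
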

\begin{proof}
  Set
  \begin{align}
    \label{eq3-12}\tilde{\mu}_{t;t}=\mu_{t;t}-E[X_t],~~\tilde{Y}_t=Y_t-\int_0^t c(u)E[X_u]du.
  \end{align}
  Then according to Proposition \ref{prop-equation-xi-t-s} and (\ref{eq-def-mu}), we have
  \begin{align*}
    \tilde{\mu}_{t,t}=&\int_0^t\gamma(t,u;t)c(u)^\top(\sigma(u)\sigma(u)^\top)^{-1}d\tilde{Y}_u\\
    =&\int_0^t\left[\gamma(u)+
    \int_u^t\left\{ -\gamma(s)c(s)^\top(\sigma(s)\sigma(s)^\top)^{-1}c(s)+a(s)\right\}\gamma(s,u;s)ds\right]\\
    &\times c(u)^\top(\sigma(u)\sigma(u)^\top)^{-1}d\tilde{Y}_u\\
    =&\int_0^t a(s)\int_0^s \gamma(s,u;s)c(u)^\top(\sigma(u)\sigma(u)^\top)^{-1}d\tilde{Y}_uds\\
    &+\int_0^t\gamma(u) c(u)^\top(\sigma(u)\sigma(u)^\top)^{-1}d\tilde{Y}_u\\
    &-\int_0^t\gamma(s)c(s)^\top(\sigma(s)\sigma(s)^\top)^{-1}c(s) \int_0^s \gamma(s,u;s)c(u)^\top(\sigma(u)\sigma(u)^\top)^{-1}d\tilde{Y}_uds\\
    =&\int_0^t a(s)\tilde{\mu}_{s,s}ds+\int_0^t\gamma(u) c(u)^\top(\sigma(u)\sigma(u)^\top)^{-1}\{d\tilde{Y}_u-c(s)\tilde{\mu}_{s,s}ds\},
  \end{align*}
  which gives the equation
  \begin{align*}
    d\tilde{\mu}_{t,t}=a(t)\tilde{\mu}_{t,t}dt+\gamma(t) c(t)^\top(\sigma(t)\sigma(t)^\top)^{-1}\{d\tilde{Y}_t-c(t)\tilde{\mu}_{t,t}dt\}.
  \end{align*}
  By (\ref{eq3-12}) and $d E[X_t]=a(t)E[X_t]dt$, this is equivalent to the desired result.
\end{proof}

\begin{proposition}{\bf (Rauch-Tung-Striebel smoother)}\\
  If $\gamma(r)$ is strictly positive for every $r \in [0,t]$, it holds for every $0\leq s\leq t$
  \begin{align}\label{eq-RTS-mu}
    \mu_{s;t}=&\mu_{t;t}-\int_s^t \left\{ a(r)\mu_{r;t}+b(r)b(r)^\top \gamma(r)^{-1}(\mu_{r;t}-\mu_{r;r}) \right\}dr,
  \end{align}
  and
  \begin{align}\label{eq-RTS-gamma}
    \begin{split}
      \frac{d}{ds}\gamma(s,s;t)=&-b(s)b(s)^\top+\gamma(s,s;t)\left\{ a(s)^\top + \gamma(s)^{-1}b(s)b(s)^\top \right\}\\
    &+\left\{ a(s) + b(s)b(s)^\top\gamma(s)^{-1} \right\}\gamma(s,s;t).
      \end{split}    
  \end{align}
\end{proposition}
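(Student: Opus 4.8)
The plan is to derive both identities by feeding into the Bryson--Frazier smoother the algebraic relation $\phi(s;t) = \gamma(s)^{-1} - \gamma(s,s;t)^{-1}$, which is available under the strict positivity hypothesis: by Proposition~\ref{prop-gamma} one has $w(s;t) = \gamma(s,s;t)$, and~(\ref{eq-w-inverse-formula}) then gives $w(s;t)^{-1} = \gamma(s)^{-1} - \phi(s;t)$. The first step is to re-express the auxiliary process $\rho_{s;t}$ of the Bryson--Frazier smoother through the smoother $\mu_{s;t}$ and the filter $\mu_{s;s}$. Writing $\tilde{Y}$ as in~(\ref{eq3-12}) and splitting the integral in~(\ref{eq-def-mu}) at $u=s$: for $u \ge s$, the symmetry of $\gamma$ and $w$ combined with~(\ref{eq-cov-xi-2}) gives $\gamma(s,u;t) = w(s;t)\,\alpha(s,u;t)^\top$, so the tail $\int_s^t \gamma(s,u;t) c(u)^\top (\sigma(u)\sigma(u)^\top)^{-1}\,d\tilde{Y}_u$ equals $w(s;t)\,\rho_{s;t}$.

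For the head $\int_0^s \gamma(s,u;t) c(u)^\top (\sigma(u)\sigma(u)^\top)^{-1}\,d\tilde{Y}_u$, the key observation is that, by Proposition~\ref{prop-expression-cov-xi}, for each fixed $s$ the map $u \mapsto \gamma(s,u;t)$ on $[0,s]$ solves a linear ODE in $u$ whose coefficients involve only $a$, $b$ and the \emph{filtering} covariance $\gamma(\cdot)$, hence do not depend on $t$. Consequently the flow carrying $\gamma(s,s;t)$ back to $\gamma(s,u;t)$ is independent of $t$, and matching against the filtering case $t=s$ (where $\gamma(s,s;s)=\gamma(s)$) yields $\gamma(s,u;t) = w(s;t)\gamma(s)^{-1}\gamma(s,u;s)$ for $u \le s$. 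Hence the head equals $w(s;t)\gamma(s)^{-1}\bigl(\mu_{s;s} - E[X_s]\bigr)$, and adding the tail gives $\mu_{s;t} - E[X_s] = w(s;t)\gamma(s)^{-1}\bigl(\mu_{s;s} - E[X_s]\bigr) + w(s;t)\,\rho_{s;t}$. Solving for $\rho_{s;t}$ and substituting $w(s;t)^{-1} = \gamma(s)^{-1} - \phi(s;t)$ yields $\rho_{s;t} = \gamma(s)^{-1}(\mu_{s;t} - \mu_{s;s}) - \phi(s;t)(\mu_{s;t} - E[X_s])$.

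Plugging this into the Bryson--Frazier equation for $d_s\mu_{s;t}$, the two occurrences of $b(s)b(s)^\top \phi(s;t)(\mu_{s;t} - E[X_s])$ cancel, leaving $d_s\mu_{s;t} = \bigl\{ a(s)\mu_{s;t} + b(s)b(s)^\top \gamma(s)^{-1}(\mu_{s;t} - \mu_{s;s}) \bigr\}\,ds$; integrating from $s$ to $t$ against the terminal value $\mu_{t;t}$ (read off from~(\ref{eq-def-mu}) at $s=t$) gives~(\ref{eq-RTS-mu}). For~(\ref{eq-RTS-gamma}), I would start from~(\ref{eq-derivative-w}) for $\tfrac{d}{ds}w(s;t) = \tfrac{d}{ds}\gamma(s,s;t)$ and insert $\phi(s;t) = \gamma(s)^{-1} - \gamma(s,s;t)^{-1}$; since $w(s;t)\phi(s;t) = w(s;t)\gamma(s)^{-1} - I$ and $\phi(s;t)w(s;t) = \gamma(s)^{-1}w(s;t) - I$, the three copies of $b(s)b(s)^\top$ collapse to a single $-b(s)b(s)^\top$ and the $\phi$-terms turn into $\gamma(s)^{-1}$-terms, which is precisely~(\ref{eq-RTS-gamma}).

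The main obstacle is the head identity $\gamma(s,u;t) = w(s;t)\gamma(s)^{-1}\gamma(s,u;s)$: one must recognize Proposition~\ref{prop-expression-cov-xi} as a backward linear ODE in $u$ with $t$-free coefficients and then pin down the flow by comparison with the pure filtering situation. Everything downstream is bookkeeping organized around the single relation $\phi(s;t) = \gamma(s)^{-1} - \gamma(s,s;t)^{-1}$.
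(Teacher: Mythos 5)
Your proof is correct, but it takes a genuinely different route from the paper's. The paper derives (\ref{eq-RTS-mu}) directly from the integral representation (\ref{eq-def-mu}): it writes $\tilde{\mu}_{s;t}=\tilde{\mu}_{t;t}-\int_s^t\int_0^t \partial_r\gamma(r,u;t)\,d\hat{Y}_u\,dr$, splits the inner integral at $u=r$ so that the two different $r$-derivatives of $\gamma(r,u;t)$ apply (the one with $\phi(r;t)$ for $u\le r$, the one with $\gamma(r)^{-1}$ for $u\ge r$), and then uses $\gamma(r)^{-1}-\phi(r;t)=\gamma(r,r;t)^{-1}$ together with the $t$-independence of $\gamma(r,r;t)^{-1}\gamma(r,u;t)$ to recognize the leftover term as $b(r)b(r)^\top\gamma(r)^{-1}\tilde{\mu}_{r;r}$. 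You instead route the argument through the Bryson--Frazier smoother, which requires the intermediate identity $\rho_{s;t}=\gamma(s)^{-1}(\mu_{s;t}-\mu_{s;s})-\phi(s;t)(\mu_{s;t}-E[X_s])$; your derivation of it (splitting (\ref{eq-def-mu}) at $u=s$, identifying the tail with $w(s;t)\rho_{s;t}$ via (\ref{eq-cov-xi-2}) and symmetry of $w$, and the head with $w(s;t)\gamma(s)^{-1}\tilde{\mu}_{s;s}$ via the $t$-free backward flow supplied by Proposition \ref{prop-expression-cov-xi}) is sound, and the cancellation of the $\phi$-terms after substitution into the BF equation is exactly right. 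The ingredients are the same as the paper's --- Proposition \ref{prop-expression-cov-xi}, (\ref{eq-cov-xi-2}) and (\ref{eq-w-inverse-formula}) --- but your organization buys an explicit bridge formula between the BF adjoint variable $\rho_{s;t}$ and the filtered and smoothed means, which makes the equivalence of the two smoothers transparent and is of independent interest; the cost is the dependence on the BF proposition (harmless, since its proof does not use the RTS result, so there is no circularity). For (\ref{eq-RTS-gamma}) your computation is precisely the paper's one-line instruction carried out in detail: substitute $\phi(s;t)=\gamma(s)^{-1}-w(s;t)^{-1}$ into (\ref{eq-derivative-w}) and let the three copies of $b(s)b(s)^\top$ collapse to a single $-b(s)b(s)^\top$.
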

\begin{proof}
  Set 
  \begin{align*}
    \tilde{\mu}_{t;t}=\mu_{t;t}-E[X_t],~~\hat{Y}_t=\int_0^t c(u)^\top (\sigma(u) \sigma(u)^\top)^{-1} (dY_u - c(u) E[X_u] \, du).
  \end{align*}
  According to Proposition \ref{prop-expression-cov-xi}, for $s\leq u$ we have
  \begin{align*}
    d_s\gamma(s,u;t)
    =\left\{ a(s)+b(s)b(s)^\top \gamma(s)^{-1} \right\}\gamma(s,u;t)ds
  \end{align*}
  On the other hand, it follows from (\ref{eq-cov-xi-2}) that for $s\geq u$
  \begin{align}
    \label{eq-ds-cov-xi}d_s\gamma(s,u;t)=\left\{ a(s)+b(s)b(s)^\top \phi(s;t) \right\}\gamma(s,u;t)ds.
  \end{align}
  Therefore, it holds
  \begin{align*}
    \tilde{\mu}_{s;t}=&\int_0^t\left\{\gamma(t,u;t)-\int_s^t \frac{\partial}{\partial r}\gamma(r,u;t)dr \right\} d\hat{Y}_u\\
    =&\tilde{\mu}_{t;t}-\int_s^t \int_0^t \frac{\partial}{\partial r}\gamma(r,u;t)d\hat{Y}_u dr\\
    =&\tilde{\mu}_{t;t}-\int_s^t \int_0^r \left\{ a(r)+b(r)b(r)^\top \phi(r;t) \right\}\gamma(r,u;t)d\hat{Y}_u dr\\
    &-\int_s^t \int_r^t \left\{ a(r)+b(r)b(r)^\top \gamma(r)^{-1} \right\}\gamma(r,u;t)d\hat{Y}_u dr\\
    =&\tilde{\mu}_{t;t}-\int_s^t \left\{ a(r)+b(r)b(r)^\top \gamma(r)^{-1} \right\}\int_0^t \gamma(r,u;t)d\hat{Y}_u dr\\
    &+\int_s^t \int_0^r b(r)b(r)^\top \{\gamma(r)^{-1}-\phi(r;t)\} \gamma(r,u;t)d\hat{Y}_u dr\\
    =&\tilde{\mu}_{t;t}-\int_s^t \left\{ a(r)+b(r)b(r)^\top \gamma(r)^{-1} \right\}\tilde{\mu}_{r;t} dr\\
    &+\int_s^t b(r)b(r)^\top \int_0^r  \gamma(r,r;t)^{-1} \gamma(r,u;t)d\hat{Y}_u dr.
  \end{align*}
  In the last equation, we utilized the relation (\ref{eq-w-inverse-formula}).

  Furthermore, according to Proposition \ref{prop-expression-cov-xi}, $\gamma(r,r;t)^{-1} \gamma(r,u;t)$ does not depend on $t$. Hence, we have
  \begin{align*}
    \int_0^r  V[\xi_{r;t}]^{-1}\gamma(r,u;t)d\hat{Y}_u &=\int_0^r \gamma(r,r;r)^{-1} \gamma(r,u;r)d\hat{Y}_u=\gamma(r)^{-1}\tilde{\mu}_{r,r}.
  \end{align*}

  Therefore, we obtain
  \begin{align*}
    \tilde{\mu}_{s;t}=&\tilde{\mu}_{t;t}-\int_s^t \left\{ a(r)+b(r)b(r)^\top \gamma(r)^{-1} \right\}\tilde{\mu}_{r;t} dr+\int_s^t b(r)b(r)^\top \gamma(r)^{-1}\tilde{\mu}_{r,r} dr.
  \end{align*}
  Finally, we obtain (\ref{eq-RTS-mu}) by using $\tilde{\mu}_{s;t}=\mu_{s;t}-E[X_s]$ and $dE[X_s]=a(s)E[X_s]dt$. To prove (\ref{eq-RTS-gamma}), exchange $\gamma(s)^{-1}$ and $\phi(s;t)$ in (\ref{eq-derivative-w}), and recall $w(s;t)=V[\xi_{s;t}]=\gamma(s,s;t)$.
\end{proof}

\begin{proposition}\label{prop-derivative-cov-xi-s-u-t}
  It holds for $s,u\geq0$ and $t\geq s\vee u$ that
  \begin{align*}
    d_t\gamma(s,u;t)=-\gamma(t,s;t)^\top c(t)^\top(\sigma(t)\sigma(t)^\top)^{-1}c(t)
    \gamma(t,u;t)dt.
  \end{align*}
\end{proposition}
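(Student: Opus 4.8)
The plan is to differentiate $\gamma(s,u;t)$ in the terminal time $t$, exploiting that $t$ enters only through the Riccati solution $\phi(\cdot;t)$ of (\ref{eq-def-phi}) and through $V[\xi_{0;t}]$. Since $\gamma(s,u;t)=\gamma(u,s;t)^\top$ and the claimed right-hand side is carried to its $(u\leftrightarrow s)$-swap by transposition, it suffices to treat the case $u\le s\le t$; and following the pattern of Propositions~\ref{prop-gamma}--\ref{prop-equation-xi-t-s}, I would first assume $\gamma(r)$ is nonsingular on $[0,t]$ and recover the general case at the end by replacing $V[X_0]$ with $V[X_0]+\epsilon I_{d_1}$ and letting $\epsilon\to0$, using the uniform convergence in Lemma~\ref{lemma-gamma-epsilon}.

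The essential input is the sensitivity of $\phi$ to the terminal time,
\[
\partial_t\phi(r;t)=-\alpha(r,t;t)^\top c(t)^\top(\sigma(t)\sigma(t)^\top)^{-1}c(t)\,\alpha(r,t;t),\qquad 0\le r\le t,
\]
where $\alpha$ is the propagator of (\ref{def-alpha-u-s-t}). To establish it I would differentiate (\ref{eq-def-phi}) in the parameter $t$: the matrix $\theta(r;t):=\partial_t\phi(r;t)$ solves the linear (Lyapunov-type) equation $\partial_r\theta=-\{a(r)+b(r)b(r)^\top\phi(r;t)\}^\top\theta-\theta\{a(r)+b(r)b(r)^\top\phi(r;t)\}$, whose solution propagated from time $t$ is $\alpha(r,t;t)^\top\theta(t;t)\alpha(r,t;t)$; and the terminal value $\theta(t;t)=\partial_t\phi(t;t)$ is obtained by differentiating the constant map $t\mapsto\phi(t;t)\equiv0$ and reading off (\ref{eq-def-phi}) at $r=t$, which gives $\theta(t;t)=-c(t)^\top(\sigma(t)\sigma(t)^\top)^{-1}c(t)$.

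Next I would use, for $u\le s\le t$, the factorization $\gamma(s,u;t)=\alpha(s,t;t)^{-1}\gamma(t,u;t)$, which follows from (\ref{eq-cov-xi-2}) by eliminating $V[\xi_{u;t}]$ between $\gamma(s,u;t)=\alpha(u,s;t)V[\xi_{u;t}]$ and $\gamma(t,u;t)=\alpha(u,t;t)V[\xi_{u;t}]$ and using the cocycle identity $\alpha(u,t;t)=\alpha(s,t;t)\alpha(u,s;t)$. Differentiating in $t$ produces $d_t\gamma(t,u;t)$, supplied by Proposition~\ref{prop-equation-xi-t-s}, together with $d_t[\alpha(s,t;t)^{-1}]\,\gamma(t,u;t)$. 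Here $d_t\alpha(s,t;t)$ splits into the endpoint contribution $a(t)\alpha(s,t;t)\,dt$ (using $\phi(t;t)=0$) and the parametric contribution through $\phi$, which by variation of constants and the sensitivity identity above equals $-\big(\int_s^t\alpha(r,t;t)b(r)b(r)^\top\alpha(r,t;t)^\top dr\big)c(t)^\top(\sigma(t)\sigma(t)^\top)^{-1}c(t)\,\alpha(s,t;t)\,dt$. When assembled, the two $a(t)$-contributions cancel, and using $\gamma(t)=\alpha(s,t;t)V[\xi_{s;t}]\alpha(s,t;t)^\top+\int_s^t\alpha(r,t;t)b(r)b(r)^\top\alpha(r,t;t)^\top dr$ (which is (\ref{eq-expression-w}) at $w(t;t)=\gamma(t)$ together with the cocycle property) and $\gamma(t,s;t)^\top=V[\xi_{s;t}]\alpha(s,t;t)^\top$ from (\ref{eq-cov-xi-2}), the expression collapses to $-\gamma(t,s;t)^\top c(t)^\top(\sigma(t)\sigma(t)^\top)^{-1}c(t)\gamma(t,u;t)\,dt$, as claimed.

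I expect the main obstacle to be the bookkeeping in the final assembly: correctly separating the endpoint derivative from the parametric ($\phi$-dependent) derivative in $d_t\alpha(s,t;t)$, keeping the propagators $\alpha(r,t;t)$, $\alpha(s,t;t)$, $\alpha(u,t;t)$ aligned through the cocycle relation, and recognizing $\gamma(t)-\int_s^t\alpha(r,t;t)b(r)b(r)^\top\alpha(r,t;t)^\top dr$ as $\alpha(s,t;t)V[\xi_{s;t}]\alpha(s,t;t)^\top$ so that the inverse propagator cancels. The $\epsilon$-regularization step should be routine, since the differential identity holds verbatim for $\gamma^\epsilon$ in place of $\gamma$ and both sides converge uniformly on compact sets by Lemma~\ref{lemma-gamma-epsilon}.
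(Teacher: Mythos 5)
Your proof is correct, but it takes a genuinely different route from the paper's. The paper isolates the $t$-dependence on the \emph{filtering} side: it integrates the RTS covariance equation (\ref{eq-RTS-gamma}) backward from the terminal value $\gamma(t,t;t)=\gamma(t)$ to get $\gamma(s,s;t)=\beta(t,s)\gamma(t)\beta(t,s)^\top+\int_s^t\beta(r,s)b(r)b(r)^\top\beta(r,s)^\top dr$ with the $t$-independent propagator $\beta$ generated by $a+bb^\top\gamma^{-1}$, differentiates in $t$ using the Riccati equation (\ref{eq-gamma}), and then transports the result via $\gamma(s,u;t)=\gamma(s,s;t)\beta(s,u)$. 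You instead isolate the $t$-dependence on the \emph{backward} side, through the factorization $\gamma(s,u;t)=\alpha(s,t;t)^{-1}\gamma(t,u;t)$ and an explicit terminal-time sensitivity identity $\partial_t\phi(r;t)=-\alpha(r,t;t)^\top c(t)^\top(\sigma(t)\sigma(t)^\top)^{-1}c(t)\alpha(r,t;t)$, which is an extra lemma not present in the paper (I checked it: the Lyapunov equation for $\partial_t\phi$, the terminal value $-c^\top(\sigma\sigma^\top)^{-1}c$ obtained from $\phi(t;t)\equiv0$, the variation-of-constants computation of $d_t\alpha(s,t;t)$ with the cocycle collapse $\alpha(\tau,t;t)\alpha(s,\tau;t)=\alpha(s,t;t)$, and the final cancellation all work out). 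The trade-off: the paper's argument recycles already-proved equations but runs through $\gamma(r)^{-1}$, forcing the nonsingularity assumption and the $\epsilon$-regularization; your argument costs one additional sensitivity computation but uses only the propagator $\alpha$ (always invertible) and $\phi$ (defined without regularity assumptions), so apart from your appeal to Proposition~\ref{prop-equation-xi-t-s} — which is already stated in full generality — the nonsingularity hypothesis and the $\epsilon\to0$ step are not actually needed. That is very much in the spirit of Remark~\ref{remark-inverse}, and arguably a cleaner route; the only caveat worth flagging is that with merely integrable coefficients the pointwise identities $\partial_r\phi(r;t)|_{r=t}=c(t)^\top(\sigma(t)\sigma(t)^\top)^{-1}c(t)$ and hence $d_t\alpha(s,t;t)$ hold for a.e.\ $t$, which suffices here and is a level of rigor the paper's own proof also implicitly assumes.
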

\begin{proof}
  First assume that $\gamma(t)$ is positive for every $t\geq 0$. Then, due to (\ref{eq-RTS-gamma}), it holds  
  \begin{align*}
    \frac{d}{ds}\gamma(s,s;t)=&-b(s)b(s)^\top+\gamma(s,s;t)\left\{ a(s)^\top + \gamma(s)^{-1}b(s)b(s)^\top \right\}\\
    &+\left\{ a(s) + b(s)b(s)^\top \gamma(s)^{-1} \right\}\gamma(s,s;t).
  \end{align*}
  Thus, we obtain for $s\leq t$
  \begin{align*}
    \gamma(s,s;t)=&\beta(t,s)\gamma(t) \beta(t,s)^{\top}+\int_s^t \beta(u,s)b(u) b(u)^\top\beta(u,s)^\top du,
  \end{align*}
  where $\beta$ is defined in (\ref{eq-def-beta}) and we used $V[\xi_{t;t}]=\gamma(t)$. Differentiating this yields
  \begin{align*}
    &d_t\gamma(s,s;t)
    =\beta(t,s)\\
    &\times \biggl[ -\left\{a(t)+b(t)b(t)^\top\gamma(t)^{-1} \right\}\gamma(t) -\gamma(t)\left\{ a(t)^\top+\gamma(t)^{-1}b(t)b(t)^\top\right\}\\
    &+\frac{d}{dt}\gamma(t)+b(t)b(t)^\top \biggr]\beta(t,s)\\
    =&-\beta(t,s)\gamma(t)c(t)^\top(\sigma(t)\sigma(t)^\top)^{-1}c(t)\gamma(t)\beta(t,s)t
  \end{align*}
  for $t\geq s$. Thus, noting that it follow from Proposition \ref{prop-expression-cov-xi} that $\gamma(t)\beta(t,s)^\top =\gamma(t,s;t)$, we obtain
  \begin{align}
    \label{eq3-13}\begin{split}      
    &d_t\gamma(s,u;t)
    =d_t\gamma(s,s;t)\beta(s,u)\\
    =&-\beta(t,s)\gamma(t)c(t)^\top(\sigma(t)\sigma(t)^\top)^{-1}c(t)\gamma(t)\beta(t,u)dt\\
    =&-\gamma(t,s;t)^\top c(t)^\top(\sigma(t)\sigma(t)^\top)^{-1}c(t)
    \gamma(t,u;t)dt
    \end{split}
  \end{align}
  for $0\leq u\leq s \leq t$. In the case where \( s \leq u \), we can derive the same formula by noting that $\frac{\partial}{\partial t} \mathrm{Cov}(\xi_{s; t}, \xi_{u; t})
  = \frac{\partial}{\partial t} \mathrm{Cov}(\xi_{u; t}, \xi_{s; t})^\top.$ Finally, assume that \( \gamma(t) \) may be singular. Then, we have
\begin{align*}
  d_t \mathrm{Cov}(\xi_{s; t}^\epsilon, \xi_{u; t}^\epsilon) 
  &= -\mathrm{Cov}(\xi_{t; t}^\epsilon, \xi_{s; t}^\epsilon)^\top c(t)^\top (\sigma(t) \sigma(t)^\top)^{-1} c(t) 
  \mathrm{Cov}(\xi_{t; t}^\epsilon, \xi_{u; t}^\epsilon) \, dt,
\end{align*}
where \( \gamma(s)^\epsilon \) and \( \xi_{s; t}^\epsilon \) are defined in (\ref{eq-def-gamma-epsilon-1}) to (\ref{eq-def-xi-epsilon-2}). Now, noting the fact that \( \mathrm{Cov}(\xi_{t; t}^\epsilon, \xi_{s; t}^\epsilon) \) converges uniformly to \( \mathrm{Cov}(\xi_{t; t}, \xi_{s; t})=\gamma(t,s;t) \) on any compact set as \( \epsilon \to 0 \), we have the desired result.

\end{proof}

\begin{proposition}{\bf (Fixed-point smoother)}\\
  For any $s \geq 0$, the process $\{\mu_{s;t}\}_{t\geq s}$ satisfies the stochastic differential equation
  \begin{align*}
    d_t\mu_{s;t}=\gamma(s,t;t)c(t)^\top(\sigma(t)\sigma(t)^\top)^{-1}(dY_t-c(t)\mu_{t;t}dt).
  \end{align*}
\end{proposition}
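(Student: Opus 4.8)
The plan is to obtain the evolution of $t\mapsto\mu_{s;t}$ by differentiating, in the time variable $t$, the representation of the smoothing mean furnished by Theorem~\ref{main-theorem}. Fix $s\ge 0$, put $\tilde\mu_{s;t}=\mu_{s;t}-E[X_s]$ and, as in the proof of the RTS smoother, $\hat Y_t=\int_0^t c(u)^\top(\sigma(u)\sigma(u)^\top)^{-1}(dY_u-c(u)E[X_u]\,du)$, so that Theorem~\ref{main-theorem} reads
\[
\tilde\mu_{s;t}=\int_0^t\gamma(s,u;t)\,d\hat Y_u\qquad(t\ge s),
\]
with a \emph{deterministic} integrand against the continuous semimartingale $\hat Y$. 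By Proposition~\ref{prop-derivative-cov-xi-s-u-t}, for $0\le u\le t$ with $t\ge s$ one has $\partial_t\gamma(s,u;t)=-\gamma(t,s;t)^\top c(t)^\top(\sigma(t)\sigma(t)^\top)^{-1}c(t)\gamma(t,u;t)$, jointly continuous in $(u,t)$; since $\gamma(s,u;\cdot)$ is $C^1$ on $[s\vee u,\infty)$ this gives $\gamma(s,u;t)=\gamma(s,u;s\vee u)+\int_{s\vee u}^t\partial_r\gamma(s,u;r)\,dr$.

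Substituting this into the displayed formula splits $\tilde\mu_{s;t}$ into $\int_0^t\gamma(s,u;s\vee u)\,d\hat Y_u$, a stochastic integral with a \emph{fixed} integrand, and $\int_0^t\bigl(\int_{s\vee u}^t\partial_r\gamma(s,u;r)\,dr\bigr)\,d\hat Y_u$. On the latter I would invoke the stochastic Fubini theorem to interchange the $dr$- and $d\hat Y_u$-integrals; this is licit because $(u,r)\mapsto\partial_r\gamma(s,u;r)$ is continuous, hence bounded, on the compact triangle $\{0\le u\le r\le t\}$, and $\hat Y$ is a continuous semimartingale on $[0,t]$ (equivalently, under the measure $Q$ of Proposition~\ref{prop-Kallianpur-Striebel-linear}, $\hat Y$ is a Gaussian martingale and these are Wiener integrals). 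The interchange turns the term into $\int_s^t\bigl(\int_0^r\partial_r\gamma(s,u;r)\,d\hat Y_u\bigr)\,dr=-\int_s^t\gamma(r,s;r)^\top c(r)^\top(\sigma(r)\sigma(r)^\top)^{-1}c(r)\bigl(\int_0^r\gamma(r,u;r)\,d\hat Y_u\bigr)\,dr$, and by Theorem~\ref{main-theorem} applied with $s$ replaced by $r$ the inner integral equals $\mu_{r;r}-E[X_r]$. Hence, for $t\ge s$,
\[
\tilde\mu_{s;t}=\int_0^t\gamma(s,u;s\vee u)\,d\hat Y_u-\int_s^t\gamma(r,s;r)^\top c(r)^\top(\sigma(r)\sigma(r)^\top)^{-1}c(r)\bigl(\mu_{r;r}-E[X_r]\bigr)\,dr,
\]
a bona fide semimartingale decomposition.

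Differentiating the last display in $t$ (using $s\vee u=t$ at $u=t$) gives
\[
d_t\tilde\mu_{s;t}=\gamma(s,t;t)\,d\hat Y_t-\gamma(t,s;t)^\top c(t)^\top(\sigma(t)\sigma(t)^\top)^{-1}c(t)\bigl(\mu_{t;t}-E[X_t]\bigr)\,dt.
\]
It then remains only to tidy up: $\gamma(t,s;t)^\top=\gamma(s,t;t)$ because it is the transpose of a conditional cross-covariance, $d\hat Y_t=c(t)^\top(\sigma(t)\sigma(t)^\top)^{-1}(dY_t-c(t)E[X_t]\,dt)$, and $d_t\tilde\mu_{s;t}=d_t\mu_{s;t}$; plugging these in, the drift and martingale parts recombine as
\[
\gamma(s,t;t)c(t)^\top(\sigma(t)\sigma(t)^\top)^{-1}\bigl(dY_t-c(t)E[X_t]\,dt-c(t)(\mu_{t;t}-E[X_t])\,dt\bigr)=\gamma(s,t;t)c(t)^\top(\sigma(t)\sigma(t)^\top)^{-1}(dY_t-c(t)\mu_{t;t}\,dt),
\]
which is the asserted equation. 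The only step that is not pure bookkeeping is the stochastic Fubini interchange, i.e.\ justifying the exchange of a Lebesgue parameter integral with a stochastic integral against $\hat Y$ and checking the attendant integrability; it is harmless here, since all kernels involved ($\partial_r\gamma(s,u;r)$, $\gamma(r,u;r)$ and the coefficient matrices) are continuous on compact sets, and, crucially, Proposition~\ref{prop-derivative-cov-xi-s-u-t} holds with no non-singularity assumption on $\gamma(\cdot)$, so no singularity issue intervenes.
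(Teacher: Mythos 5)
Your proof is correct and follows essentially the same route as the paper's: differentiate the representation of $\mu_{s;t}$ from Theorem~\ref{main-theorem} in $t$, use Proposition~\ref{prop-derivative-cov-xi-s-u-t} for the $t$-dependence of the kernel $\gamma(s,u;t)$, and identify $\int_0^t\gamma(t,u;t)c(u)^\top(\sigma(u)\sigma(u)^\top)^{-1}(dY_u-c(u)E[X_u]\,du)$ with $\mu_{t;t}-E[X_t]$. The only difference is that you spell out the stochastic Fubini interchange which the paper performs implicitly when it writes $d_t\mu_{s;t}$ directly; this is a welcome but not substantively different refinement.
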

\begin{proof}
  By the definition of $\mu_{s;t}$ and Proposition \ref{prop-derivative-cov-xi-s-u-t}, we have
  \begin{align*}
    &d_t\mu_{s;t}\\
    =&\gamma(s,t;t)c(t)^\top(\sigma(t)\sigma(t)^\top)^{-1}(dY_t-c(t)E[X_t]dt)\\
    &-\int_0^t \gamma(t,s;t)^\top c(t)^\top(\sigma(t)\sigma(t)^\top)^{-1}c(t)
    \gamma(t,u;t)c(u)^\top(\sigma(u)\sigma(u)^\top)^{-1}\\
    &\times (dY_u-c(u)E[X_u]du)dt\\
  =&\gamma(s,t;t)c(t)^\top(\sigma(t)\sigma(t)^\top)^{-1}(dY_t-c(t)E[X_t]dt)\\
  &- \gamma(t,s;t)^\top c(t)^\top(\sigma(t)\sigma(t)^\top)^{-1}c(t)(\mu_{t;t}-E[X_t])dt\\
  =&\gamma(s,t;t)c(t)^\top(\sigma(t)\sigma(t)^\top)^{-1}(dY_t-c(t)\mu_{t;t}dt).
  \end{align*}
\end{proof}

\section{Further Applications}\label{section-application}
As mentioned above, Theorem~\ref{main-theorem} enables pathwise sampling from the conditional distribution of the hidden state process. This opens up a range of potential applications, including:
\begin{itemize}
  \item \textbf{Expectation of nonlinear functionals:} 
  The ability to sample from the full smoothing distribution enables the approximation of expectations involving nonlinear and path-dependent functionals, such as
  \[
    E\left[ \max_{0 \leq s \leq t} X_s \,\middle|\, \mathcal{Y}_t \right],
  \]
  which are intractable under standard smoothing equations. Such expectations are relevant in financial option pricing and engineering threshold detection.
  
  \item \textbf{Parameter estimation via the EM algorithm.}  
  The EM algorithm is a widely used method for maximum likelihood estimation when part of the data is latent. In the E-step, the expectation of the complete-data log-likelihood (Q-function) must be taken with respect to the smoothing distribution of the hidden states. Our main theorem allows for this expectation to be approximated via Monte Carlo simulation, enabling efficient estimation even in complex settings. 

  It can also be applied to nonlinear models in combination with linearization techniques, such as the Extended Kalman Filter (EKF). For instance, after linearizing the system dynamics and generating sample paths of the hidden state, the Q-function can be approximated without further linearizing the observation (transfer) function in the likelihood.
  
  \item \textbf{Construction of simultaneous confidence bands.} Unlike pointwise confidence intervals derived from smoothed covariance matrices, our pathwise sampling approach enables the construction of simultaneous confidence bands for the hidden process, ensuring global coverage over a time interval. This is particularly useful for detecting structural deviations or threshold exceedances across time.
\end{itemize}

\begin{appendix}
\section{Lemma for  (\ref{eq-conditional-of-Z})}
Let \( C_t \) be the space of continuous functions \( g: [0, t] \to \mathbb{R}^{d_2} \), and let \( \mathcal{C}_t \) be the cylindrical \(\sigma\)-algebra.
\begin{lemma}\label{lemma-stochastic-integral-measurable-function}
  Let $\{H_t\}_{t\geq 0}$ be a $d_1$-dimensional continuous measurable process, and write $\tilde{V}_s=\int_0^s b(s)dV_s.$
  Then, there exists a measurable function $\Phi_t:(C_t\times C_t,\mathcal{C}_t\otimes \mathcal{C}_t)\to \mathbb{R}$ such that
  \begin{align}
    \label{eq2-1}\Phi_t(H.,\tilde{V}.)=\int_0^t H_s d\tilde{V}_s~~\textrm{a.s.},
  \end{align}
  and for $\mu_H$-a.e. $h\in C_t$
  \begin{align}
    \label{eq2-2}\Phi_t(h,\tilde{V}.)=\int_0^t h(s) d\tilde{V}_s~~\textrm{a.s.}
  \end{align} 
  Here, $H.$ and $\tilde{V}.$ denote the paths of $\{H_s\}_{0\leq s\leq t}$ and $\{\tilde{V}_s\}_{0\leq s\leq t}$, and $\mu_H$ is the distribution of $H.$ on $C_t$.
\end{lemma}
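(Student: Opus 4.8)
The plan is to realize $\Phi_t$ as the pointwise limit, along one carefully chosen subsequence of dyadic partitions, of elementary bilinear Riemann--sum functionals, and then to force that single subsequence to serve simultaneously the random integrand $\{H_s\}$ and $\mu_H$-almost every frozen path $h$. For $n\ge 1$ put $t_i^n=it/2^n$ and, for $f,g\in C_t$, define $S_n(f,g)=\sum_{i=1}^{2^n}f(t_{i-1}^n)^\top\bigl(g(t_i^n)-g(t_{i-1}^n)\bigr)$. Each $S_n$ is continuous on $C_t\times C_t$ in the uniform topology, hence $\mathcal{C}_t\otimes\mathcal{C}_t$-measurable, and $(h,\omega)\mapsto S_n(h,\tilde V_\cdot(\omega))$ is $\mathcal{C}_t\otimes\mathcal{F}$-measurable. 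I would lean on two standard facts. First, since $\tilde V_s=\int_0^s b(u)\,dV_u$ is a continuous $L^2$-martingale with $\langle\tilde V\rangle_s=\int_0^s b(u)b(u)^\top\,du$ (finite by (\ref{eq-assumption-b})) and $\{H_s\}$ is continuous and adapted, the left-endpoint Riemann sums converge in probability, $S_n(H_\cdot,\tilde V_\cdot)\to\int_0^t H_s\,d\tilde V_s$. Second, for a fixed $h\in C_t$, writing $h_n(s)=h(t_{i-1}^n)$ on $[t_{i-1}^n,t_i^n)$, It\^o's isometry gives, for $m\ge n$,
\begin{align*}
  E\bigl[\,|S_n(h,\tilde V_\cdot)-S_m(h,\tilde V_\cdot)|^2\,\bigr]
  &=\int_0^t\bigl|b(s)^\top(h_n(s)-h_m(s))\bigr|^2\,ds\\
  &\le \omega_h(2t/2^n)^2\int_0^t|b(s)|^2\,ds,
\end{align*}
where $\omega_h(\delta)=\sup_{|s-s'|\le\delta}|h(s)-h(s')|\to 0$ as $\delta\to 0$ by uniform continuity of $h$; hence $\{S_n(h,\tilde V_\cdot)\}_n$ is $L^2$-Cauchy and converges in $L^2$ to $\int_0^t h(s)\,d\tilde V_s$.

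Next I would set $c_{n,m}(h)=E\bigl[\,|S_n(h,\tilde V_\cdot)-S_m(h,\tilde V_\cdot)|^2\wedge 1\,\bigr]$, which is $\mathcal{C}_t$-measurable (Fubini) and bounded by $1$, and $\rho_n=\int_{C_t}\sup_{m\ge n}c_{n,m}(h)\,\mu_H(dh)$. The isometry bound gives $\sup_{m\ge n}c_{n,m}(h)\le\min\{1,\ \omega_h(2t/2^n)^2\int_0^t|b|^2\}\to 0$ for every $h$, so $\rho_n\to 0$ by dominated convergence. Choose an increasing sequence $(n_k)$ with $\rho_{n_k}\le 2^{-k}$; since convergence in probability is inherited by subsequences and the $\rho_{n_k}$ are nonnegative, I may further thin $(n_k)$ so that in addition $S_{n_k}(H_\cdot,\tilde V_\cdot)\to\int_0^t H_s\,d\tilde V_s$ a.s., still with $\sum_k\rho_{n_k}<\infty$. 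Now define $\Phi_t(f,g)=\lim_{k\to\infty}S_{n_k}(f,g)$ on the $\mathcal{C}_t\otimes\mathcal{C}_t$-measurable set on which this limit exists in $\mathbb{R}$, and $\Phi_t(f,g)=0$ otherwise; then $\Phi_t$ is measurable and, by construction, $\Phi_t(H_\cdot,\tilde V_\cdot)=\int_0^t H_s\,d\tilde V_s$ a.s., which is (\ref{eq2-1}).

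It remains to establish (\ref{eq2-2}). Because $c_{n_k,n_{k+1}}(h)\le\sup_{m\ge n_k}c_{n_k,m}(h)$ (as $n_{k+1}>n_k$) and $\sum_k\int_{C_t}\sup_{m\ge n_k}c_{n_k,m}\,d\mu_H=\sum_k\rho_{n_k}<\infty$, Tonelli's theorem yields $\sum_k c_{n_k,n_{k+1}}(h)<\infty$ for $\mu_H$-a.e.\ $h$. Fix such an $h$. From the Chebyshev-type bound $P(|U|>\varepsilon)\le\varepsilon^{-2}E[|U|^2\wedge 1]$, valid for $\varepsilon\in(0,1]$, the series $\sum_k P\bigl(|S_{n_k}(h,\tilde V_\cdot)-S_{n_{k+1}}(h,\tilde V_\cdot)|>\varepsilon\bigr)$ converges for every such $\varepsilon$, so Borel--Cantelli shows that $\{S_{n_k}(h,\tilde V_\cdot)\}_k$ is a.s.\ Cauchy, hence a.s.\ convergent; its a.s.\ limit must coincide with the $L^2$-limit $\int_0^t h(s)\,d\tilde V_s$. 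Therefore $\Phi_t(h,\tilde V_\cdot)=\lim_k S_{n_k}(h,\tilde V_\cdot)=\int_0^t h(s)\,d\tilde V_s$ a.s., which is (\ref{eq2-2}).

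The crux is the coupling in the last step: a.s.\ convergence of Riemann sums is intrinsically a subsequential statement, and for a frozen $h$ the subsequence giving a.s.\ convergence could a priori depend on $h$. The remedy is to measure the error in the squared-and-truncated $L^2(P)$ norm, average this over $h\sim\mu_H$, select a single subsequence of partitions along which these averages are summable, and then upgrade ``$\mu_H$-a.e.\ summability'' (Tonelli) to ``$\mu_H$-a.e.\ a.s.\ convergence'' (Borel--Cantelli); truncating by $1$ is exactly what makes the errors integrable in $h$ with no moment assumption on $\sup_{0\le s\le t}|H_s|$. The remaining ingredients---joint measurability of the Riemann functionals, the isometry estimate, and convergence in probability of Riemann sums to the It\^o integral---are all standard.
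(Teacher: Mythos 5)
Your construction of $\Phi_t$ and the verification of (\ref{eq2-1}) are fine, and the quantitative It\^o-isometry machinery is set up correctly, but the final Borel--Cantelli step contains a genuine error. From $\sum_k c_{n_k,n_{k+1}}(h)<\infty$ and the Chebyshev bound you conclude that $\{S_{n_k}(h,\tilde{V}_\cdot)\}_k$ is a.s.\ Cauchy. What Borel--Cantelli actually delivers is that for each fixed $\varepsilon\in(0,1]$, almost surely only finitely many \emph{consecutive} increments exceed $\varepsilon$; intersecting over $\varepsilon=1/j$ gives only $|S_{n_k}(h,\tilde{V}_\cdot)-S_{n_{k+1}}(h,\tilde{V}_\cdot)|\to 0$ a.s., which does not imply convergence of the sequence (the partial sums of the harmonic series have increments tending to zero). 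Likewise, summability of $E[|S_{n_k}-S_{n_{k+1}}|^2\wedge 1]$ yields a.s.\ summability of the \emph{squared} increments, not of the increments themselves, so the telescoping argument does not close. As written, (\ref{eq2-2}) is not established.

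The repair costs nothing beyond what you already have: compare $S_{n_k}(h,\tilde{V}_\cdot)$ directly with its $L^2$-limit instead of with $S_{n_{k+1}}(h,\tilde{V}_\cdot)$. Since $S_m(h,\tilde{V}_\cdot)\to\int_0^t h(s)\,d\tilde{V}_s$ in $L^2$ and $x\mapsto x^2\wedge 1$ is bounded and continuous, $E\bigl[\,|S_{n_k}(h,\tilde{V}_\cdot)-\int_0^t h\,d\tilde{V}|^2\wedge 1\,\bigr]=\lim_{m\to\infty}c_{n_k,m}(h)\le\sup_{m\ge n_k}c_{n_k,m}(h)$, and your Tonelli step already gives $\sum_k\sup_{m\ge n_k}c_{n_k,m}(h)<\infty$ for $\mu_H$-a.e.\ $h$; Chebyshev and Borel--Cantelli then yield $S_{n_k}(h,\tilde{V}_\cdot)\to\int_0^t h(s)\,d\tilde{V}_s$ a.s.\ directly, which is (\ref{eq2-2}). (Alternatively, enforce $\rho_{n_k}\le 4^{-k}$ and run Borel--Cantelli in $h$ to get $c_{n_k,n_{k+1}}(h)\le 2^{-k}$ eventually, whence the increments are a.s.\ absolutely summable.) It is worth noting that your route, once repaired, differs substantively from the paper's: the paper picks one subsequence along which the Riemann sums for the random pair converge a.s., and transfers this to frozen $h$ by Fubini applied to the \emph{product} measure $\mu_H\otimes\mu_{\tilde{V}}$ --- i.e., it silently uses independence of $H_\cdot$ and $\tilde{V}_\cdot$ (which holds in the application but is not stated in the lemma). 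Your argument avoids independence entirely at the price of the quantitative bookkeeping, so it proves a slightly more general statement; the paper's argument is shorter but needs the extra hypothesis.
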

\begin{proof}
  First we can take a sequence $\{n(k)\}_{k \in \mathbb{N}}$ such that
  \begin{align}
    \label{eq2-3}\sum_{i=1}^{n(k)} H_{t/{2^{n(k)-1}}}^\top(\tilde{V}_{t/{2^{n(k)}}}-\tilde{V}_{t/{2^{{n(k)}-1}}})\xrightarrow{\textrm{a.s.}} \int_0^t H_s d\tilde{V}_s.
  \end{align}
  Now we introduce a sequence of measurable functions \( \Phi_t^k : C_t \times C_t \to \mathbb{R} \) for \( k \in \mathbb{N} \), defined by
\begin{align*}
    \Phi_t^k(h, w) = \sum_{i=1}^{n(k)} h\left(\frac{t}{2^{n(k)-1}}\right)^\top \left(w\left(\frac{t}{2^{n(k)}}\right) - w\left(\frac{t}{2^{n(k)-1}}\right)\right).
\end{align*}
We then define \( \Phi_t : C_t \times C_t \to \mathbb{R} \) by
\begin{align*}
    \Phi_t(h, w) = \begin{cases}
      \displaystyle \lim_{k \to \infty} \Phi_t^k(h, w) & \text{if } \Phi_t^k(h, w) \text{ converges,} \\
      0 & \text{otherwise}.
    \end{cases}
\end{align*}
Since each \( \Phi_t^k \) is measurable, \( \Phi_t \) is also measurable, and (\ref{eq2-1}) follows directly from the construction of \( \Phi_t \).

  Let us verify (\ref{eq2-2}). If we set $A=\left\{ (h,w)\in C_t\times C_t; \Phi_t^k(h,w)~\textrm{converges~to~}\Phi_t(h,w)  \right\}$, then (\ref{eq2-3}) means
  \begin{align*}
    \int_{C_t\times C_t}1_{A^c}(h,w)d\mu_{(H,\tilde{V})}(h,w)=0,
  \end{align*}
  where $\mu_{(H,\tilde{V})}$ is the distribution of $(H.,\tilde{V}.)$. Due to the independence of $H.$ and $\tilde{V}.$, this can be rewritten as
  \begin{align*}
    \int_{C_t}\int_{C_t}1_{A^c}(h,w)d\mu_{\tilde{V}}(w)d\mu_{H}(h)=0,
  \end{align*}
  and thus we have for $\mu_H$-a.e. $h\in C_t$,
  \begin{align*}
    \int_{C_t}1_{A^c}(h,w)d\mu_{\tilde{V}}(w)=0.
  \end{align*}
  From this, it follows that for $\mu_H$-a.s. $h\in C_t$, the sequence of random variables
  \begin{align}
    \label{eq2-4}\Phi_t^k(h,W.)=\sum_{i=1}^{n(k)} h({t/{2^{n(k)-1}}})^\top(\tilde{V}_{t/{2^{n(k)}}}-\tilde{V}_{t/{2^{{n(k)}-1}}})
  \end{align}
  almost surely converges to $\Phi_t(h,\tilde{V})$. On the other hand, since \( \{H_s\}_{0 \leq s \leq t} \) is continuous, we may assume that \( h \) is continuous. For such \( h \), the right-hand side of (\ref{eq2-4}) converges in probability to \( \int_0^t h(s) \, d\tilde{V}_s \), thereby yielding (\ref{eq2-2}).
\end{proof}

\section{Proofs of Lemma \ref{lemma-quadratic-form} and (\ref{eq-tensor-expectation})}\label{section-expectation-calculation}
  Using (\ref{eq-rewrite-exp}), we obtain
  \begin{align}
    \label{eq-z-y-1}\begin{split}
      E_Q[z_t(y)]=E\biggl[ \exp\biggl(&({\xi^0_{t;t}})^\top y(t)- \int_0^ty(s)^\top d{\xi^0_{s;t}}\\
      &+\frac{1}{2}\left. \left.\int_0^t b(s)^\top \phi(s;t)b(s)ds+\frac{1}{2}X_0^\top \phi(0;t) X_0\right) \right].
    \end{split}    
  \end{align}
  Furthermore, substituting the decomposition (\ref{eq-decompisition-xi0}) into (\ref{eq-z-y-1}), we can write
  \begin{align}
     \label{eq3-7}\begin{split}
      &E_Q[z_t(y)] =\exp\left( \frac{1}{2}\int_0^t b(s)^\top \phi(s;t)b(s)ds \right)\\
     &\times E\biggl[ \exp\biggl(X_0^\top \alpha(t;t)^\top y(t)\\
     &\left. \left.- \int_0^ty(s)^\top \left\{ a(s)+b(s)b(s)^\top \phi(s;t) \right\}\alpha(s;t)ds X_0+\frac{1}{2}X_0^\top \phi(0;t) X_0\right) \right]\\
     &\times E\left[ \exp\left(\overline{\xi}_{t;t}^\top y(t)- \int_0^ty(s)^\top d\overline{\xi}_{s;t}\right) \right].
     \end{split}     
  \end{align}

  For the first expectation on the right-hand side, let \( X_0 = \mu + \Sigma^{\frac{1}{2}} U \), where \( U \) is a standard normal random variable, \( \mu \in \mathbb{R}^{d_1} \), and \( \Sigma \in M_{d_1}(\mathbb{R}) \). Then, by straightforward calculation, we obtain
  \begin{align}
    &E\biggl[ \exp\biggl(X_0^\top \alpha(t;t)^\top y(t)\\
    &\left. \left.- \int_0^ty(s)^\top \left\{ a(s)+b(s)b(s)^\top \phi(s;t) \right\}\alpha(s;t)ds X_0+\frac{1}{2}X_0^\top \phi(0;t) X_0\right) \right]\nonumber\\
     \label{eq3-8}\begin{split}
      =&\sqrt{\det|I_{d_1}-\Sigma^\frac{1}{2}\phi(0;t)\Sigma^\frac{1}{2}|}\exp\biggl(\frac{1}{2}\beta(t;y)^\top \Sigma^\frac{1}{2}(I_{d_1}-\Sigma^\frac{1}{2}\phi(0;t)\Sigma^\frac{1}{2})^{-1}\Sigma^\frac{1}{2}\beta(t;y) \biggr)\\
     &\times \exp\biggl(\mu^\top \alpha(t;t)^\top y(t)\\
     &- \int_0^ty(s)^\top \left\{ a(s)+b(s)b(s)^\top \phi(s;t) \right\}\alpha(s;t)ds \mu+\frac{1}{2}\mu^\top \phi(0;t) \mu\biggr).
     \end{split}     
  \end{align}  
  Here, we used the negative semi-definiteness of $\phi(0;t)$.

  For the second expectation, since $ \overline{\xi}_{t;t}^\top y(t)- \int_0^ty(s)^\top d\overline{\xi}_{s;t}$ is a Gaussian random variable with mean 0, it follows that 
  \begin{align}
    &E\left[ \exp\left(\overline{\xi}_{t;t}^\top y(t)- \int_0^ty(s)^\top d\overline{\xi}_{s;t}\right) \right]\nonumber
    =\exp\left( \frac{1}{2}E\left[ \left( \overline{\xi}_{t;t}^\top y(t)- \int_0^ty(s)^\top d\overline{\xi}_{s;t} \right)^2 \right] \right)\nonumber\\
    \label{eq3-9}\begin{split}      
    =&\exp\biggl( \frac{1}{2}y(t)^\top E[\overline{\xi}_{t;t}\overline{\xi}_{t;t}^\top]y(t)\\
    &+\frac{1}{2}\int_0^t y(s)^\top b(s)b(s)^\top y(s)ds-\int_0^t y(s)^\top b(s)b(s)^\top y(t) ds  \biggr).
    \end{split}
  \end{align}
  Putting (\ref{eq3-7}), (\ref{eq3-8}) and (\ref{eq3-9}) together, we obtain Lemma \ref{lemma-quadratic-form}. In the same way, (\ref{eq-tensor-expectation}) is shown by
  \begin{align*}
    &E_Q[ X_{u_1}\otimes \cdots \otimes X_{u_{n}}z_t(y)]\nonumber\\
    =&E\biggl[\xi^0_{u_1;t}\otimes\cdots \otimes \xi^0_{u_{n};t} \exp\biggl({\xi^0_{t;t}}^\top y(t)- \int_0^ty(s)^\top d{\xi^0_{s;t}}\\
    &+\frac{1}{2}\left. \left.\int_0^t b(s)^\top \phi(s;t)b(s)ds+\frac{1}{2}X_0^\top \phi(0;t) X_0\right) \right]\\
     \begin{split}
      =&\exp\left( \frac{1}{2}\int_0^t b(s)^\top \phi(s;t)b(s)ds \right) E\biggl[\{\alpha(u_1;t)X_0+\overline{\xi}_{u_1;t}\}\otimes \cdots \otimes \{\alpha(u_1;t)X_0+\overline{\xi}_{u_1;t}\}\\
     &\times  \exp\biggl(X_0^\top \alpha(t;t)^\top y(t)\\
     &- \int_0^ty(s)^\top \left\{ a(s)+b(s)b(s)^\top \phi(s;t) \right\}\alpha(s;t)ds X_0+\frac{1}{2}X_0^\top \phi(0;t) X_0\biggr)\\
     &\left.\times  \exp\left(\overline{\xi}_{t;t}^\top y(t)- \int_0^ty(s)^\top d\overline{\xi}_{s;t}\right) \right].
     \end{split}     
  \end{align*}  

\end{appendix}

\section*{Acknowledgement}
The author is deeply grateful to N. Yoshida for his valuable advice and insightful discussions.

\section*{Funding}
This work was supported by Japan Science and Technology Agency CREST JP-MJCR2115, JSPS KAKENHI Grant Number JP24KJ0667, and RIKEN Special Postdoctoral Researcher Program.

\bibliographystyle{abbrvnat}

\bibliography{arxiv2}

\end{document}